\newtheorem{theorem}{Theorem}[section]
\newenvironment{customthm}[1]
  {\innercustomthm}
  {\endinnercustomthm}
\newtheorem{proposition}[theorem]{Proposition}
\newtheorem{proposition-definition}[theorem]
{Proposition-Definition}
\newtheorem{corollary}[theorem]{Corollary}
\newtheorem{lemma}[theorem]{Lemma}
\theoremstyle{definition}
\newtheorem{definition}[theorem]{Definition}
\newtheorem{example}[theorem]{Example}
\newtheorem{remark}[theorem]{Remark}
\theoremstyle{remark}
\newcommand{\Spec}{\mathrm{Spec}\,}
\newcommand{\Sch}{\mathbf{Sch}}
\newcommand{\Spc}{{\cS\!\operatorname{pc}}}
\newcommand{\bN}{\mathbb{N}}
\newcommand{\bQ}{\mathbb{Q}}
\newcommand{\bZ}{\mathbb{Z}}
\newcommand{\cC}{\mathcal{C}}
\newcommand{\cE}{\mathcal{E}}
\newcommand{\cF}{\mathcal{F}}
\newcommand{\cG}{\mathcal{G}}
\newcommand{\cO}{\mathcal{O}}
\newcommand{\cS}{\mathcal{S}}
\newcommand{\cU}{\mathcal{U}}
\newcommand{\cV}{\mathcal{V}}
\newcommand{\cX}{\mathcal{X}}
\newcommand{\cY}{\mathcal{Y}}
\newcommand{\cZ}{\mathcal{Z}}
\newcommand{\sC}{\mathscr{C}}
\newcommand{\sD}{\mathscr{D}}
\newcommand{\Aff}{\mathbf{Aff}}
\newcommand{\TopC}{\Top_\mathbb{C}}
\newcommand{\Topi}{\mathfrak{Top}_\i}
\newcommand{\Sh}{\operatorname{Sh}}
\newcommand{\Div}{\mathrm{Div}}
\newcommand{\gp}{\mathrm{gp}}
\newcommand{\an}{{an}}
\newcommand{\et}{\acute{e}t}
\def\Top{\mathbf{Top}}
\def\Sch{\mathbf{Sch}}
\renewcommand{\i}{\infty}
\def\Pro{\operatorname{Pro}}
\def\Fun{\operatorname{Fun}}
\DeclareMathOperator{\Psh}{Psh}
\def\Hshi{\mathbb{H}\mathrm{ypSh}_\i}
\def\colim{\underrightarrow{\mathrm{colim}\vspace{0.5pt}}\mspace{4mu}}
\renewcommand{\lim}{\varprojlim\mspace{3mu}}
\def\blank{\mspace{3mu}\cdot\mspace{3mu}}
\def\Profs{\operatorname{Prof}\left(\Spc\right)}
\def\Prof{\operatorname{Prof}}
\def\Shape{\mathit{Shape}}
\def\Lan{\operatorname{Lan}}
\def\Top{\mathbf{Top}}
\def\Set{\mathit{Set}}
\def\longlongrightarrow{-\!\!\!-\!\!\!-\!\!\!-\!\!\!-\!\!\!-\!\!\!\longrightarrow}
\def\rrrarrow{\hspace{.05cm}\mbox{\,\put(0,-3){$\rightarrow$}\put(0,1){$\rightarrow$}\put(0,5){$\rightarrow$}\hspace{.45cm}}}
\renewcommand{\i}{\infty}
\newcommand\radice[2]{\hspace{-1.5pt}\sqrt[\uproot{2}#1]{#2}}
\newcommand{\Map}{\mathbb{M}\mathrm{ap}}
\newcommand{\Aut}{\mathbf{Aut}}
\newcommand{\triv}{\mathit{triv}}
\newcommand{\pro}{\mathit{pro}}
\def\colim{\underrightarrow{\mathrm{colim}\vspace{0.5pt}}\mspace{4mu}}
\title{On the profinite homotopy type of log schemes}
\author{David Carchedi}
\address{Department of Mathematical Sciences\\
George Mason University\\
4400 University Drive, MS:  3F2\\
Exploratory Hall\\
Fairfax, Virginia  22030\\
USA}
\email{davidcarchedi@gmail.com}
\author{Sarah Scherotzke}
\address{Mathematical Institute of the University of M\"unster, 
Einsteinstrasse 62, 
48149 M\"unster,
Germany}
\email{scherotz@math.uni-muenster.de}
\author{Nicol\`o Sibilla}
\address{School of Mathematics, Statistics and Actuarial Sciences\\ 
University of Kent\\ 
Canterbury, Kent CT2 7NF\\
UK}
\email{N.Sibilla@kent.ac.uk}
\author{Mattia Talpo}
\address{Department of Mathematics\\
University of Pisa\\
Largo Bruno Pontecorvo 5\\
56127 Pisa PI\\
Italy}
\email{mattia.talpo@unipi.it}
\keywords{{\'E}tale homotopy theory, logarithmic geometry}
\subjclass[2010]{MSC Primary: 14F35, 55P60; Secondary: 55U35}
\begin{document}

\maketitle

\begin{abstract}
We complete the program, initiated in \cite{KNIRS}, to compare the many different possible definitions of the underlying homotopy type of a log scheme. We show that, up to profinite completion, they all yield the same result, and thus arrive at an unambiguous definition of the profinite homotopy type of a log scheme. Specifically, in \cite{KNIRS}, we define this to be the profinite \'etale homotopy type of the infinite root stack, and show that, over $\mathbb{C},$  
this agrees up to profinite completion with the Kato-Nakayama space. Other possible candidates are the profinite shape of the  Kummer \'etale site $X_{k\et},$ or of the representable \'etale site of $\radice{\i}{X}.$  Our main result is that all of these notions agree, and moreover the \emph{profinite} \'etale homotopy type of the infinite root stack is not sensitive to whether or not it is viewed as a pro-system in stacks, or as an actual stack (by taking the limit of the pro-system). We furthermore show that in the log regular setting, all these notions also agree with the \'etale homotopy type of the classical locus $X^{\triv}$ (up to  an appropriate completion). We deduce that, over an arbitrary locally Noetherian base, the \'etale homotopy type of $\mathbb{G}_m^N$ agrees with that of $B\boldsymbol{\mu}_\i^N$ up to completion.  
\end{abstract}

\section{Introduction}
The category of logarithmic (log) schemes is an enlargement of the category of schemes. Initially designed for applications to arithmetic geometry, log  geometry has proved to be an invaluable tool in a broad array of mathematical areas, including algebraic and symplectic geometry, mirror symmetry and homotopy theory. 

In this paper we  complete the program we initiated in \cite{KNIRS}, and aimed at charting  the topology of log schemes.  
We obtain comparison results linking all  different models of the underlying homotopy type of a log scheme. Additionally, we relate this to the \'etale homotopy type of the classical locus. 

\subsection{The \'etale homotopy type}
If $X$ is a classical scheme, there are several ways to extract from it topological information. If $X$ is  a complex scheme of finite type, we can consider  its analytification $X^{\an}=X(\mathbb{C})$. Over a general base,  the closest  approximation to the underlying topological space of $X$ is given by the \emph{\'etale homotopy type}  of Artin--Mazur and Friedlander. From a modern perspective, this is  an instance of the general formalism of the \emph{shape} of a topos, which is applied in this case to the small \emph{\'etale topos} of $X$. When these notions overlap, we have comparison results ensuring information flow across these different perspectives. Over $\mathbb{C}$, the comparison between the \'etale homotopy type and the analytification is a generalization of Riemann's existence theorem. These comparison theorems are both computationally and conceptually powerful: they show that there is, at bottom, only one meaningful notion of the underlying topology of a scheme, 
and that there is a variety of different techniques that we can deploy to study it. 

In this article, we  describe the different models for the underlying homotopy type of a log scheme, and prove comparisons results between them. Log geometry is a powerful formalism, but teasing out the underlying geometric information of log schemes is not easy. One reason is that the definition of a log structure combines geometric and combinatorial data,    
 which might keep  track for instance of the combinatorics of the  compactifying divisors, or of the singularities of 
the central fiber of a degeneration.  
To get around this issue, several more classically geometric objects have been designed to capture the geometric properties  of log schemes: this includes the  \emph{Kato--Nakayama space}, a beautiful construction  available over $\mathbb{C}$, that attaches to a log scheme a topological space (which is homeomorphic to a topological manifold with corners, under log regular assumptions); the \emph{Kummer \'etale topos}, which is the category of sheaves on a log analogue of the small \'etale site; and the \emph{infinite root stack}, due to Talpo and Vistoli \cite{TV}, which is a limit of  tame algebraic stacks.

It turns out that, at a topological level, all these different incarnations of the geometry of a log scheme yield equivalent objects. This is our main theorem. As in the classical case, our comparison result does not have purely a conceptual import: it is also a valuable computational tool. In the next section we give a more detailed account of our main result. 
Then in  section \ref{applications}  we will explain applications of our work, some of which will appear in a future companion article.  

\subsection{The main result}
Recall that if $\mathcal{E}$ is a $\infty$-topos, the \emph{shape} of $\mathcal{E},$ denoted by
$\Shape(\mathcal{E})$, is, in a precise sense, the best approximation of $\mathcal{E}$ by a pro-space. 
We mentioned already that  
  the \'etale homotopy type is a special case of it.   
As we will explain below, the notion of shape is also 
 one of the key tools for defining the underlying homotopy type of log schemes. 

Many of our proofs will depend on delicate descent arguments. Thus it will be essential for us to work with a sufficiently flexible formalism, 
 capable of keeping track of  
all the higher homotopies involved in descent statements.  
Therefore, it will be indispensable for us to work within the framework of $\i$-categories. Moreover, to deal with higher categorical descent statements, we will need the framework of $\i$-topoi. 
We will study sheaves of spaces on various sites: contrary to classical references we will work with \emph{$\infty$-topoi of sheaves of spaces, or $\infty$-groupoids}, rather than just with sheaves of sets.  
We remark that in the case of a scheme, the shape of the small \'etale $\infty$-topos  
is the modern formulation of the \'etale homotopy type.  

In order to give a more precise account of our work, let us start by  introducing in some more detail the cast of characters which will play a role in our comparison result. 
The \emph{Kummer \'etale site} of a log scheme $X$  is the analogue in the log setting of the classical \'etale site. In the divisorial case, Kummer \'etale maps are maps which 
restrict to ordinary \'etale maps away from the divisor, but can be tamely ramified along it.  
This definition is well aligned  with the general logarithmic  philosophy, 
 which roughly gives us a way to  regard  objects that develop singular behavior along the log   locus as being still smooth.   
We denote by $\Sh\left(X_{k\et}\right)$  the Kummer \'etale topos of $X$. Since the shape of the \'etale topos of a classical scheme is precisely its \'etale homotopy type, the shape of $\Sh\left(X_{k\et}\right)$ is a natural candidate for the notion of the underlying homotopy type of a log scheme $X$. 

The \emph{infinite root stack} of $X$ is an inverse limit of the root construction along the support of the log structure.  It was introduced by Talpo--Vistoli in \cite{TV}.  It can be regarded both as a stack, or as a pro-object in stacks, by viewing it as a formal limit of finite root stacks. We are ultimately interested in shapes, 
and the two point of views on the infinite root stacks will yield genuinely different answers in general. One of the results in this paper however shows the difference however is, in a precise sense, mild, as it is erased after profinite completion. There is a third possible way of extracting an underlying homotopy type from a log scheme which
emerges naturally from  work of Talpo and Vistoli, namely the shape of the topos of sheaves on the representable \'etale site of the infinite root stack. 

Finally over $\mathbb{C}$, Kato and Nakayama gave a recipe to attach to a log scheme 
$X$ an actual topological space $X^{log}$, called the Kato--Nakayama (KN) space. In the divisorial case, the KN space is homeomorphic to the bordification of the complement of the divisor; equivalently, it is homeomorphic to the real oriented blow-up of $X$ along the divisor. Over $\mathbb{C}$, the KN space provides another possible definition of the underlying homotopy type of a log scheme.

Summarizing the above discussion,  we obtain five distinct possible definitions of the underlying homotopy type of a log schemes. We list them below: 
 
\begin{enumerate}
 \item  The shape of the $\i$-topos $\Sh\left(X_{k\et}\right).$
 \item  The \'etale homotopy type of the $\i$-root stack $\radice{\i}{X}.$
 \item  The \'etale homotopy type of the $\i$-root stack $\radice{\i}{X}^{pro}$ regarded as a pro-object. 
 \item The shape of the $\i$-topos of \'etale representable maps to $\radice{\i}{X}$, $\Sh\left(\radice{\i}{X}^{rep}_{\et}\right).$
  \item  Over $\mathbb{C}$, the  homotopy type of the Kato-Nakayama space of $X.$
\end{enumerate}

Our main result completes the program initiated in \cite{KNIRS} by comparing  these different homotopy types. We state the general comparison result as the following statement.  

\begin{customthm}{A}[Theorem \ref{thm:prothesame}, Theorem \ref{protruncated}]
\label{theoremA}
Let $X$ be a log scheme. 
\begin{enumerate}
\item 
The homotopy types (1) , (2)  , (3)  and (4) are all equivalent \emph{up to profinite completion}. 
\item Over $\mathbb{C}$, they are also all equivalent to (5) up to profinite completion. \end{enumerate}
\end{customthm}

Theorem \ref{theoremA} combines our results in this article, with previous results obtained by us and our collaborators. Namely, the comparison between (1) and (4)  follows from work of Talpo--Vistoli, and holds before passing to profinite completions; over $\mathbb{C}$ the comparison between (5) and (3) was the main result of our previous work \cite{KNIRS}. 

\begin{remark}
It is worth mentioning that the comparison between (4) and (2) is \emph{NOT} a tautology since $\radice{\i}{X}$ is not Deligne-Mumford--- it is not even algebraic! The proof is in fact quite involved.
\end{remark}

The heavy lifting done in this paper is in establishing the comparison between (2), (3) and (4). This is far from obvious,  and requires grappling with delicate technical issues. The necessary 
arguments   make up the bulk of the present paper. Section \ref{bettidescent} contains some preliminary results on descent for Betti stacks. Namely, we prove that for $V$ any $\pi$-finite space, its \'etale Betti stack $\Delta^{\et}\left(V\right)$ satisfies fpqc-descent. Leveraging this, in Section  
\ref{shapecomparison1} we prove 
that (2) and (3) are equivalent after profinite completion;  the comparison between (3) and (4) is established in section 
\ref{shapecomparison2}. We stress that, before completion, the constructions (2), (3) and (4) yield genuinely different pro-spaces. 
 
 \subsection{The homotopy type of the classical locus} If $X$ is a log scheme, its \emph{classical locus} is the largest open subscheme where the log structure is trivial.  We denote it by  $X^{\triv}$.   
 Log structures appear naturally when working with compactifications.  If $X \supset U$ is a sufficiently well-behaved compactification of an open variety $U$, log geometry allows us to recover information on  $U$ by working relative to the compactifying divisor: $X$ is equipped with a natural log structure which keeps track of the divisor at infinity, and  has the property that  
$X^{\triv}=U$.  
A central organizing principle in the area is that, under suitable assumptions,  log invariants of $X$ should coincide with the corresponding classical invariants of $U$. As an early instance of this circle of ideas, we should mention Grothendieck's theorem stating that the cohomology of   differential forms with log poles along a divisor computes the de Rham cohomology of the complement of the divisor. 

In this paper we focus on what is, in a precise sense, 
the most fundamental topological invariant of a log scheme $X$: namely, its  (profinite)  homotopy type. By Theorem \ref{theoremA} this is a well-defined notion, as up to profinite completion all different constructions converge to yield the same answer. 
To differentiate it from the classical \'etale homotopy type, we will refer to it  
as the (profinite)  \emph{log homotopy type} of $X$. 
We state our second main result below. We assume that 
$X$ is \emph{log regular}, which is a generalization of regularity for log schemes: the result cannot be expected to hold under weaker hypotheses.\footnote{Here and elsewhere we also make   further standard mild assumptions on $X$, we refer the reader to the main text for a complete account of this. We  remark however that we do not work over a field, but over a  general base scheme}. We remark that our theorem builds on recent results contained in Berner's thesis \cite{Berner}. 

\begin{customthm}{B}[Corollary \ref{cor:Berner}]
\label{theoremB}
Let $X$ be a log regular log scheme. 
\begin{enumerate}
\item Let $\ell$ be a prime which is invertible on $X$. Then the $\ell$-profinite completion of  the log homotopy type of $X$ is equivalent to the $\ell$-profinite completion of the \'etale homotopy type of $X^{\triv}$. 
\item In characteristic zero, the profinite completion of the log homotopy type of $X$ is equivalent to the profinite completion of the  \'etale homotopy type of $X^{\triv}$. 
\end{enumerate}
\end{customthm}

Leveraging all of the comparison results, we are able to prove the following theorem  comparing the \'etale homotopy type of $\mathbb{G}_m$ with that of $B\boldsymbol{\mu}_\i.$

\begin{theorem}
Let $S$ be a locally Noetherian scheme, and denote by $\boldsymbol{\mu}_\i$ the affine group scheme over $S$ $$\underset{n} \lim \boldsymbol{\mu}_n.$$ Let $\ell$ be a prime invertible on $S,$ and $N$ any non-negative integer. Then there is an equivalence of $\ell$-profinite spaces
$$\Pi^{\et}_\i\left(\mathbb{G}_m^N\right)^{\wedge}_{\ell} \simeq  \Pi^{\et}_\i\left(B\boldsymbol{\mu}_{\i}^N\right)^{\wedge}_{\ell}.$$ Moreover, in characteristic zero, this holds up to profinite completion.
\end{theorem}
   
\subsection{Applications and future work}
\label{applications}
The \'etale homotopy type  is a very rich invariant of  schemes. Both the \'etale fundamental group, and the \'etale cohomology of locally constant sheaves, can be computed from the \'etale homotopy type. In fact, under suitable assumptions,  all \emph{topological} invariants of a scheme should be encoded, up to completion, in its \'etale homotopy type. 
As a prominent example of this kind of thinking, let us mention the  beautiful work of Friedlander which recasts  \'etale K-theory  in terms of the  topological K-theory  of the \'etale homotopy type \cite{friedlander1980etalek}.


One of the  motivations behind the present project is to transfer this perspective to the logarithmic setting. In order to do so, it is necessary first of all  to gain a finer understanding of the underlying homotopy type of a log scheme. In this article we accomplish this first step via our Theorem \ref{theoremA} and \ref{theoremB}. 
In a companion article, we hope to pursue applications of  our results to the logarithmic  version of \'etale K-theory, which was introduced by Nizio{\l} in \cite{Ni1}. 
\subsection*{Acknowledgments}
We are grateful for discussions with Alex Betts, Bhargav Bhatt, Elden Elmanto, Marc Hoyois, Jacob Lurie, Tomer Schlank, and Bertrand To\"{e}n. We would also like to thank the Max Planck Institute for Mathematics (MPIM), and the math department of George Mason University for helping accommodate this collaboration. D.C. would also like to thank the MPIM for support during his sabbatical. S.S. was supported by the National Science Foundation under Grant No. 1440140, while the author was in residence at the Mathematical Sciences Research Institute (MSRI) in Berkeley, California. N.S and S.S thank the MSRI, Berkeley, for excellent working conditions. M.T. was partly supported by EPSRC grant EP/R013349/1.

\section{Preliminaries}
\subsection{The $\i$-category of pro-spaces and its localizations}\label{subsec:proobj} 
We will work within the framework of $\i$-categories (aka quasicategories) and follow the notational conventions and terminology of \cite{htt}. We will also assume the reader is familiar with the the salient points of the theory of pro-objects in $\i$-categories. For more details on this theory see e.g. \cite[Appendix E]{SAG}, \cite[Section 3]{dagxiii}, \cite{prohomotopy} and \cite[Section 2.1]{reletale}.

Given an $\i$-category $\sC,$ the $\i$-category $\Pro\left(\sC\right)$ of pro-objects is defined by a universal property, making rigorous the idea that objects of $\Pro\left(\sC\right)$ are \emph{formal cofiltered limits} of objects in $\sC.$

\begin{definition} \label{def:pro} The $\infty$-category $\Pro\left(\sC\right)$ has cofiltered limits, is equipped with a fully faithful functor 
\begin{equation} \label{eq:pro-yoneda}
j: \sC \rightarrow \Pro\left(\sC\right)
\end{equation} 
and if $\sD$ is an $\infty$-category admitting small cofiltered limits, then the precomposition with $j$ induces an equivalence of $\infty$-categories:

$$\Fun_{\mathit{cofilt}}\left(\Pro\left(\sC\right), \sD\right) \rightarrow \Fun\left(\sC, \sD\right)$$
where $\Fun_{\mathit{cofilt}}\left(\Pro\left(\sC\right), \sD\right)$ is the full subcategory of $\Fun\left(\Pro\left(\sC\right), \sD\right)$ on those functors that preserve small cofiltered limits.
\end{definition}

We will make continual use of the following two propositions:

\begin{proposition} \label{prop:lex} \cite[Proposition 3.1.6]{dagxiii} Suppose that $\sC$ is accessible and has finite limits, then $\Pro\left(\sC\right)$ is equivalent to the full subcategory of $\Fun\left(\sC, \Spc\right)^{op}$ on those functors which are left exact and accessible.
\end{proposition}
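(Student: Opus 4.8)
The plan is to verify directly that the full subcategory $\sD \subseteq \Fun\left(\sC, \Spc\right)^{op}$ of left exact accessible functors, together with the corestriction of the co-Yoneda embedding $y \colon \sC \to \Fun\left(\sC,\Spc\right)^{op}$, $c \mapsto \Map_\sC(c,-)$, satisfies the universal property of Definition \ref{def:pro}; by the essential uniqueness of that universal property this identifies $\sD$ with $\Pro\left(\sC\right)$. First I would check that $y$ lands in $\sD$: a corepresentable functor preserves all limits, hence in particular is left exact, and since $\sC$ is accessible every object $c$ is $\kappa$-compact for a sufficiently large regular cardinal $\kappa$, so $\Map_\sC(c,-)$ preserves $\kappa$-filtered colimits and is therefore accessible. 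Full faithfulness of $y$ is inherited from that of the unrestricted co-Yoneda embedding.

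Next I would show that $\sD$ admits small cofiltered limits and that these are computed in $\Fun\left(\sC,\Spc\right)^{op}$, i.e. as small filtered colimits in $\Fun\left(\sC,\Spc\right)$. Both defining conditions are preserved: filtered colimits commute with finite limits in $\Spc$, so a filtered colimit of left exact functors is again left exact; and a small filtered colimit of accessible functors is accessible, once one chooses a single regular cardinal dominating both the accessibility cardinals of the terms and the size of the indexing diagram. This establishes closure of $\sD$ under small cofiltered limits and, with the previous paragraph, gives the fully faithful embedding $y$ into a category with the required limits.

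The heart of the argument is to show that $y$ exhibits $\sD$ as the free completion of $\sC$ under small cofiltered limits. Two points are needed. The formal one is \emph{cocompactness}: for $c \in \sC$ and $F \in \sD$ the Yoneda lemma gives $\Map_{\sD}(F, y(c)) \simeq \Map_{\Fun(\sC,\Spc)}(\Map_\sC(c,-), F) \simeq F(c)$, and since cofiltered limits in $\sD$ are computed pointwise, evaluation at $c$ carries them to filtered colimits in $\Spc$; thus each $y(c)$ is cocompact with respect to small cofiltered limits. The substantive one is \emph{generation}: every left exact accessible $F$ should be a small cofiltered limit of corepresentables. By the (co-)Yoneda density theorem, $F$ is the colimit of the corepresentables $\Map_\sC(c,-)$ indexed by its category of elements, equivalently a cofiltered limit of corepresentables in $\sD$; left exactness of $F$ — which, viewing $F$ as a presheaf on the finitely cocomplete category $\sC^{op}$, is exactly the $\Ind$/$\Pro$ condition — guarantees that this indexing category is filtered (so the corresponding limit in $\sD$ is cofiltered), while accessibility of $F$ lets the large diagram of elements be replaced by a cofinal \emph{small} filtered subdiagram, namely the elements lying over $\kappa$-compact objects for $\kappa$ an accessibility cardinal of $F$. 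Granting generation and cocompactness, the universal property follows by the standard argument for free (co)completions: any $f \colon \sC \to \sE$ into a category with small cofiltered limits extends along $y$ by the formula $\widehat{f}(F) = \lim f(c)$, the cofiltered limit over the (opposite) small diagram of elements of $F$; cocompactness and generation make this extension well defined and cofiltered-limit preserving, and density forces its uniqueness.

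The main obstacle I anticipate is precisely the generation step, and within it the passage from the small to the large accessible setting. For $\sC$ small and finitely complete, the identification of $\Pro\left(\sC\right)$ with left exact functors is the dual of \cite[Proposition 5.3.5.4]{htt} and needs no accessibility hypothesis; the difficulty here is that the category of elements of $F$ is large, so one must use accessibility of $F$ to extract a cofinal small filtered subdiagram and simultaneously check that left exactness survives this truncation. The bookkeeping is complicated by the fact that, in a large accessible $\sC$, the $\kappa$-compact objects need not be closed under finite limits, so one cannot simply restrict $F$ to a finitely complete small subcategory; the correct statement is that $F$ is $\kappa$-accessible, hence left Kan extended from the $\kappa$-compact objects, and one must verify cofinality of the small diagram without invoking such closure. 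This is the technical core that the cited \cite[Proposition 3.1.6]{dagxiii} isolates.
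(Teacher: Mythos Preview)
The paper does not prove this proposition at all: it is stated with a citation to \cite[Proposition 3.1.6]{dagxiii} and no proof is given in the body of the paper. There is therefore no ``paper's own proof'' to compare against; the authors simply quote Lurie's result.

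Your proposal is a reasonable sketch of how one would establish the statement from scratch, and it is broadly aligned with Lurie's own argument in the cited reference: verify that corepresentables are left exact and accessible, that the subcategory $\sD$ is closed under small cofiltered limits (computed as pointwise filtered colimits), that corepresentables are cocompact in $\sD$, and that every $F \in \sD$ is a small cofiltered limit of corepresentables. You have correctly isolated the one nontrivial step, namely generation in the large accessible setting: the category of elements of $F$ is filtered by left exactness but is a priori large, and one must use accessibility of $F$ together with accessibility of $\sC$ to extract a small cofinal filtered subdiagram. Your remark that the $\kappa$-compact objects of $\sC$ need not be closed under finite limits is apt and is precisely the wrinkle that makes this step more than a formality; Lurie handles it by enlarging $\kappa$ appropriately and invoking the machinery of accessible $\infty$-categories developed in \cite[\S 5.4]{htt}. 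As a self-contained proof your outline would need that argument filled in, but as a roadmap it is accurate.
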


\begin{proposition} \label{prop:ladj}  Let $f: \sD \rightarrow \sC$ be a functor between accessible $\i$-categories with finite limits, and suppose that that $f$ is left exact, then the functor $$\Pro\left(f\right): \Pro\left(\sD\right) \rightarrow \Pro\left(\sC\right)$$ has a left adjoint $$f^*: \Pro\left(\sC\right) \rightarrow \Pro\left(\sD\right),$$ given by restriction along $f.$
\end{proposition}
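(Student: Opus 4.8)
The plan is to verify the claimed universal property of $\Pro(f)$ directly, using the description of pro-objects from Proposition~\ref{prop:lex}. Under that proposition, $\Pro(\sC)$ is the full subcategory of $\Fun(\sC,\Spc)^{op}$ on left exact accessible functors, and similarly for $\Pro(\sD)$. With respect to these identifications, the functor $\Pro(f)\colon \Pro(\sD)\to\Pro(\sC)$ is precisely (the opposite of) \emph{left Kan extension} along $f$ — concretely, on a left exact accessible $F\colon \sD\to\Spc$ it produces the functor classifying the pro-object $\{f(d)\}$ indexed the same way, which one checks corresponds to $\Lan_f F$ being left exact because $f$ is left exact. The candidate left adjoint $f^*$ should then be \emph{restriction along $f$}: given a left exact accessible $G\colon\sC\to\Spc$, set $f^*(G) \defeq G\circ f$.

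First I would check that $f^*$ is well-defined, i.e.\ that $G\circ f$ is again left exact and accessible whenever $G$ is: left exactness of $G\circ f$ is immediate since $f$ is left exact by hypothesis and $G$ is left exact, and accessibility of $G\circ f$ follows from accessibility of both $f$ and $G$ together with the fact that $\sD$ and $\sC$ are accessible (a composite of accessible functors between accessible $\infty$-categories is accessible, by \cite{htt}). Next I would establish the adjunction. Since, after passing to opposite categories, $\Pro(f)$ is left Kan extension $\Lan_f$ viewed as a functor $\Fun(\sD,\Spc)\to\Fun(\sC,\Spc)$ and $f^*$ is restriction $\Fun(\sC,\Spc)\to\Fun(\sD,\Spc)$, the pair $(\Lan_f, \mathrm{res}_f)$ is an adjunction on the full presheaf categories by the universal property of left Kan extensions. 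The remaining point is that this adjunction restricts to the full subcategories of left exact accessible functors: we have just checked $\mathrm{res}_f$ preserves this subcategory, and $\Lan_f$ preserves it precisely because $f$ is left exact (so that filtered-colimit-preserving left Kan extension along a left exact functor of left exact presheaves is left exact — this is the standard fact underlying the very existence of $\Pro(f)$ as a functor $\Pro(\sD)\to\Pro(\sC)$). A restriction of an adjunction to full subcategories preserved by both functors is again an adjunction, so we conclude; reversing the opposite-category convention turns the left adjoint $\Lan_f$ into the \emph{right} adjoint $\Pro(f)$ and $\mathrm{res}_f$ into its \emph{left} adjoint $f^*$, as claimed.

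Alternatively, and perhaps more cleanly, I would phrase the argument via Definition~\ref{def:pro}: one wants a functor $f^*\colon\Pro(\sC)\to\Pro(\sD)$ together with a unit or counit exhibiting the adjunction. Using that $\Pro(\sD)$ has all small cofiltered limits and $j_\sD\colon\sD\to\Pro(\sD)$ is the universal cofiltered-limit completion, the composite $j_\sD\circ f\colon \sC\to\Pro(\sD)$ — wait, this does not land in a category with the right universal property directly; so I would instead note that $\Pro(f) = \Pro(\mathrm{id})$-functoriality of $\Pro(-)$ applied to $f$, and the left adjoint is constructed levelwise: on a pro-object $\{c_i\}_{i\in I}$ of $\sC$ one may \emph{not} simply apply $f$ (wrong direction), which is exactly why the $\Fun(-,\Spc)^{op}$ model of Proposition~\ref{prop:lex} is the efficient route. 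Hence I would commit to the first approach above.

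The main obstacle I anticipate is the bookkeeping around opposite categories and the direction of adjunctions: it is easy to get confused about whether $\Pro(f)$ corresponds to $\Lan_f$ or to restriction, and correspondingly whether $f^*$ is a left or right adjoint, since the equivalence of Proposition~\ref{prop:lex} involves an ${}^{op}$. The safe way to pin this down is to trace through what each functor does on representable pro-objects $j(d)$, $j(c)$: one has $\Pro(f)(j(d)) = j(f(d))$ by functoriality of $j$, and under Proposition~\ref{prop:lex} the representable $j(d)$ corresponds to the corepresentable functor $\Map_\sD(d,-)$, so $\Pro(f)$ sends $\Map_\sD(d,-)$ to $\Map_\sC(f(d),-)$, which is exactly what $\Lan_f^{op}$ (equivalently, what restriction does on the $\Fun(-,\Spc)$ side before taking ${}^{op}$) does — confirming that $f^*$ is given by $G\mapsto G\circ f$ and is a genuine left adjoint. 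Beyond this, everything is a routine application of standard facts about accessible functors, left Kan extensions, and restriction of adjunctions to reflective/coreflective-compatible full subcategories; no substantive new input is needed.
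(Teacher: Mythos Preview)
The paper states this proposition without proof, treating it as a standard consequence of the functor-category description of pro-objects in Proposition~\ref{prop:lex} (and implicitly of the references \cite{SAG,dagxiii,prohomotopy,reletale} cited at the start of that subsection). So there is no proof in the paper to compare against.

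Your argument is correct in substance and is the natural one: identify $\Pro(\sC)$ and $\Pro(\sD)$ with the opposite of left exact accessible functors to $\Spc$, recognize $\Pro(f)$ as (the opposite of) $\Lan_f$, and use the standard adjunction $\Lan_f \dashv \mathrm{res}_f$ restricted to these full subcategories, which flips to $\mathrm{res}_f \dashv \Lan_f$ after taking opposites. Your check that restriction along $f$ preserves left exactness and accessibility is exactly where the hypothesis that $f$ is left exact (and accessible) enters.

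One small point of attribution: you write that ``$\Lan_f$ preserves [left exact accessible] precisely because $f$ is left exact,'' but this is not quite where the hypothesis is used. The identity $\Lan_f\bigl(\Map_\sD(d,-)\bigr)\simeq \Map_\sC(f(d),-)$ holds for \emph{any} functor $f$ (it follows formally from the adjunction $\Lan_f\dashv \mathrm{res}_f$ and co-Yoneda), and since every left exact accessible functor on $\sD$ is a filtered colimit of corepresentables, $\Lan_f$ preserves this class regardless. The left-exactness of $f$ is needed only to ensure that $G\circ f$ is left exact, i.e.\ that your candidate left adjoint $f^*$ lands in $\Pro(\sD)$. This does not affect the validity of your proof, only the explanation of why the hypothesis is there. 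The self-correcting digressions in your write-up could be trimmed, but the mathematics stands.
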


\begin{definition}
The $\i$-category of \textbf{pro-spaces} is the $\i$-category $\Pro\left(\Spc \right).$ 
\end{definition}

Note that, given any subcategory $\sC \subseteq \Spc$ closed under finite limits and retracts, it follows from 
Proposition \ref{prop:ladj} that the canonical inclusion $$\Pro\left(\sC\right) \hookrightarrow  \Pro\left(\Spc\right)$$ has a left adjoint $\widehat{\left(\blank\right)}_{\sC}.$

\begin{definition}
A space $X$ is called \textbf{$\pi$-finite} if it has finitely many connected components, and finitely many non-trivial homotopy groups, all of which are finite. The $\i$-category $\Pro\left(\Spc^\pi \right)$ is the $\i$-category of \textbf{profinite spaces}, and is denoted by $\Profs.$ We denote the left adjoint $\widehat{\left(\blank\right)}_{\Spc^\pi}$ simply by $\widehat{\left(\blank\right)}$ and call it the \textbf{profinite completion} functor.
\end{definition}

\begin{definition}
Fix a prime $\ell.$ A space $X$ is called \textbf{$\ell$-finite} if it is $\pi$-finite and all its homotopy groups are finite $\ell$-groups. The $\i$-category $\Pro\left(\Spc^{\pi-\ell} \right)$ is the $\i$-category of \textbf{$\ell$-profinite spaces}, and is denoted by $\Prof_{\ell}\left(\Spc\right).$ We denote the left adjoint $\widehat{\left(\blank\right)}_{\Spc^{\pi-\ell}}$  by $\left(\blank\right)^{\wedge}_{\ell}$ and call it the \textbf{$\ell$-profinite completion} functor.
\end{definition}

\begin{proposition}\label{prop:ellsame}
Let $f:\cX \to \cY$ be a map in $\Pro\left(\Spc\right),$ then $f$ induces an equivalences
$$\cX^{\wedge}_{\ell} \to \cY^{\wedge}_{\ell}$$ if and only if for all $n,$
$$\cY\left(K\left(\mathbb{Z}/\ell,n\right)\right) \to \cX\left(K\left(\mathbb{Z}/\ell,n\right)\right)$$ is an equivalence.
\end{proposition}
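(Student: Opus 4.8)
The plan is to characterize $\ell$-profinite completion by a universal mapping-in property and then observe that both sides of the claimed equivalence are detected on the same test objects. Recall that $\Prof_\ell(\Spc) = \Pro(\Spc^{\pi-\ell})$ sits inside $\Pro(\Spc)$ as a reflective subcategory, with reflector $(\blank)^\wedge_\ell$. Thus $f \colon \cX \to \cY$ induces an equivalence $\cX^\wedge_\ell \to \cY^\wedge_\ell$ if and only if for every $\ell$-finite space $Z$ the induced map
\[
\Map_{\Pro(\Spc)}(\cY, Z) \to \Map_{\Pro(\Spc)}(\cX, Z)
\]
is an equivalence, since the $\ell$-profinite completion is determined by corepresenting this functor on $\Spc^{\pi-\ell}$. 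When $\cX$ and $\cY$ are pro-spaces, $\Map_{\Pro(\Spc)}(\cX, Z) \simeq \colim \cX(Z)$ along the filtered diagram defining $\cX$; more conceptually, using Proposition \ref{prop:lex} every pro-space is a left-exact accessible functor $\Spc \to \Spc$, and evaluation at $Z$ computes the mapping space, so the statement to prove is: $f$ induces an equivalence on $\Map(\blank, Z)$ for all $\ell$-finite $Z$ if and only if it does so for $Z = K(\mathbb{Z}/\ell, n)$, all $n$.

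First I would reduce to connected test objects and then to nilpotent ones: every $\ell$-finite space is a finite disjoint union of connected $\ell$-finite spaces, and a connected $\ell$-finite space has $\pi_1$ a finite $\ell$-group acting on finite-$\ell$ higher homotopy groups, hence is a nilpotent space (a finite $\ell$-group is nilpotent, and its action on $\ell$-groups is nilpotent as well). So every connected $\ell$-finite space admits a finite principal Postnikov-type tower whose successive fibers are Eilenberg--MacLane spaces $K(\mathbb{Z}/\ell, m)$: one refines the Postnikov tower of a nilpotent space to a tower with layers $K(A, m)$ for $A$ finite abelian $\ell$-groups, and then further refines each such layer, using a composition series of $A$, into finitely many stages with layers $K(\mathbb{Z}/\ell, m)$. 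Each stage is a pullback square
\[
\xymatrix{
Z_{i} \ar[r] \ar[d] & \ast \ar[d] \\
Z_{i-1} \ar[r] & K(\mathbb{Z}/\ell, m+1)
}
\]
Applying the left-exact functors $\cX$ and $\cY$ to such squares (left-exactness from Proposition \ref{prop:lex}) and using that $\Map(\blank, \ast)$ is contractible, one shows by induction up the tower that if $f$ induces equivalences on $\Map(\blank, K(\mathbb{Z}/\ell, n))$ for all $n$, then it does so on $\Map(\blank, Z_i)$ for every stage, hence on the connected $\ell$-finite space at the top; a final finite-coproduct argument handles general $\ell$-finite $Z$. The converse direction is immediate since each $K(\mathbb{Z}/\ell, n)$ is itself $\ell$-finite.

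The main obstacle is the bookkeeping around the Postnikov/principal-fibration tower: one must be careful that a connected $\ell$-finite space is genuinely nilpotent (so that its Postnikov tower can be arranged as a tower of principal fibrations), and that the refinement by a composition series of the finite abelian $\ell$-groups in the layers produces a \emph{finite} tower with all layers of the form $K(\mathbb{Z}/\ell, m)$ — this is where the hypothesis that all homotopy groups are finite $\ell$-groups (not merely $\ell$-torsion) is used. Once the tower is in place, the inductive step is a formal consequence of left-exactness of pro-spaces viewed as functors $\Spc \to \Spc$ (Proposition \ref{prop:lex}) together with the fact that $(\blank)^\wedge_\ell$ is the reflection onto the subcategory corepresented by $\Spc^{\pi-\ell}$, so that an equivalence after $(\blank)^\wedge_\ell$ is exactly an equivalence on all mapping spaces into $\ell$-finite targets.
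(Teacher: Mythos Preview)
The paper's proof is a one-line citation to \cite[Corollary 7.3.7]{prohomotopy}; you have instead given a self-contained argument, which is essentially the content of that cited result. Your strategy---reduce to connected $\ell$-finite test objects, observe these are nilpotent (finite $\ell$-groups are nilpotent and act nilpotently on finite abelian $\ell$-groups), refine the Postnikov tower to one with $K(\mathbb{Z}/\ell,m)$ layers, and climb the tower using that pro-spaces are left-exact functors---is correct and is exactly how one proves the cited corollary. So your approach is not so much \emph{different} from the paper's as it is an unpacking of what the paper defers to a reference; the benefit is that your write-up is self-contained.

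One point does need more care. The phrase ``a final finite-coproduct argument handles general $\ell$-finite $Z$'' is not immediate: left-exact accessible functors $\Spc\to\Spc$ need not preserve finite coproducts, so the class of $Z$ for which $\cY(Z)\to\cX(Z)$ is an equivalence is not \emph{a priori} closed under $\coprod$. What you do know is that this class is closed under finite limits and retracts. A clean fix is then: finite sets lie in the class, since any finite set is a retract of a power of $K(\mathbb{Z}/\ell,0)$ (and $K(\mathbb{Z}/\ell,0)\simeq *\times_{K(\mathbb{Z}/\ell,1)}*$ if you want to start from $n\ge 1$); and for nonempty $Z_1,Z_2$ in the class, $Z_1\coprod Z_2$ is a retract of $Z_1\times Z_2\times\{0,1\}$, with the section built from a choice of basepoints. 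That closes the gap and completes your argument.
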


\begin{proof}
This follows immediately from \cite[Corollary 7.3.7]{prohomotopy}.
\end{proof}

\begin{definition}
Denote by $\Spc_{< \i}$ the full subcategory of $\Spc$ on those spaces which have only finitely many non-trivial homotopy groups. The $\i$-category $\Pro\left(\Spc_{< \i}\right)$ is called the $\i$-category of \textbf{protruncated spaces}. We denote this $\i$-category by $\Pro_{< \i}\left(\Spc\right)$ and the associated localization functor by $\left(\blank\right)^{\wedge}_{< \i}.$
\end{definition}

\subsection{Shape theory of $\i$-topoi}
Recall that an \textbf{$\i$-topos} is an $\i$-category $\cE$ which arises as a left exact localization of an $\i$-category of presheaves of spaces $\Psh\left(\sC\right)$ on a small $\i$-category $\sC.$ In other words there is a fully faithful inclusion $$i:\cE \hookrightarrow \Psh\left(\sC\right)$$ which has a left adjoint $a,$ and this left adjoint moreover is left exact, i.e. preserves finite limits. The prototypical examples arise by equipping $\sC$ with a Grothendieck topology, and letting $\cE$ be the full subcategory on those presheaves that satisfy descent with respect to \v{C}ech-covers, or hyperdescent.

A \textbf{geometric morphism} $f:\cE \to \cF$ between $\i$-topoi is a pair of adjoint functors $$f^* \dashv f_*,$$ such that $f^*$ is left exact. These are the $1$-morphisms in the $\i$-category $\Topi$ of $\i$-topoi.

\begin{example}
The $\i$-topos $\Spc$ of spaces is a terminal object in $\Topi.$ This follows since a colimit preserving functor $f^*:\Spc \to \cE$ with $\cE$ cocomplete is completely determined by where it sends the one-point space, and moreover has a  right adjoint $f_*.$ Furthermore, if $f^*$ is left exact, it must send the one-point space to a terminal object in $\cE,$ since the one-point space is terminal in $\Spc.$ The essentially unique geometric morphism $$\cE \to \Spc$$ is denoted by $\Delta_{\cE} \dashv \Gamma_{\cE}.$ Concretely, $$\Gamma\left(E\right)\simeq \Map_{\cE}\left(1,E\right).$$ If $\cE$ arises as sheaves on a site, $\Delta_{\cE}\left(X\right)$ is the constant sheaf with values $X,$ i.e. the sheafification of the constant presheaf.
\end{example}

Given a space $X \in \Spc,$ there is a canonical equivalence $\Spc/X \simeq \Psh\left(X\right),$ and there is an induced fully faithful functor \cite[Remark 6.3.5.10, Theorem 6.3.5.13, and Proposition 6.3.4.1]{htt}
$$\Spc/\blank:\Spc \hookrightarrow \Topi.$$ By the universal property of $\Pro\left(\Spc\right)$ this extends to a unique cofiltered limit preserving functor
$$\Spc/\blank:\Pro\left(\Spc\right) \to \Topi.$$

\begin{remark}
The above functor is not fully faithful, but its restriction to $\Profs$ is by \cite[Appendix E.2]{SAG}.
\end{remark}

By \cite[Remark 7.1.6.15]{htt}, this functor has a left adjoint $$\Shape:\mathfrak{Top}_\i \to \Pro\left( \Spc \right).$$ In fact, it has a relatively simple description: to describe a pro-space, it suffices to give a left exact accessible functor from $\Spc$ to itself. The functor $\Shape$ sends an $\i$-topos $\cE$ to the functor $$\Spc \stackrel{\Delta_{\cE}}{\longlongrightarrow} \cE \stackrel{\Gamma_{\cE}}{\longlongrightarrow} \Spc.$$

\begin{definition} Let $\cE$ be an $\infty$-topos, then the pro-space $$\Shape\left(\cE\right)$$ called the \textbf{shape} of $\cE.$
\end{definition}

Applying natural completion functors to the shape functor gives natural variants. For example, the profinite completion of $\Shape\left(\cE\right)$ is referred to as the \emph{profinite shape} of $\cE,$ etc.

The shape of an $\i$-topos is, in a precise sense, its best approximation by a pro-space. It serves as a suitable notion of underlying homotopy type of an $\i$-topos. For example, if a \emph{topological} space $T$ has the homotopy type of a CW-complex, it follows from \cite[Remark A.1.4]{higheralgebra} that the shape of its $\i$-topos of sheaves of spaces $\Sh\left(T\right)$ is that of the underlying homotopy type of $T.$ For further geometric intuition about this functor, we refer the reader to \cite[Section 2.2.1]{reletale}.

\subsection{\'Etale homotopy types}
For $X$ any scheme, its \emph{\'etale homotopy type} is the pro-space
$$\Shape\left(\Sh\left(X_{\et}\right)\right)$$
where $\Sh\left(X_{\et}\right)$ is the $\i$-category of sheaves of spaces on the small \'etale site of $X.$

In \cite{etalehomotopy}, the first author extends this definition to arbitrary sheaves of spaces on the large \'etale site. We give a rapid recollection:

Let us take $\Aff'$ to be a suitable \emph{small} subcategory of affine schemes, closed under finite limits and \'etale morphisms, and containing the empty scheme. Then there is a unique colimit preserving functor
$$\Sh\left(\left(\blank\right)_{\et}\right):\Sh\left(\Aff',\et\right) \to \Topi$$ which sends each affine scheme $S$ to sheaves of spaces on its small \'etale site. Concretely, for an arbitrary sheaf of spaces $\cX$ on $\left(\Aff',\et\right),$ the $\i$-topos $\Sh\left(\cX_{\et}\right)$ is the colimit
$$\underset{T \to \cX} \colim \Sh\left(T_{\et}\right),$$ computed in $\Topi$, where the colimit ranges over all maps from affine schemes (in $\Aff'$) into $\cX.$

\begin{definition}
The \'etale homotopy type $\Pi^{\et}_\i\left(\cX\right)$ of $\cX$ is the shape of the $\i$-topos $\Sh\left(\cX_{\et}\right).$
\end{definition}

Since it involves a colimit, the above definition is a bit opaque for stacks which are not schemes (or at least Deligne-Mumford stacks). However, there is a much simpler description given also in \cite{etalehomotopy}, which we now recall.

\begin{definition}
Let $V$ be a space in $\Spc.$ Denote by $\Delta^{\et}\left(V\right)$ its constant stack on $\left(\Aff',\et\right).$ This stack is called the \textbf{Betti stack} associated to $V.$
\end{definition}

\begin{theorem}\cite[Theorem 2.40]{etalehomotopy}
For a sheaf of spaces $\cX$ on $\left(\Aff',\et\right),$ its \'etale homotopy type is given by
\begin{eqnarray*}
 \Pi^{\et}_\i\left(\cX\right):\Spc &\to& \Spc\\
 V &\mapsto& \Map\left(\cX,\Delta^{\et}\left(V\right)\right).
 \end{eqnarray*}
\end{theorem}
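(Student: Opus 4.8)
The plan is to reduce the statement to the comparison of the two definitions on affine schemes, and then propagate it to an arbitrary sheaf $\cX$ by a density argument, exploiting that both sides are expressible as limits over the \emph{same} index category $\Aff'_{/\cX}$ of affines mapping to $\cX$.

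First I would rewrite the value of the pro-space $\Pi^{\et}_\i(\cX)$ at $V$ as a mapping space of $\i$-topoi. Under the identification of $\Pro(\Spc)$ with the opposite of the $\i$-category of left exact accessible functors $\Spc \to \Spc$ (Proposition \ref{prop:lex}), every pro-space $P$ satisfies $P(V) \simeq \Map_{\Pro(\Spc)}(P, j(V))$, where $j(V)$ is the image of $V$ under $j:\Spc \to \Pro(\Spc)$; this is simply the Yoneda lemma in $\Fun(\Spc,\Spc)$, since $j(V)$ corresponds to the functor $\Map_{\Spc}(V,\blank)$. Applying this to $P = \Shape(\Sh(\cX_{\et}))$ and using the adjunction $\Shape \dashv \Spc/\blank$ gives
$$\Pi^{\et}_\i(\cX)(V) \simeq \Map_{\Pro(\Spc)}\!\left(\Shape(\Sh(\cX_{\et})), j(V)\right) \simeq \Map_{\Topi}\!\left(\Sh(\cX_{\et}), \Spc/V\right).$$
On the other hand, by the canonical colimit presentation of objects in a sheaf topos, $\cX \simeq \colim_{T \to \cX} T$ in $\Sh(\Aff',\et)$, whence $\Map(\cX, \Delta^{\et}(V)) \simeq \lim_{T \to \cX} \Map(T, \Delta^{\et}(V)) \simeq \lim_{T \to \cX} \Delta^{\et}(V)(T)$, the last step by the Yoneda lemma for sheaves. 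Since $\Sh(\cX_{\et})$ is by definition $\colim_{T \to \cX}\Sh(T_{\et})$ and $\Map_{\Topi}(\blank, \Spc/V)$ carries colimits to limits, both sides are now limits over the same diagram indexed by $\Aff'_{/\cX}$, and it suffices to treat affine $T = S$, naturally in $S$.

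The affine case is the crux. Here I must show $\Delta^{\et}(V)(S) \simeq \Map_{\Topi}(\Sh(S_{\et}), \Spc/V)$. By definition of the shape functor the right-hand side is $\Shape(\Sh(S_{\et}))(V) = \Gamma_{S_{\et}}\!\left(\Delta_{S_{\et}}(V)\right) = \Map_{\Sh(S_{\et})}\!\left(1, \Delta_{S_{\et}}(V)\right)$, the global sections over the small \'etale site of the constant sheaf with value $V$. So the content is the identity $\Delta^{\et}(V)(S) \simeq \Gamma_{S_{\et}}\!\left(\Delta_{S_{\et}}(V)\right)$, i.e. that the two definitions agree for honest schemes. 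I would prove it by observing that restriction along the inclusion of sites $S_{\et} \hookrightarrow (\Aff',\et)$ is the inverse image $g^*$ of a geometric morphism $g:\Sh(S_{\et}) \to \Sh(\Aff',\et)$ (it is left exact and admits a right adjoint), and that inverse image functors preserve constant sheaves: writing $\Delta^{\et}(V) = \Delta_{\Sh(\Aff',\et)}(V)$ and $\Delta_{S_{\et}}(V) = \Delta_{\Sh(S_{\et})}(V)$ for the respective constant-sheaf functors (inverse images of the terminal geometric morphisms to $\Spc$), the terminality of $\Spc$ in $\Topi$ forces $g^* \circ \Delta_{\Sh(\Aff',\et)} \simeq \Delta_{\Sh(S_{\et})}$, hence $g^*\Delta^{\et}(V) \simeq \Delta_{S_{\et}}(V)$. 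Evaluating at the terminal object $\id_S$ of $S_{\et}$ then yields $\Delta^{\et}(V)(S) = \left(g^*\Delta^{\et}(V)\right)(\id_S) \simeq \Delta_{S_{\et}}(V)(\id_S) = \Gamma_{S_{\et}}\!\left(\Delta_{S_{\et}}(V)\right)$.

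Finally I would assemble the pieces: combining the affine identity with the two limit presentations gives, naturally in $V$,
$$\Map(\cX, \Delta^{\et}(V)) \simeq \lim_{T \to \cX} \Delta^{\et}(V)(T) \simeq \lim_{T \to \cX} \Map_{\Topi}\!\left(\Sh(T_{\et}), \Spc/V\right) \simeq \Map_{\Topi}\!\left(\Sh(\cX_{\et}), \Spc/V\right) \simeq \Pi^{\et}_\i(\cX)(V).$$
The main obstacle I anticipate is the affine comparison together with its naturality: one must check that the equivalence $\Delta^{\et}(V)(S) \simeq \Shape(\Sh(S_{\et}))(V)$ is natural both in the affine $S$ (so that it upgrades to an equivalence of $\Aff'_{/\cX}$-indexed diagrams, not merely an objectwise equivalence) and in $V$ (so that the resulting equivalence is one of pro-spaces, i.e. of functors $\Spc \to \Spc$). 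Making this functorial requires organizing the geometric morphisms $g$ and the constant-sheaf comparison into a coherent natural family, which is the genuinely technical part of the argument.
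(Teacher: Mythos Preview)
The paper does not prove this theorem; it is simply quoted from \cite[Theorem 2.40]{etalehomotopy} and used as a black box. So there is no ``paper's own proof'' to compare against.

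That said, your argument is essentially correct and is in fact the standard way to establish the result. The two reductions---writing $\Pi^{\et}_\i(\cX)(V)$ as $\Map_{\Topi}(\Sh(\cX_{\et}),\Spc/V)$ via the $\Shape \dashv \Spc/(\blank)$ adjunction, and writing $\Map(\cX,\Delta^{\et}(V))$ as a limit over $\Aff'_{/\cX}$---are both clean, and they do reduce the statement to the affine case exactly as you say. For the affine case, your identification of restriction to the small \'etale site as the inverse image of a geometric morphism is precisely the morphism $\lambda$ discussed at the start of Section~\ref{bettidescent} of the paper (composed with the \'etale morphism $\Sh(\Aff',\et)/S \to \Sh(\Aff',\et)$), and the terminality argument for $g^*\Delta^{\et}(V)\simeq \Delta_{S_{\et}}(V)$ is correct.

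The naturality you flag is indeed the only point requiring care, but it is not a serious obstacle: both sides assemble into functors $\left(\Aff'_{/\cX}\right)^{op}\times\Spc \to \Spc$, and the comparison map is induced by a single natural transformation of functors $\Aff' \to \Topi$ (namely $\Sh((\blank)_{\et})\Rightarrow \Sh(\Aff',\et)/(\blank)$ coming from $\lambda$), evaluated against the constant diagram $\Spc/V$. Once phrased this way the coherence is automatic.
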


\subsection{Higher Deligne-Mumford Stacks}
Recall that for a scheme $X,$ its small \'etale site $X_{\et}$ is the category of \'etale maps $\Spec A \to X,$ with $A$ a commutative ring, equipped with the \'etale topology. There is a canonical sheaf of rings $\cO_X$ on $X_{\et}$ which assigns to such an object the ring $A,$ and the stalks are 
strictly Henselian. This makes $\left(\Sh\left(X_{\et}\right),\cO_X\right)$ into a strictly Henselian ringed $\i$-topos. 

\begin{definition}
Let $\left(\cE,\cO_{\cE}\right)$ be a strictly Henselian ringed $\i$-topos. It is \textbf{Deligne-Mumford} if there exists a set of objects $\left(E_\alpha\right)_\alpha$ in $\cE$ such that $$\underset{\alpha} \coprod E_\alpha \to 1$$ is an epimorphism and such that for all $\alpha,$
$$\left(\cE/E_{\alpha},\cO_{\cE}|_{E_\alpha}\right)$$ is equivalent to $\left(\Sh\left(X_{\et}\right),\cO_X\right)$ for some scheme $X$ (depending on $\alpha$).
\end{definition}

\begin{definition}
A sheaf of spaces $\cX$ on $\left(\Aff',\et\right)$ is \textbf{Deligne-Mumford} (or a higher Deligne-Mumford stack) if it is equivalent to the functor of points of a Deligne-Mumford strictly Henselian ringed $\i$-topos $\left(\cE,\cO_{\cE}\right)$, i.e. if
$$\cX:S \mapsto  \Map\left(\left(\Sh\left(S_{\et}\right),\cO_{S}\right),\left(\cE,\cO_{\cE}\right)\right),$$ where the space of maps is in the $\i$-category of strictly Henselian ringed $\i$-topoi.
\end{definition}

\begin{definition}
Let $\cX$ be Deligne-Mumford. Then its \textbf{small \'etale site} $\cX_{\et}$ is the $\i$-category of (not-necessarily representable) \'etale maps $\Spec A \to \cX,$ equipped with the induced \'etale topology.
\end{definition}

\begin{remark}
By \cite[Lemma 2.25]{etalehomotopy}, we have an identification $$\Sh\left(\cX_{\et}\right) \simeq \cE.$$
\end{remark}

\begin{proposition}
Let $\cX$ be Deligne-Mumford. Then $\i$-category $\mathfrak{DM}^{\et}_{\cX}$ of (not-necessarily representable) \'etale maps over $\cX$ (with domain any other higher Deligne-Mumford stack) is equivalent to 
$\Sh\left(\cX_{\et}\right).$
\end{proposition}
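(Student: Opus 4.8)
The plan is to combine the identification $\Sh(\cX_{\et}) \simeq \cE$ recorded in the preceding remark with the general principle, going back to Lurie's theory of structured spaces, that \'etale morphisms of strictly Henselian ringed $\i$-topoi into a fixed $(\cE, \cO_\cE)$ are classified by the objects of $\cE$ itself. Once that is in place, the only genuinely new ingredient is that the Deligne--Mumford property is inherited by slices, which is what lets one recognize $\mathfrak{DM}^{\et}_\cX$ on both sides of the resulting equivalence.

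To set up, recall from \cite{SAG} that for any strictly Henselian ringed $\i$-topos $(\cE, \cO_\cE)$ the assignment $E \mapsto (\cE/E, \cO_\cE|_E)$, together with the canonical \'etale projection $\cE/E \to \cE$, is an equivalence from $\cE$ onto the $\i$-category of \'etale morphisms of strictly Henselian ringed $\i$-topoi into $(\cE, \cO_\cE)$. In particular the source of every such morphism is, up to equivalence, a slice of $(\cE, \cO_\cE)$. Consequently, if every slice $(\cE/E, \cO_\cE|_E)$ of a Deligne--Mumford ringed $\i$-topos is again Deligne--Mumford, then this equivalence restricts to an equivalence between $\cE$ and the full subcategory of \'etale morphisms into $(\cE, \cO_\cE)$ whose source is Deligne--Mumford.

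So the crux is to verify that slices of a Deligne--Mumford $(\cE, \cO_\cE)$ stay Deligne--Mumford. Fix objects $(E_\alpha)_\alpha$ with $\coprod_\alpha E_\alpha \to 1$ an epimorphism and $(\cE/E_\alpha, \cO_\cE|_{E_\alpha}) \simeq (\Sh((X_\alpha)_{\et}), \cO_{X_\alpha})$ for schemes $X_\alpha$. Given $E \in \cE$, base change along $E \to 1$ preserves coproducts and epimorphisms in an $\i$-topos, so $\{E \times E_\alpha \to E\}_\alpha$ is again an epimorphic family; moreover $(\cE/E)/(E \times E_\alpha) \simeq \cE/(E \times E_\alpha)$ is a slice of $\cE/E_\alpha \simeq \Sh((X_\alpha)_{\et})$ by an object, with structure sheaf the corresponding restriction of $\cO_{X_\alpha}$. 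It thus suffices to show: for a scheme $Y$ and an object $F \in \Sh(Y_{\et})$, the ringed $\i$-topos $(\Sh(Y_{\et})/F, \cO_Y|_F)$ is Deligne--Mumford. Writing $F$ as a small colimit of representables $h_{U_i}$ with each $U_i \to Y$ an \'etale morphism of schemes, the induced map $\coprod_i h_{U_i} \to F$ is an epimorphism, and $\Sh(Y_{\et})/h_{U_i} \simeq \Sh((U_i)_{\et})$ with restricted structure sheaf $\cO_{U_i}$; hence $(\Sh(Y_{\et})/F, \cO_Y|_F)$ is Deligne--Mumford, as needed.

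It remains to pass from structured $\i$-topoi back to stacks. The functor $\cY \mapsto (\Sh(\cY_{\et}), \cO_\cY)$ is an equivalence from the $\i$-category of higher Deligne--Mumford stacks to that of Deligne--Mumford strictly Henselian ringed $\i$-topoi --- this is built into the definition of higher Deligne--Mumford stack, using \cite[Lemma 2.25]{etalehomotopy} to identify $\Sh(\cY_{\et})$ with the underlying $\i$-topos --- and it carries a morphism of stacks to an \'etale morphism of structured $\i$-topoi exactly when the morphism of stacks is \'etale. Under this dictionary $\mathfrak{DM}^{\et}_\cX$ corresponds to the $\i$-category of \'etale morphisms of structured $\i$-topoi over $(\cE, \cO_\cE)$ whose source is Deligne--Mumford, which by the two preceding paragraphs is equivalent to $\cE$, hence to $\Sh(\cX_{\et})$; composing the equivalences gives $\mathfrak{DM}^{\et}_\cX \simeq \Sh(\cX_{\et})$. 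The main obstacle is the third paragraph --- stability of the Deligne--Mumford condition under slicing, and in particular recognizing $(\Sh(Y_{\et})/F, \cO_Y|_F)$ as Deligne--Mumford for a non-representable $F$; matching the two notions of ``\'etale morphism'' invoked along the way is a further, more bookkeeping-level point handled by \cite{etalehomotopy}, while everything else is an assembly of equivalences already in the literature.
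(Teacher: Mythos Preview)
Your proposal is correct and follows essentially the same approach as the paper. The paper's own proof is a one-line citation --- it invokes the identification $\Sh(\cX_{\et})\simeq\cE$ from the preceding remark together with \cite[Remark 6.3.5.10]{htt} (the equivalence between $\cE$ and \'etale geometric morphisms into $\cE$) --- whereas you unpack the content behind that citation, in particular supplying the stability of the Deligne--Mumford condition under slicing that is needed to see every object of $\cE$ arises from an \'etale map of higher Deligne--Mumford stacks.
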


\begin{proof}
This follows immediately from the previous remark and \cite[Remark 6.3.5.10]{htt}
\end{proof}

In particular, for any scheme $T,$ we have that $\Sh\left(T_{\et}\right)$ can be identified with the $\i$-category $\mathfrak{DM}^{\et}_{T}$ of \'etale maps $\cX \to T,$ with $\cX$ a higher Deligne-Mumford stack.

\subsection{Brief review on some notions in log geometry}

We assume that the reader is familiar with the basic notions of logarithmic geometry (see for example \cite[Appendix]{KNIRS}). We include a brief review of some of the objects that play an important role in this paper.

Let $X$ be a fine saturated log scheme, with a Deligne-Falterings log structure $L\colon A\to \Div_X$ (i.e. $A$ is an \'etale sheaf of monoids on $X$, and $L$ is a symmetric monoidal functor with trivial kernel to the category of stack of pairs $(L,s)$ of line bundles with a section), and $n\in \bN$.

\begin{definition}
The $n$-th root stack $\radice{n}{X}$ is the stack on the category of schemes over $X$, that assigns to a scheme $f\colon T\to X$ the groupoid of Deligne-Falterings structures $f^*\frac{1}{n}A\to \Div_X$ extending $f^*L\colon f^*A\to \Div_T$. We denote also by $\radice{n}{X}$ the underlying stack on all schemes. By above, there is a canonical map $$\radice{n}{X} \to X.$$
\end{definition}

If $n\mid m$ there is a natural map $\radice{m}{X}\to \radice{n}{X}$, given by restricting along the inclusion $\frac{1}{n}A\to \frac{1}{m}A$. The assignment $n \mapsto \radice{n}{X}$ assembles into a pro-object in stacks, where $\bN$ is ordered by divisibility.

\begin{definition}
We denote the corresponding pro-object by $\radice{\infty}{X}_{pro}.$ The actual limit in stacks is called the \textbf{infinite root stack} of $X$ and is denoted by $\radice{\infty}{X}.$
\end{definition}

\begin{remark}
In \cite{KNIRS}, we refer to the pro-object as the infinite root stack instead.
\end{remark}

If the log structure of $X$ is coherent (in particular if $X$ is fine and saturated), the stacks $\radice{n}{X}$ are algebraic. More precisely, if $X\to \Spec \bZ[P]$ is a Kato chart, where $P$ is a fine saturated sharp monoid, then the $n$-th root stack  $\radice{n}{X}$ is isomorphic to the quotient stack $[X\times_{\Spec \bZ[P]}\Spec \bZ[\frac{1}{n}P]/\boldsymbol{\mu}_n(P)]$, where $\boldsymbol{\mu}_n(P)$ is the Cartier dual of the cokernel of the map $P^\gp\to  \frac{1}{n}P^\gp$, acting on the product $X\times_{\Spec \bZ[P]}\Spec \bZ[\frac{1}{n}P]$ via the trivial action on $X$ and the natural action on the second factor.

A limit version of this construction gives  an fpqc atlas for the infinite root stack (which is not  algebraic). Indeed, again if $X\to \Spec \bZ[P]$ is a Kato chart, and if we denote by $P_\bQ$  the rational cone spanned  by $P$ inside  $P^\gp\otimes\bQ$, then the infinite root stack $\radice{\infty}{X}$ is isomorphic to the global quotient $[X\times_{\Spec \bZ[P]}\Spec \bZ[P_\bQ]/\boldsymbol{\mu}_\infty(P)]$, where $\boldsymbol{\mu}_\infty(P)=\varprojlim_n \boldsymbol{\mu}_n(P)$, and we stackify with respect to the fpqc topology.

In general, there exists an \'etale cover $\{X_i\to X\}$ where the $X_i$ admit Kato charts $X_i\to \Spec \bZ[P_i]$, and this provides an fpqc atlas $$\bigsqcup_i X_i \times_{\Spec \bZ[P_i]}\Spec \bZ[(P_i)_\bQ]\to \radice{\infty}{X}.$$

Sheaves on an appropriately defined ``small \'etale site'' of the infinite root stack of a fine saturated log scheme $X$ are the same as sheaves on the Kummer \'etale site of $X$ (a natural generalization of the small \'etale site of a scheme), as we recall now.

\begin{definition}
We denote by $\Sh\left(\radice{\i}{X}^{rep}_{\et}\right)$ the $\i$-topos of sheaves of spaces on the site of maps $\cY \to \radice{\infty}{X}$ of stacks which are \'etale and representable by an algebraic space, with the induced \'etale topology.
\end{definition}

We recall the notion of Kummer \'etale maps of log schemes.

\begin{definition}
Let $f:X \to Y$ be a morphism of fine saturated log schemes whose underlying map of schemes is of finite presentation.
\begin{itemize}
\item  $f$ is \textbf{log \'etale} if \'etale locally on $X$ and $Y$ there exists a chart
$$\left(P \to M_X(X), Q \to M_Y(Y),P \to Q\right)$$ of $f$ such that
$$Y \to X \times_{\Spec\left(\mathbb{Z}\left[P\right]\right)} \Spec \mathbb{Z}\left[P\right]$$ and
$$\Spec(\cO_Y\left[Q^{\gp}\right]) \to \Spec(\cO_Y\left[P^{\gp}\right])$$ are \'etale maps of schemes.
\item $f$ is \textbf{Kummer} if the induced map $f^{-1} \overline{M_Y} \to  \overline{M_X}$ is injective and the cokernel of 
$$f^{-1} \overline{M_Y}^{\gp} \to \overline{M_X}^{\gp}$$ is torsion.
\item $f$ is called \textbf{Kummer \'etale} if  it is both log \'etale and Kummer.
\end{itemize}
\end{definition}

\begin{definition}
Let $X$ be a finite saturated log scheme. Its \textbf{Kummer \'etale site}, denoted by $X_{k\et}$ is the subcategory of log schemes over $X$ on the Kummer \'etale morphisms, with jointly surjective Kummer \'etale morphisms as covers.
\end{definition}

\begin{theorem}\label{thm:TVmain}
There is a canonical equivalence of $\i$-categories $$\Sh\left(X_{k\et}\right) \simeq \Sh\left(\radice{\i}{X}^{rep}_{\et}\right).$$
\end{theorem}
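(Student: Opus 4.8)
\textbf{Proof proposal for Theorem \ref{thm:TVmain}.}

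The plan is to reduce the statement to a comparison of sites via a morphism of sites that is continuous, cocontinuous, and induces an equivalence after sheafification; the heavy lifting has essentially already been done by Talpo--Vistoli in \cite{TV} at the level of $1$-topoi, and the task here is to upgrade their comparison to $\i$-topoi of sheaves of spaces. First I would recall the explicit functor $u\colon X_{k\et} \to \radice{\i}{X}^{rep}_{\et}$ constructed by Talpo--Vistoli: a Kummer \'etale map $T \to X$, which \'etale-locally is pulled back from a Kummer map of monoid algebras $\Spec \bZ[Q] \to \Spec \bZ[P]$ with $P \to Q$ Kummer, is sent to a representable \'etale map to $\radice{\i}{X}$ obtained by dividing out the finite group $\boldsymbol{\mu}(Q/P)$ and pulling back. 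In the other direction, an \'etale representable map $\cY \to \radice{\i}{X}$ factors, \'etale-locally on $X$, through some finite level $\radice{n}{X}$ (since $\cY$ is of finite presentation and $\radice{\i}{X} = \lim_n \radice{n}{X}$), and the composite $\cY \to \radice{n}{X} \to X$ is Kummer \'etale on the coarse log scheme; this gives the essentially-inverse assignment. These two assignments are mutually quasi-inverse equivalences of $1$-categories (or rather of the underlying $\i$-categories, which are $1$-categories here) compatible with the two \'etale topologies, which is precisely the content extracted from \cite{TV}.

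Next I would invoke the $\i$-categorical comparison lemma for sites: if $u\colon \sC \to \sD$ is a functor between (small) sites that is continuous and induces an equivalence on the associated topoi of sheaves of \emph{sets} — equivalently, is a morphism of sites that is ``covering-lifting'' and ``cover-preserving'' in the appropriate sense — then it induces an equivalence $\Sh_\i(\sD) \xrightarrow{\sim} \Sh_\i(\sC)$ on $\i$-topoi of sheaves of spaces. Concretely one checks the hypotheses of \cite[Remark 6.2.3.14]{htt} or \cite[Lemma A.3.?]{SAG}-type statements: since $u$ is (up to the equivalence above) an equivalence of underlying categories identifying the two Grothendieck topologies, the induced geometric morphism on presheaf $\i$-categories restricts to an equivalence on subcategories of sheaves. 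The point is that an equivalence of sites is detected entirely at the level of the underlying category-with-topology, and the passage $\sC \mapsto \Sh_\i(\sC)$ is functorial for such data, so no new homotopical input is needed beyond knowing the $1$-categorical equivalence of sites.

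The main obstacle, and the step deserving genuine care, is verifying that the Talpo--Vistoli comparison really is an equivalence of \emph{sites} and not merely of the associated categories of sheaves of sets after coarsening — in particular that the assignment $\cY \mapsto (\cY \to \radice{n}{X} \to X)$ is well-defined independently of the level $n$ chosen, compatibly under refinement of the \'etale cover of $X$, and that covers correspond to covers in both directions. This requires the descent statement that an \'etale representable map to $\radice{\i}{X}$ descends to some finite root stack \'etale-locally, together with the identification of $\boldsymbol{\mu}_\i(P)$-torsors over a chart with Kummer \'etale data; both are in \cite{TV}, but assembling them into a clean statement about sites — and checking that ``jointly surjective Kummer \'etale'' on the log side matches ``jointly surjective \'etale representable'' on the stack side — is the technical crux. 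Once that is in place, the $\i$-topos statement is formal.
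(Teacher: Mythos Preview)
Your approach is defensible but takes a harder road than the paper does, and the ``main obstacle'' you identify is one the paper's argument simply sidesteps.

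The paper's proof is two lines: both sites are ordinary $1$-categories with finite limits, so by (the proof of) \cite[Proposition~6.4.5.7]{htt} the associated $\i$-topoi of sheaves of spaces are $1$-localic, i.e.\ completely determined by their underlying $1$-topoi of sheaves of sets. Talpo--Vistoli's \cite[Theorem~6.21]{TV} gives precisely an equivalence of those $1$-topoi, and the $\i$-statement follows. No equivalence of \emph{sites} is ever needed --- only an equivalence of the resulting $1$-topoi.

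By contrast, you aim to upgrade the Talpo--Vistoli comparison to a literal equivalence of sites (underlying categories plus matching topologies) and then invoke a comparison lemma. That is a correct strategy in principle, but, as you yourself note, \cite{TV} is stated at the level of topoi, so you would have to extract a site-level statement from their constructions and verify independence of all the local choices (level $n$, chart, \'etale cover). That is genuine extra work, and it is exactly the work the paper avoids by passing through $1$-localicness. Your vague citations (``\cite[Remark~6.2.3.14]{htt} or \cite[Lemma~A.3.?]{SAG}-type statements'') reflect this: the clean lemma you want --- equivalence of sites $\Rightarrow$ equivalence of $\i$-topoi --- is easy once you have a site equivalence, but getting the site equivalence is the hard part, and it is unnecessary.

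In short: your route would work if completed, but the paper's observation that both sites have finite limits lets one bypass the site-level comparison entirely and read the $\i$-equivalence off from the $1$-topos equivalence already in \cite{TV}.
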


\begin{proof}
Since both sites have finite limits, this follows immediately from \cite[Theorem 6.21]{TV} and the proof of \cite[Proposition 6.4.5.7]{htt}.
\end{proof}

We also recall the notion of log regularity, a generalization of regularity to the logarithmic setup.

\begin{definition}[{\cite[Definition 2.2]{niziol}}]
A fine saturated locally Noetherian log  scheme $X$ is \textbf{log regular} if for every point $x\in X$ the ring $\cO_{X,\overline{x}}/I_{\overline{x}}\cO_{X,\overline{x}}$ is regular, and $${\rm dim}(\cO_{X,\overline{x}})={\rm dim}(\cO_{X,\overline{x}}/I_{\overline{x}}\cO_{X,\overline{x}})+{\rm rk}(\overline{M_X})_{\overline{x}}^\gp$$ where $\overline{x}$ is a geometric point lying over $x$, $\cO_{X,\overline{x}}$ denotes the stalk of the structure sheaf for the \'etale topology, and $I_{\overline{x}}=M_{X,\overline{x}}\setminus \cO_{X,\overline{x}}^\times$.
\end{definition}

Given a fine saturated log scheme $X,$ there are various natural ways to construct a pro-space which is a candidate for the ``underlying homotopy type'' of $X$:

\begin{itemize}
 \item[1)] the \'etale homotopy type of the $\i$-root stack $\radice{\i}{X}$,
 \item[2)] the \'etale homotopy type of the $\i$-root stack $\radice{\i}{X}^{pro}$ regarded as a pro-object,
 \item[3)] the shape of the $\i$-topos $\Sh\left(X_{k\et}\right)$,
 \item[4)] the shape of the $\i$-topos $\Sh\left(\radice{\i}{X}^{rep}_{\et}\right)$,
 \item[5)] for a log scheme locally of finite type over $\mathbb{C},$ the profinite completion of its Kato-Nakayama space $X_{log}$, 
\end{itemize}
and in the log regular case, also
\begin{itemize}
 \item[6)] the profinite \'etale homotopy type of the trivial locus $X^{\triv}.$
\end{itemize}
One of the main goals of this paper is to show that, up to profinite completion, $1)-4)$ agree. Note that \cite[Theorem 7.3]{KNIRS} implies that $2)$ and $5)$ agree when working over $\mathbb{C}.$

\begin{remark}
Note that Theorem \ref{thm:TVmain} immediately implies that $3)$ and $4)$ agree, even before profinite completion.
\end{remark}

\begin{remark}
Although $1)$ and $2)$ may seem as though they are practically the same, they are subtly different. Moreover, the proof that they agree after profinite completion is very technical and involved. Furthermore, in the proof of \cite[Theorem 4.36]{Berner} Berner implicitly assumes, without proof, that $4)$ and $2)$ gives the same profinite homotopy type. We will show that this is indeed true, but it is very far from obvious.
\end{remark}

Expanding on results of Berner \cite{Berner}, we will show furthermore in Section \ref{sec:log-regular}, that if $X$ is log regular and in characteristic zero, then $6)$ also agrees with $1)-4)$ and in arbitrary characteristic the result is still true after completing away from the residue characteristic. See Corollary \ref{cor:Berner}.

\section{fpqc descent for Betti stacks}
\label{bettidescent}
In this section we will prove a technical result needed to establish that $1)$ and $2)$ above are the same after profinite completion. Namely, we will prove that a Betti stack $\Delta^{\et}\left(V\right)$ for $V$ a $\pi$-finite space satisfies fpqc descent. We will do this over a fixed base scheme $S.$

\begin{proposition}\label{prop:Gfin}
Let $G$ be a finite group. Then its classifying stack of (\'etale) torsors $BG$ satisfies fpqc descent.
\end{proposition}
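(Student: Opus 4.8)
The plan is to show that $BG$, the stack of \'etale $G$-torsors for a finite group $G$, satisfies fpqc descent as a sheaf of spaces on $(\Aff', \et)$. First I would recall that $BG$ is equivalently the stack sending an affine scheme $\Spec A$ to the groupoid (or rather the $1$-truncated space) of \'etale $G$-torsors over $\Spec A$; since $G$ is finite, such torsors are automatically finite \'etale covers, and as a stack $BG = \Delta^{\et}(BG_{\mathrm{top}})$ is the Betti stack of the $\pi$-finite space $BG_{\mathrm{top}} = K(G,1)$. The statement to prove is that for any fpqc cover $\Spec B \to \Spec A$ with \v{C}ech nerve $\Spec B^{\bullet}$, the natural map $BG(\Spec A) \to \lim_{\Delta} BG(\Spec B^{\bullet})$ is an equivalence.

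The key reduction is to pass from descent of the stack to descent of its ``linearization.'' A $G$-torsor over $\Spec A$ is the same datum as a locally constant sheaf of sets on $(\Spec A)_{\et}$ with fiber $G$ on which $G$ acts freely and transitively, or equivalently an $A$-algebra $C$ which is finite \'etale of rank $|G|$ together with a $G$-action making $\Spec C \to \Spec A$ a torsor. So the first real step is: finite \'etale algebras satisfy fpqc descent. This is classical (it is effectivity of fpqc descent for the fibered category of finite \'etale algebras, cf. SGA1 or \cite[Tag 0245]{stacks} for descent of quasi-coherent sheaves plus the fact that ``finite \'etale'' is an fpqc-local property of morphisms). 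Then one notes that a $G$-action, being extra structure captured by morphisms of $A$-algebras, also descends because morphisms of quasi-coherent algebras form a sheaf for the fpqc topology; and the torsor condition (that $G \times \Spec C \to \Spec C \times_{\Spec A} \Spec C$ is an isomorphism) is itself an fpqc-local condition on morphisms, hence can be checked after base change. Putting these together gives that the groupoid-valued functor $\Spec A \mapsto \{G\text{-torsors}\}$ is a stack for the fpqc topology, i.e. satisfies descent in the $1$-categorical sense; since $BG$ is $1$-truncated, $1$-categorical descent along \v{C}ech nerves is the same as $\infty$-categorical descent (there are no higher homotopies to match), so this suffices.

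Alternatively — and this may be cleaner to write — I would use the explicit description $BG = \Delta^{\et}(K(G,1))$ together with the fact that for a disjoint union decomposition and the epimorphism condition one only needs to understand $\Delta^{\et}$ on the connected $\pi$-finite space $K(G,1)$, and $\Delta^{\et}(K(G,1))$ can be built as the quotient $\Delta^{\et}(\ast)/\!\!/G = [\ast/G]$ in stacks, i.e. the geometric realization of the bar construction $\cdots \Rrightarrow G \times G \Rightarrow G \rightrightarrows \ast$ where each $G^{\times n}$ denotes the constant \'etale sheaf. Constant sheaves with finite value satisfy fpqc descent (a finite set is a finite \'etale scheme $\coprod_{|set|} S$, and such a scheme, being affine and of finite presentation, satisfies fpqc descent by classical faithfully flat descent), and fpqc descent is preserved under finite limits and — crucially for the realization — one checks descent for $[\ast/G]$ directly by the torsor interpretation above rather than trying to commute descent past the colimit.

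The main obstacle I anticipate is the effectivity part: showing that fpqc-descent data for finite \'etale $G$-torsors is effective, i.e. that a compatible system of torsors over the \v{C}ech nerve actually glues to a torsor over the base, rather than merely to some fpqc sheaf that need not be representable by a finite \'etale scheme. This is where faithfully flat descent for affine (equivalently, quasi-coherent) morphisms does the work, combined with the fact that being finite, flat, and unramified (hence finite \'etale) are all fpqc-local on the base; I expect the authors to invoke standard references for this rather than reprove it, and the genuinely new content is simply the observation that this classical $1$-categorical descent, for a $1$-truncated stack, automatically upgrades to descent in $\Shv(\Aff', \et)$ as an $\infty$-sheaf. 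This proposition will then feed into the more substantial inductive argument (up the Postnikov tower of a general $\pi$-finite $V$, using fibrations with fibers $K(\pi, n)$) that $\Delta^{\et}(V)$ satisfies fpqc descent for all $\pi$-finite $V$.
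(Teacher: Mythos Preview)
Your argument is correct, but the paper takes a much shorter route: it simply observes that $BG=[\ast/G]$ is an algebraic stack with affine (hence quasi-affine) diagonal, and invokes the well-known fact that any algebraic stack with quasi-affine diagonal satisfies fpqc descent. Your approach unwinds exactly the special case of this general result---descent for finite \'etale algebras, compatibility of $G$-actions, fpqc-locality of the torsor condition---so it is more self-contained and makes explicit where effectivity comes from, whereas the paper's approach trades that transparency for brevity and places the proposition in a broader structural context (the same citation would cover $BG$ for any affine group scheme, not just finite $G$). Your anticipation that this feeds into the Postnikov-tower induction for general $\pi$-finite $V$ is also exactly how the paper proceeds.
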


\begin{proof}
This follows from the well known fact that any algebraic stack with quasi-affine diagonal satisfies fpqc descent.
\end{proof}

Let $X$ be a scheme. There is a canonical geometric morphism $$\lambda:\Sh\left(X_{\et}\right) \to \Sh\left(\Sch/X,\et\right).$$ The inverse image functor $\lambda^*$ is given simply by restriction. It in fact has a \emph{left} adjoint $\lambda_!.$ Identifying $\Sh\left(X_{\et}\right)$ with the $\i$-category $\mathfrak{DM}^{\et}_X$ of $\i$-Deligne Mumford stacks \'etale over $X,$ and using the canonical identification $$\Sh\left(\Sch/X,\et\right) \simeq \Sh\left(\Sch,\et\right)/X,$$ $\lambda_!$ sends an \'etale map $\cY \to X$ from a Deligne-Mumford stack simply to itself, as an object of $\Sh\left(\Sch,\et\right)/X.$ Since \'etale maps are stable under pullback, we conclude that $\lambda_!$ preserves fibered products. It moreover preserves the terminal object, and therefore is left exact. We conclude that there is a well-defined geometric morphism in the opposite direction
$$\tau:\Sh\left(\Sch/X,\et\right) \to \Sh\left(X_{\et}\right)$$
with $\tau^*=\lambda_!$ and $\tau_*=\lambda^*.$

Unwinding definitions, we see that if $G$ is a sheaf on the small \'etale site of $X,$ that $\tau^*\left(G\right)$ is the sheaf
$$\left(f:Y \to X\right) \mapsto \Gamma_Y\left(f^*G\right).$$

\begin{lemma} \label{lem:Bhatt}
Let $X$ be an affine scheme. Let $A$ be a torsion abelian sheaf on the small \'etale site of $X,$ and 
$$K\left(\tau^*\left(A\right),n\right):\left(\Sch/X\right)^{op} \to \Spc$$ the \'etale sheaf of spaces given by the Eilenberg-Maclane construction applied to $\tau^*\left(A\right).$ Then $K\left(\tau^*\left(A\right),n\right)$ satisfies fpqc descent.
\end{lemma}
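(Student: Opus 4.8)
The plan is to reduce the statement to a known fpqc-descent result for étale cohomology, via dévissage on the torsion sheaf $A$ and the Postnikov/homotopy structure of the Eilenberg--MacLane construction.

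First I would reduce to the case where $A$ is finite. Since $A$ is a torsion abelian sheaf on the small étale site of the affine scheme $X$, it is the filtered colimit of its finite (even $\ell$-power torsion, summing over primes) subsheaves $A_i$. The functor $\tau^*$ is a left adjoint, hence preserves colimits, and the Eilenberg--MacLane construction $K(-,n)$ on sheaves of spectra/connective objects commutes with filtered colimits; moreover fpqc descent (a limit condition) is preserved under filtered colimits of sheaves over an affine base because Čech nerves of an fpqc cover by affines involve only finitely many factors at each level and filtered colimits commute with finite limits. So it suffices to treat $A$ finite. By further dévissage along a composition series we may even assume $A = \underline{\mathbb{Z}/\ell}$ is constant cyclic of prime order, or at least a finite locally constant sheaf; twisting by a finite étale cover (which is itself an fpqc cover, so descent along it is automatic by Proposition \ref{prop:Gfin}-style arguments) reduces to the constant case.

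Next I would identify $K(\tau^*(A), n)$ with a truncation of an object whose descent is classical. Concretely, for $A$ finite the sheaf of spaces $K(\tau^*(A), n)$ on $\Sch/X$ represents, on each $f\colon Y \to X$, the space whose homotopy groups are the étale cohomology groups $H^{n-k}_{\et}(Y, f^*A)$. Thus the fpqc-descent statement for $K(\tau^*(A),n)$ is exactly the statement that étale cohomology with finite coefficients satisfies fpqc descent — i.e. that for an fpqc cover $Y' \to Y$ of $X$-schemes, the natural map from $R\Gamma_{\et}(Y, f^*A)$ to the totalization of the cosimplicial object $R\Gamma_{\et}(Y'^{\bullet/Y}, f^*A)$ is an equivalence. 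This is a well-known theorem (fpqc descent for torsion étale cohomology, going back to Grothendieck's work and treated e.g. via the comparison with the fppf topology, or cf. the standard references and Bhatt's work alluded to by the lemma's label). I would cite this, after checking the bookkeeping that the Eilenberg--MacLane sheaf of spaces we have built really does compute $R\Gamma_{\et}$ evaluated with a shift: unwinding the description of $\tau^*$ given just before the lemma, $\tau^*(A)(f\colon Y\to X) = \Gamma_Y(f^*A)$, and applying $K(-,n)$ objectwise followed by sheafification in the étale topology produces precisely $\tau_{\geq 0}\bigl(R\Gamma_{\et}(Y, f^*A)[n]\bigr)$ as a functor of $Y$; since $A$ is torsion of bounded... actually not bounded degree, but each cohomology group is accessible and the descent spectral sequence converges, this is harmless.

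The main obstacle I expect is the passage between the a priori \emph{presheaf-level} Eilenberg--MacLane construction $K(\tau^*(A),n)$ (which is what the statement literally writes down) and the \emph{étale-sheafified} object that computes cohomology, together with the convergence issues when $A$ is only torsion rather than finite. One must be careful that sheafification does not destroy the homotopy-theoretic identification, and that the totalization defining fpqc descent — an infinite limit — interacts correctly with the (possibly unbounded) Postnikov tower of $K(\tau^*(A),n)$. The clean way around this is precisely the dévissage in the first paragraph: once $A$ is finite, $R\Gamma_{\et}(Y, f^*A)$ has finite cohomology groups in a range and the relevant spectral sequences converge without subtlety, and the general torsion case follows by the filtered-colimit argument over the affine base. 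A secondary, more routine point is checking fpqc descent for the auxiliary finite étale twists, which follows from Proposition \ref{prop:Gfin} together with the fact that a finite locally constant sheaf becomes constant after such a twist.
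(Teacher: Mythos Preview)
Your core idea is right: the fpqc-descent statement for $K(\tau^*(A), n)$ amounts to showing that the functor $(f\colon Y \to X) \mapsto \tau_{\geq 0}\,R\Gamma_{\et}(Y, f^*A)[n]$ satisfies fpqc descent, and this is known. The paper uses exactly this input, citing the Bhatt--Mathew $v$-descent result (Proposition 5.1 of their paper), which applies directly to \emph{torsion} abelian sheaves---not just finite ones.

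Your d\'evissage to finite (and then constant) $A$ is therefore unnecessary, and your execution of it has a gap. The justification you give for the filtered colimit step---``\v{C}ech nerves of an fpqc cover by affines involve only finitely many factors at each level''---is not the relevant point: the descent condition is a totalization over all of $\Delta$, an infinite limit, which does not commute with filtered colimits in general. The argument can be salvaged by observing that $K(\tau^*(A), n)$ takes values in $n$-truncated spaces, so the totalization agrees with a finite partial totalization over $\Delta_{\leq n+1}$; but you did not say this. The further reduction to constant coefficients by finite \'etale twisting is also unclear: descent after passing to a finite \'etale cover does not obviously give descent before, and Proposition \ref{prop:Gfin} is about $BG$ rather than about reflecting descent along covers.

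The paper's route avoids all of this. It views $(f\colon Y \to X) \mapsto R\Gamma(Y_{\et}, f^*A)$ as a functor to $\mathbf{Ch}_{\geq 0}$ for torsion $A$, cites Bhatt--Mathew for its $v$-descent (hence fpqc descent), then applies Dold--Kan and the forgetful functor to $\Spc$---both of which preserve limits, so the composite is still an fpqc sheaf of spaces---and finally identifies the result with $K(\tau^* A, n)$ by unwinding the definition of $\tau^*$ and using the compatibility of the Eilenberg--MacLane construction with inverse images. No reduction on $A$ is needed, and the identification step (the part you flagged as the main obstacle) is handled cleanly via the description of sections of an \'etale geometric morphism as a space of lifts.
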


\begin{proof}
Consider the functor 
\begin{eqnarray*}
\cF_A:\left(\Sch/X\right)^{op} &\to& \mathbf{Ch}_{\ge 0}\\
\left(f:Y \to X\right) &\mapsto& R\Gamma\left(Y_{\et},f^*A\right)
\end{eqnarray*}
where $R\Gamma\left(Y_{\et},f^*A\right)$ is global sections over $Y$ of the right derived functor of the abelian sheaf $f^*A$ on $Y_{\et}.$ By \cite[Proposition 5.1]{BhattMathew}, this functor is a sheaf with respect to the $v$-topology. Since every standard fpqc cover is a $v$-cover, we conclude it is also a sheaf with respect to the fpqc topology. The classical Dold-Kan correspondence furnishes us with an equivalence of $\i$-categories $$DK:\mathbf{Ch}_{\ge 0} \simeq \left(\mathbf{Ab}^{\Delta^{op}}\right)^o,$$ between non-negatively graded chain complexes, and the $\i$-category associated to the model category of simplicial abelian groups. The latter is monadic over the $\i$-category of spaces $\Spc,$ as it is the $\i$-category of algebras for an algebraic theory. The forgetful functor $u:\left(\mathbf{Ab}^{\Delta^{op}}\right)^o \to \Spc,$ sends a simplicial abelian group to its underlying simplicial set. Since this functor preserves limits, we conclude that for any $n,$ $$u \circ DK \circ \cF_A\left[n\right]$$ is an fpqc sheaf of spaces. Unwinding the definitions, we see that this functor sends $f$ above to the space $$\Gamma_{Y_{\et}}\left(K\left(f^*A,n\right)\right).$$ By \cite[Remark 6.5.1.4]{htt}, we can identify this space with $$\Gamma_{Y_{\et}}\left(f^*K\left(A,n\right)\right).$$ This can in turn be identified with the space of lifts
$$\xymatrix{& \Sh\left(X_{\et}\right)/K\left(A,n\right) \ar[d]\\ \Sh\left(Y_{\et}\right) \ar@{-->}[ru] \ar[r]^-{f} & \Sh\left(X_{\et}\right).  }$$ Since $\Sh\left(X_{\et}\right)/K\left(A,n\right) \to \Sh\left(X_{\et}\right)$ is an \'etale geometric morphism, such a lift must be a map of strictly Henselian ringed $\i$-topoi, i.e. of Deligne-Mumford stacks (viewed as $\i$-topoi with a structure sheaf). Therefore, $$u \circ DK \circ \cF_A\left[n\right] \simeq \tau^*\left(K\left(A,n\right)\right)$$ which is equivalent to $K\left(\tau^*A,n\right),$ again by \cite[Remark 6.5.1.4]{htt}.
\end{proof}

\begin{corollary}
Let $A$ be a finite abelian group. Then the Betti stack $\Delta^{\et}\left(K\left(A,n\right)\right)$ satisfies fpqc descent.
\end{corollary}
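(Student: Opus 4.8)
The plan is to deduce this from Lemma~\ref{lem:Bhatt} by a descent-is-local argument. First I would recall that, since $\Delta^{\et}\left(K(A,n)\right)$ is by construction already an \'etale sheaf, in order to show it satisfies fpqc descent it suffices to show its restriction to $\Sch/X$ is an fpqc sheaf for every affine scheme $X$ over $S$ (equivalently, that it satisfies descent along each faithfully flat morphism $\Spec B \to \Spec A$ of affines over $S$, since every fpqc cover is refined, after a Zariski-localization which is harmless, by such a morphism). So fix an affine scheme $X$ over $S$.

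The key step is to identify $\Delta^{\et}\left(K(A,n)\right)|_{\Sch/X}$ with a sheaf of the type handled by Lemma~\ref{lem:Bhatt}. By \cite[Theorem 2.40]{etalehomotopy} and the description of the shape functor as $\Gamma\circ\Delta$, for any $f\colon Y \to X$ we have
\[
\Delta^{\et}\left(K(A,n)\right)(Y) \simeq \Pi^{\et}_\i(Y)\left(K(A,n)\right) \simeq \Gamma_{Y_{\et}}\left(\Delta_{Y_{\et}}\left(K(A,n)\right)\right).
\]
Since $\Delta_{Y_{\et}}$ is left exact and colimit-preserving it commutes with the $n$-fold bar construction, so $\Delta_{Y_{\et}}\left(K(A,n)\right)\simeq K\left(\underline{A}_Y,n\right)$ for $\underline{A}_Y$ the constant sheaf of abelian groups $A$ on $Y_{\et}$; and because the unique geometric morphism $\Sh(Y_{\et})\to\Spc$ factors through $f$, we get $\Delta_{Y_\et}\simeq f^*\circ\Delta_{X_\et}$ and hence $\underline{A}_Y\simeq f^*\underline{A}_X$. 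Comparing with the formula $\tau^*(G)(f\colon Y\to X)\simeq\Gamma_{Y_{\et}}(f^*G)$ recorded before Lemma~\ref{lem:Bhatt}, this yields
\[
\Delta^{\et}\left(K(A,n)\right)\big|_{\Sch/X} \simeq \tau^*\left(K\left(\underline{A}_X,n\right)\right) \simeq K\left(\tau^*\underline{A}_X,n\right),
\]
the last equivalence because $\tau^*$ is left exact and colimit-preserving, exactly as used in the proof of Lemma~\ref{lem:Bhatt}.

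Finally, since $A$ is finite the constant sheaf $\underline{A}_X$ is a torsion abelian sheaf on the small \'etale site of $X$, so Lemma~\ref{lem:Bhatt} applies and shows that $K\left(\tau^*\underline{A}_X,n\right)$ satisfies fpqc descent. By the identification above, the restriction of $\Delta^{\et}\left(K(A,n)\right)$ to $\Sch/X$ is an fpqc sheaf; as $X$ was an arbitrary affine scheme over $S$, $\Delta^{\et}\left(K(A,n)\right)$ satisfies fpqc descent.

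The one genuine obstacle is the identification in the second paragraph: one must match the restriction of the Betti stack --- a priori built as a colimit over the large \'etale site --- with $\tau^*$ of a \emph{constant} sheaf on the \emph{small} \'etale site, which requires unwinding the definition of the \'etale homotopy type pairing together with the facts that the shape is computed by $\Gamma\circ\Delta$ and that constant sheaves pull back to constant sheaves. The reduction to affine bases is standard, and the final appeal to Lemma~\ref{lem:Bhatt} is immediate once finiteness of $A$ is invoked to guarantee torsion coefficients.
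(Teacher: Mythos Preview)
Your argument is correct and follows essentially the same approach as the paper: reduce to affine $X$, identify the restriction of $\Delta^{\et}\left(K(A,n)\right)$ to $\Sch/X$ with $K\left(\tau^*\underline{A}_X,n\right)$, and invoke Lemma~\ref{lem:Bhatt} using that $A$ is finite (hence torsion). The only difference is in how the identification is justified: the paper simply applies \cite[Remark 6.5.1.4]{htt} to commute the pullback $g^*$ with $K(\blank,n)$ and then tacitly uses that the constant sheaf on the big site is $\tau^*$ of the constant sheaf on the small site, whereas you take a slightly longer detour through the shape description $\Gamma_{Y_{\et}}\circ\Delta_{Y_{\et}}$ to reach the same formula.
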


\begin{proof}
First note that $\Delta^{\et}\left(K\left(A,n\right)\right)$ satisfies fpqc descent on $\Sch_S$ if and only if for all $g:X \to S$ with $X$ affine, $g^*\Delta^{\et}\left(K\left(A,n\right)\right)$ satisfies fpqc descent on $\Sch_S/X\simeq \Sch/X.$ But $$g^*\Delta^{\et}\left(K\left(A,n\right)\right) \simeq K\left(g^*\Delta^{\et}\left(A\right),n\right)$$ by \cite[Remark 6.5.1.4]{htt}, so we are done by Lemma \ref{lem:Bhatt}.
\end{proof}

Denote by $\Spc^\times$ the maximal Kan subcomplex of the $\i$-category $\Spc,$ i.e. the (large) $\i$-groupoid of spaces and equivalences. Fix an abelian group $A.$ Denote by $B\Aut\left(K\left(A,n\right)\right)$ the full subcategory of $\Spc^\times$ spanned by the single object $K\left(A,n\right).$ This is a small $\i$-groupoid, and hence canonically identified with a space in $\Spc.$ (Concretely it is the space of self homotopy equivalences of $K\left(A,n\right).$)

\begin{lemma} \label{lem:BAut}
For any finite abelian group $A,$ $\Delta^{\et}\left(B\Aut\left(K\left(A,n\right)\right)\right)$ satisfies fpqc descent.
\end{lemma}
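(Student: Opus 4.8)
The goal is to show that $\Delta^{\et}\left(B\Aut\left(K\left(A,n\right)\right)\right)$ satisfies fpqc descent, knowing already (previous corollary) that $\Delta^{\et}\left(K\left(A,n\right)\right)$ does. The plan is to exploit the fact that $B\Aut\left(K\left(A,n\right)\right)$ is the classifying space of the group-like $\mathbb{E}_1$-space $\Aut\left(K\left(A,n\right)\right)$ of self-equivalences, and to realize it as a quotient of a point by this group. More precisely, I would first recall that there is a principal fibration $$\Aut\left(K\left(A,n\right)\right) \to \ast \to B\Aut\left(K\left(A,n\right)\right),$$ so that $B\Aut\left(K\left(A,n\right)\right) \simeq \ast /\! / \Aut\left(K\left(A,n\right)\right)$, the bar construction; applying $\Delta^{\et}$, which is a left-exact left adjoint and in particular preserves finite products and geometric realizations of the relevant simplicial objects, produces a presentation of $\Delta^{\et}\left(B\Aut\left(K\left(A,n\right)\right)\right)$ as the geometric realization of the simplicial Betti stack $\left[p\right] \mapsto \Delta^{\et}\left(\Aut\left(K\left(A,n\right)\right)\right)^{\times p}$.

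Next I would analyze the homotopy type of $\Aut\left(K\left(A,n\right)\right)$ itself. Since $A$ is finite, $\Aut\left(K\left(A,n\right)\right)$ is a $\pi$-finite space: $\pi_0$ is the finite group $\Aut(A)$, and the identity component is $\mathbb{M}\mathrm{ap}_\ast\left(K\left(A,n\right),K\left(A,n\right)\right)_0 \simeq \prod_{i} K\left(H^{n-i}\left(K\left(A,n\right);A\right), i\right)$, whose homotopy groups are the cohomology groups of an Eilenberg–MacLane space with finite coefficients, hence finite, and vanishing above degree $n$. Thus $\Aut\left(K\left(A,n\right)\right)$ has a finite Postnikov tower with finite homotopy groups, built out of Eilenberg–MacLane pieces $K\left(B,m\right)$ for various finite abelian $B$ (its $k$-invariants assembled in the usual way), together with the finite $\pi_0$. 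I would then bootstrap from the preceding corollary: $\Delta^{\et}$ of each $K\left(B,m\right)$ satisfies fpqc descent, $\Delta^{\et}\left(BG\right)$ for finite $G$ satisfies fpqc descent by Proposition \ref{prop:Gfin}, and the class of stacks satisfying fpqc descent is closed under finite limits (in particular under the pullbacks that build Postnikov towers and under the extensions $\cX \to \tau_{\le m-1}\cX$ with fiber a $K(B,m)$-torsor). Hence $\Delta^{\et}\left(\Aut\left(K\left(A,n\right)\right)\right)$ satisfies fpqc descent. Finally, since the simplicial Betti stack presenting $\Delta^{\et}\left(B\Aut\left(K\left(A,n\right)\right)\right)$ has each level satisfying fpqc descent (finite products of fpqc sheaves are fpqc sheaves) and fpqc sheaves of spaces are closed under colimits, the geometric realization $\Delta^{\et}\left(B\Aut\left(K\left(A,n\right)\right)\right)$ also satisfies fpqc descent.

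\textbf{Main obstacle.} The subtle point is the interaction of $\Delta^{\et}$ with the bar construction and with Postnikov towers: $\Delta^{\et}$ is defined as sheafification of the constant presheaf and so commutes with all colimits, but one must be careful that the fpqc-descent property of a colimit of fpqc sheaves is automatic only because fpqc sheaves form an $\infty$-topos (a left-exact localization of presheaves, hence closed under arbitrary colimits) — and dually that the stacks in question genuinely are fpqc sheaves after sheafification in the \'etale topology, which requires knowing the \'etale-sheafified constant stack is already an fpqc sheaf. This is precisely what the preceding lemmas establish for the building blocks, so the argument is a matter of assembling them correctly via the finite Postnikov decomposition of $\Aut\left(K\left(A,n\right)\right)$ and checking that the relevant fiber sequences are finite limits of objects already known to satisfy descent. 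An alternative, perhaps cleaner route that sidesteps the bar construction: observe directly that $B\Aut\left(K\left(A,n\right)\right)$ is itself $\pi$-finite (it has finite $\pi_1 = \Aut(A)$ and finite higher homotopy groups shifted up by one from those of the identity component of $\Aut\left(K\left(A,n\right)\right)$, all vanishing above degree $n+1$), so it too admits a finite Postnikov tower with finite pieces, and one can apply the same closure-under-finite-limits argument to it directly, using Proposition \ref{prop:Gfin} at the bottom stage and the preceding corollary for each Eilenberg–MacLane layer.
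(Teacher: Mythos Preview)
There is a genuine gap in both routes you sketch.

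\textbf{Bar construction route.} Your key step is: each level $\Delta^{\et}\!\left(\Aut(K(A,n))\right)^{\times p}$ is an fpqc sheaf, and ``fpqc sheaves form an $\infty$-topos (a left-exact localization of presheaves, hence closed under arbitrary colimits)'', so the geometric realization is an fpqc sheaf. This inference is wrong. A reflective subcategory is closed under \emph{limits} computed in the ambient category, not colimits; colimits in the subcategory are the reflector applied to the ambient colimit. Here the geometric realization you care about is computed in the \'etale $\infty$-topos (that is what $\Delta^{\et}$ of the bar construction gives you), and there is no reason for an \'etale colimit of fpqc sheaves to remain an fpqc sheaf. So the argument breaks at exactly the point you flagged as the ``main obstacle'', and the resolution you propose does not resolve it.

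\textbf{Postnikov alternative.} Your second route assumes the Postnikov tower of $B\Aut(K(A,n))$ can be built from iterated pullbacks along $k$-invariant maps into \emph{untwisted} Eilenberg--MacLane spaces $K(B,m+1)$, so that ``the preceding corollary'' plus closure under finite limits suffices. That description of the Postnikov tower is only valid for simple spaces. Here $\pi_1\bigl(B\Aut(K(A,n))\bigr)=\Aut(A)$ acts nontrivially on the fiber $K(A,n+1)$, so the space is not simple. In the non-simple case, the correct pullback square at each stage has $B\Aut(K(\pi_m,m))$ in the bottom-right corner---precisely the object whose fpqc descent you are trying to establish---so the argument is circular. (This is in fact exactly how the paper's Proposition~\ref{prop:connV} is organized, and why it logically \emph{depends} on Lemma~\ref{lem:BAut}.)

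\textbf{What the paper does instead.} The paper avoids both issues by using the single fibration
\[
K(A,n+1)\;\longrightarrow\;B\Aut(K(A,n))\;\stackrel{\psi}{\longrightarrow}\;B\Aut(A),
\]
which identifies $B\Aut(A)$ with the $1$-truncation. The base is handled by Proposition~\ref{prop:Gfin}. One then compares, for each affine $X$ and each $\omega\colon X\to \Delta^{\et}(B\Aut(A))$, the \'etale and fpqc fibers of $\Map(X,\Delta^{\et}(\psi))$. The point is that $\omega$ classifies a locally constant torsion abelian sheaf $\cF_\omega$ on the small \'etale site of $X$, and the fiber over $\omega$ is $\Gamma_X\bigl(K(\cF_\omega,n+1)\bigr)$; the comparison then follows from Lemma~\ref{lem:Bhatt}, which is stated for \emph{arbitrary} torsion abelian sheaves on $X_{\et}$, not just constant ones. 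In other words, the paper absorbs the $\pi_1$-twist into the coefficients and invokes the (already established) fpqc descent for $K(\tau^*\cF,n)$ with twisted $\cF$. Your proposal never accesses that twisted version, which is why it cannot close the argument.
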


\begin{proof}
Fix a cardinality bound on $\Sch$ so that we can work within $\i$-topoi and so that fpqc-sheafification exists. For any \'etale sheaf $H,$ denote by $aH$ its fpqc-sheafification, viewed as an object of $\Sh\left(\Sch,\et\right).$ It suffices to prove that for all affine schemes, the canonical map $$\Map\left(X,\Delta^{\et}\left(B\Aut\left(K\left(A,n\right)\right)\right)\right) \to \Map\left(X,a\Delta^{\et}\left(B\Aut\left(K\left(A,n\right)\right)\right)\right)$$ is an equivalence. Note that by \cite[p. 31]{etalehomotopy}, it follows that there is a canonical map $$\psi:B\Aut\left(K\left(A,n\right)\right) \to B\Aut\left(A\right)$$ identifying $B\Aut\left(A\right)$ with the $1$-truncation. Notice that by Proposition \ref{prop:Gfin}, we have that $$a\Delta^{\et}\left(B\Aut\left(A\right)\right) \simeq \Delta^{\et}\left(B\Aut\left(A\right)\right).$$ Therefore, it suffices to prove that for any $$\omega:* \to \Map\left(X,\Delta^{\et}\left(B\Aut\left(A\right)\right)\right) \simeq \Map\left(X,a\Delta^{\et}\left(B\Aut\left(A\right)\right)\right),$$ the induced map between the homotopy fibers of $\Map\left(X,\Delta^{\et}\left(\psi\right)\right)$ and $\Map\left(X,a\Delta^{\et}\left(\psi\right)\right)$ is an equivalence. Denote the homotopy fiber of the former by $F^{\et}_\omega,$ and the latter by $F^{\mbox{fpqc}}_{\omega}.$ By \cite[Proposition 5.5.5.12]{htt}, we have a canonical identification
$$F^{\et}_\omega \simeq \Map_{\Sh\left(\Sch,\et\right)/\Delta^{\et}\left(B\Aut\left(A\right)\right)}\left(\omega,\Delta^{\et}\left(\psi\right)\right).$$ 
Consider the map $$\omega:X \to \Delta^{\et}\left(B\Aut\left(A\right)\right).$$ It classifies a locally constant \'etale sheaf $\cF_{\omega}$ of abelian groups on $X$ with coefficients in $A.$ Explicitly, the canonical functor $$B\Aut\left(A\right) \to \mathbf{Ab}$$ corresponds to an abelian sheaf $\cF_A$ on $\Spc/B\Aut\left(A\right)$ and $\omega$ corresponds to a geometric morphism $$\overline{\omega}:\Sh\left(\Sch,\et\right)/X \to \Spc/B\Aut\left(A\right),$$ and $$\cF_{\omega}=\overline{\omega}^*\cF_A.$$ Moreover, by \cite[Theorem 2.40]{etalehomotopy}, it follows that 
$$\cF_{\omega} \simeq \tau^*\cF_{\omega'},$$ for $\cF_{\omega'}$ an abelian sheaf on the small \'etale site of $X.$ By the proof of \cite[Proposition 4.11]{etalehomotopy}, we have a canonical identification $$F^{\et}_\omega \simeq \Gamma_X\left(K\left(\cF_{\omega},n+1\right)\right).$$ By an analogous argument, we have $$F^{\mbox{fpqc}}_\omega \simeq \Gamma_X\left(aK\left(\cF_{\omega},n+1\right)\right).$$ Since $\cF_{\omega}$ is the pull back from the small \'etale topos of a torsion abelian sheaf, the result now follows from Lemma \ref{lem:Bhatt}.
\end{proof}

\begin{proposition}\label{prop:connV}
For any connected $\pi$-finite space $V,$ the Betti stack $\Delta^{\et}\left(V\right)$ satisfies fpqc descent.
\end{proposition}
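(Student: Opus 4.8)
The plan is to induct on the number of nonzero homotopy groups of $V$, reducing at each stage to Betti stacks already known to satisfy fpqc descent. Two formal facts make this possible: first, the constant-stack functor $\Delta^{\et}$ is left exact, being the sheafification (which is left exact) of the constant-presheaf functor $\Spc \to \Psh\left(\Aff'\right)$, which preserves all limits (cf. also \cite[Remark 6.5.1.4]{htt}); second, the full subcategory of fpqc sheaves inside presheaves is closed under limits, and limits of \'etale sheaves are computed as limits of presheaves. Consequently, if $V$ can be written as a finite limit of spaces $V_i$ for which each $\Delta^{\et}\left(V_i\right)$ satisfies fpqc descent, then so does $\Delta^{\et}\left(V\right)$, and the whole argument is reduced to producing such a decomposition.

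For the base cases: if $V \simeq *$ then $\Delta^{\et}\left(V\right)$ is terminal; if $V \simeq BG$ with $G$ a finite group, apply Proposition \ref{prop:Gfin}; and if $V \simeq K\left(A,n\right)$ with $n \ge 2$ and $A$ a finite abelian group, apply the Corollary to Lemma \ref{lem:Bhatt}. For the inductive step, let $n$ be the largest degree with $A \defeq \pi_n V \neq 0$; since we may assume $V$ has at least two nonzero homotopy groups we have $n \ge 2$, so $A$ is a finite abelian group carrying an action of $\pi_1 V$. Put $W \defeq \tau_{\le n-1} V$; it is connected and $\pi$-finite with one fewer nonzero homotopy group, so $\Delta^{\et}\left(W\right)$ satisfies fpqc descent by the inductive hypothesis. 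Now I would use Postnikov theory together with the identification (already exploited in Lemma \ref{lem:BAut}) of $B\Aut\left(A\right)$ with the $1$-truncation of $B\Aut\left(K\left(A,n\right)\right)$, and the section $s\colon B\Aut\left(A\right) \to B\Aut\left(K\left(A,n\right)\right)$ coming from functoriality of the Eilenberg--MacLane construction, to exhibit $V$ as the homotopy pullback
$$V \;\simeq\; W \times_{B\Aut\left(K\left(A,n\right)\right)} B\Aut\left(A\right),$$
where $W \to B\Aut\left(K\left(A,n\right)\right)$ is the $k$-invariant of $V$ (a lift of the classifying map $W \to B\Aut\left(A\right)$ of the local system $A$) and $B\Aut\left(A\right) \to B\Aut\left(K\left(A,n\right)\right)$ is $s$; a routine check on homotopy groups confirms the right-hand side has the correct $\pi_j$ in every degree.

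Applying the left-exact functor $\Delta^{\et}$ then yields
$$\Delta^{\et}\left(V\right) \;\simeq\; \Delta^{\et}\left(W\right) \times_{\Delta^{\et}\left(B\Aut\left(K\left(A,n\right)\right)\right)} \Delta^{\et}\left(B\Aut\left(A\right)\right),$$
and each Betti stack on the right satisfies fpqc descent: $\Delta^{\et}\left(W\right)$ by induction, $\Delta^{\et}\left(B\Aut\left(K\left(A,n\right)\right)\right)$ by Lemma \ref{lem:BAut}, and $\Delta^{\et}\left(B\Aut\left(A\right)\right)$ by Proposition \ref{prop:Gfin} since $\Aut\left(A\right)$ is a finite group. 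Hence, by the two formal facts above, $\Delta^{\et}\left(V\right)$ satisfies fpqc descent, completing the induction. The one genuine subtlety — and precisely the reason Lemma \ref{lem:BAut} was established beforehand — is the $\pi_1 V$-action on $A$: it forces $V$ to be realized as a pullback over the \emph{twisted} object $B\Aut\left(K\left(A,n\right)\right)$ rather than over an ordinary Eilenberg--MacLane space $K\left(A,n+1\right)$; once this twisting is absorbed into Lemma \ref{lem:BAut}, the remainder of the argument is purely formal manipulation of finite limits of sheaves.
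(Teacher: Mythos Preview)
Your proposal is correct and follows essentially the same approach as the paper: both argue by induction on the truncation level, invoke the Cartesian square
\[
V \simeq \tau_{\le n-1}V \times_{B\Aut\left(K\left(\pi_n V,n\right)\right)} B\Aut\left(\pi_n V\right),
\]
apply left exactness of $\Delta^{\et}$, and conclude via the inductive hypothesis together with Proposition~\ref{prop:Gfin} and Lemma~\ref{lem:BAut}. Your write-up is in fact somewhat more detailed than the paper's, spelling out the role of the $k$-invariant and the section $s$ where the paper simply asserts the square is Cartesian.
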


\begin{proof}
We prove this by induction on homotopy dimension. Suppose that the Betti stack of all connected $\left(n-1\right)$-truncated $\pi$-finite spaces satisfies fpqc descent. Let $V$ be an $n$-truncated connected $\pi$-finite space. Then there is a Cartesian diagram in $\Spc$
$$\xymatrix{V  \ar[d] \ar[r] & B\Aut\left(\pi_n\left(V\right)\right) \ar[d]\\
V_{n-1} \ar[r] & B\Aut\left(K\left(\pi_n\left(V\right),n\right)\right)}$$
where $V_{n-1}$ is the $\left(n-1\right)$-truncation of $V.$ Since $\Delta^{\et}$ preserves finite limits, we have $$\Delta^{\et}\left(V\right) \simeq \Delta^{\et}\left(V_{n-1}\right) \times_{\Delta^{\et}\left(B\Aut\left(K\left(\pi_n\left(V\right)\right)\right)\right)} \Delta^{\et}\left(B\Aut\left(\pi_n\left(V\right)\right)\right).$$ Since fpqc-sheaves are stable under limits, the result now follows from the inductive hypothesis, Proposition \ref{prop:Gfin}, and Lemma \ref{lem:BAut}.
\end{proof}

\begin{definition}
Let $\sC$ denote the following Grothendieck pre-topology on $\Sch.$ A finite family of maps $$\left(f_i:X_i \to X\right)_{i=1}^{n}$$ is a \textbf{$\sC$-cover} if the canonical map $$\underset{i}\coprod X_i \to X$$ is an isomorphism. We call the corresponding Grothendieck topology the \textbf{coproduct topology}. Denote the corresponding sheafification functor by $c.$
\end{definition}

\begin{remark}\label{rmk:clopen}
Note that every $\sC$-cover can be written as a finite composition of $2$-term $\sC$-covers. Moreover, by \cite[TAG 02WN]{stacks-project}, every $2$-term $\sC$-cover is isomorphic to the inclusion of two complimentary closed and open subschemes.
\end{remark}

\begin{lemma}\label{lem:coprodsh1}
Let $J$ be a Grothendieck topology on $\Sch$ for which every $\sC$-cover is a cover, and for which open closed inclusions have $J$-descent. Let $F_1$ and $F_2$ be two $J$-sheaves of spaces. Then the coproduct of $F_1$ and $F_2$ as $\sC$-sheaves and as $J$-sheaves coincide.
\end{lemma}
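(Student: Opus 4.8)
The plan is to reduce the statement to a computation of mapping spaces and then use the explicit structure of $\sC$-sheafification. Recall that for a presheaf of spaces $F$ on $\Sch$, the coproduct $F_1 \sqcup F_2$ in $J$-sheaves is computed by $J$-sheafifying the objectwise coproduct of presheaves, and similarly for $\sC$-sheaves; so what must be shown is that the presheaf $\sC$-sheafification of $F_1 \amalg F_2$ (objectwise coproduct of presheaves) is already a $J$-sheaf. Since both $F_1$ and $F_2$ are assumed to be $J$-sheaves, hence in particular $\sC$-sheaves, the first step is to give a concrete description of the $\sC$-sheafification of the objectwise coproduct $F_1 \amalg F_2$.

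First I would invoke Remark \ref{rmk:clopen}: every $\sC$-cover is a finite composite of $2$-term $\sC$-covers, and every $2$-term $\sC$-cover is the inclusion of two complementary open-closed subschemes $X \cong U \amalg V$. A presheaf $G$ is a $\sC$-sheaf precisely when $G(\emptyset) \simeq *$ and for each such decomposition the canonical map $G(X) \to G(U) \times G(V)$ is an equivalence; equivalently, $G$ sends finite coproducts of schemes to finite products of spaces. Thus the $\sC$-sheafification $c(F_1 \amalg F_2)$ has the explicit description
\[
c(F_1 \amalg F_2)(X) \;\simeq\; \underset{X \cong \coprod_{k} X_k}{\colim}\ \prod_{k}\, \bigl(F_1(X_k) \amalg F_2(X_k)\bigr),
\]
where the colimit is over all finite decompositions of $X$ into open-closed pieces, ordered by refinement. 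Because any two such decompositions have a common refinement, this colimit is filtered; and since a scheme has only finitely many idempotents in its ring of global sections locally — more precisely, since in the filtered system the decompositions stabilize once the pieces are connected (or at least have connected spectrum in the relevant sense) — one sees that $c(F_1 \amalg F_2)(X) \simeq \coprod_{\alpha} (F_1 \times F_2)(\text{pieces})$: concretely, on a scheme $X$ with a clopen decomposition $X = \coprod_i X_i$, the value is $\coprod$ over functions $\epsilon: \{i\} \to \{1,2\}$ of $\prod_i F_{\epsilon(i)}(X_i)$.

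Then I would check directly that this explicit presheaf is a $J$-sheaf. The key point is that $J$-descent for a cover $\{Y_j \to X\}$ can be tested after first passing to the open-closed pieces of $X$ (since open-closed inclusions have $J$-descent by hypothesis, and a scheme is covered, in the $J$-topology, by the refinement over which $c(F_1\amalg F_2)$ has stabilized), and on each piece the presheaf agrees with either $F_1$ or $F_2$ (up to the finitely many ways of distributing the labels), which are $J$-sheaves by assumption. More carefully: given a $J$-cover $\{f_j : Y_j \to X\}$, I would pull back the stabilizing clopen decomposition of $X$ to each $Y_j$ and to the fiber products $Y_j \times_X Y_{j'}$, use that clopen inclusions are monomorphisms stable under pullback so these decompositions are compatible, and reduce the descent object to a product over the labelings of the descent objects for $F_1$ and $F_2$ along the induced covers; each of these is an equivalence since $F_1, F_2$ are $J$-sheaves. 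Hence $c(F_1 \amalg F_2)$ is a $J$-sheaf, so it equals its own $J$-sheafification, which is by definition $F_1 \sqcup F_2$ in $J$-sheaves; and it is tautologically the coproduct in $\sC$-sheaves.

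The main obstacle I anticipate is making rigorous the claim that the filtered colimit over clopen refinements "stabilizes" well enough to commute with the finite products and with the formation of the relevant limits appearing in $J$-descent — in other words, controlling the interaction of an infinite filtered colimit with the (possibly infinite) limits in the descent condition. The clean way around this is to observe that for the coproduct topology the sheafification is computed by a single-step colimit that is \emph{filtered}, that filtered colimits of spaces commute with finite limits, and that a $J$-cover of $X$ can be refined so that the clopen decomposition of $X$ is pulled back compatibly to all the $Y_j$ and their fiber products — so the descent limit, restricted to the stabilized part, becomes a finite limit on each clopen piece, against which the filtered colimit does commute. One should also take care that $F_1$ and $F_2$ are $\sC$-sheaves to begin with (which they are, being $J$-sheaves), so that the labels are genuinely "locally constant" and the colimit description above is valid.
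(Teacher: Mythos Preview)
Your overall strategy is correct and matches the paper's: both reduce to showing that the $\sC$-sheafification $c(F_1\amalg F_2)$ is already a $J$-sheaf, and both give an explicit formula for it in terms of clopen decompositions. The paper writes this as $(F_1\amalg F_2)^+(X)\simeq \coprod_{Z\in OpCl(X)} F_1(Z)\times F_2(X\setminus Z)$, which is a repackaging of your filtered colimit using that $F_1,F_2$ are already $\sC$-sheaves. So the formula is fine.

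Where your argument goes wrong is in the verification of the $J$-sheaf condition. You propose to \emph{start} from a clopen decomposition of $X$, pull it back to the members $Y_j$ of a $J$-cover, and then reduce to descent for $F_1$ and $F_2$. But this is the wrong direction. To check that $G:=c(F_1\amalg F_2)$ satisfies descent, you must start from an element of the totalization $\lim_{\Delta} G(Y_\bullet)$: that means a compatible family where each $s_j$ lies in the summand indexed by some clopen $Z_j\subset Y_j$, with no a priori reason for the $Z_j$ to come from a clopen of $X$. The $Y_j$ may also have strictly finer clopen decompositions than anything pulled back from $X$, so your ``stabilization'' picture breaks down; and since the totalization is an \emph{infinite} limit, the appeal to ``filtered colimits commute with finite limits'' does not apply.

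The hypothesis you have not really used is precisely what closes this gap: ``open-closed inclusions have $J$-descent'' means the presheaf $OpCl$ is a $J$-sheaf. The paper uses this to show that the compatible family $(Z_j)_j$ glues to a \emph{unique} clopen $Z\subset X$ with $f_j^{-1}(Z)=Z_j$, which lets one rewrite the totalization as
\[
\coprod_{Z\in OpCl(X)}\Bigl(\lim_{\Delta} F_1\bigl(f_\bullet^{-1}(Z)\bigr)\times \lim_{\Delta} F_2\bigl(f_\bullet^{-1}(X\setminus Z)\bigr)\Bigr),
\]
after which descent for $F_1$ and $F_2$ finishes the argument. In other words, the key step is commuting the coproduct over clopens past the cosimplicial limit by \emph{descending the indexing set itself}, not by any filtered-colimit manipulation.
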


\begin{proof}
Since any $J$-sheaf is a $\sC$-sheaf, if $F_1 \coprod F_2$ denotes the coproduct in presheaves, it suffices to prove that the $\sC$-sheafification $c\left(F_1 \coprod F_2\right)$ is a $J$-sheaf. Since sheafification is a transfinite composition of the plus-construction, it suffices to prove that the $\sC$-plus construction $\left(F_1 \coprod F_2\right)^+$ is a $J$-sheaf (since then it is also a $c$-sheaf and coincides with the $c$-sheafification). Consider the $J$-sheaf $OpCl$ which assigns $X$ the set of clopen subschemes of $X.$ By Remark \ref{rmk:clopen}, we have a canonical identification
$$\left(F_1 \coprod F_2\right)^+\left(X\right) \simeq \underset{Z \in OpCl\left(X\right)} \coprod \left(F_1\left(Z\right) \coprod F_2\left(X-Z\right)\right).$$ Suppose that $\left(f_\alpha:U_\alpha \to U\right)_\alpha$ is a $J$-cover of a scheme $U.$ It suffices to show that the canonical map
$$\left(F_1 \coprod F_2\right)^+\left(U\right) \to \lim \left[\underset{\alpha}\prod \left(F_1 \coprod F_2\right)^+\left(U_\alpha\right) \rightrightarrows \underset{\alpha,\beta} \prod \left(F_1 \coprod F_2\right)^+\left(U_{\alpha\beta}\right) \rrrarrow \cdots\right]$$ is an equivalence.

Note that we have a canonical equivalence between 
$$
\mathcal{L}_1:= \lim \left[\underset{\alpha}\prod \left(F_1 \coprod F_2\right)^+\left(U_\alpha\right) \rightrightarrows \underset{\alpha,\beta} \prod \left(F_1 \coprod F_2\right)^+\left(U_{\alpha\beta}\right) \rrrarrow \cdots\right]
$$
and 
\[
\resizebox{\displaywidth}{!}{\xymatrix{
\mathcal{L}_2 := \lim \left[\underset{\alpha}\prod \left(\underset{Z \in OpCl\left(U_\alpha\right)} \coprod  F_1\left(Z\right) \coprod F_2\left(U_\alpha-Z \right)\right) 
\rightrightarrows \underset{\alpha,\beta} \prod \left(
 \underset{Z \in OpCl\left(U_{\alpha\beta}\right)} \coprod 
 F_1 \left(Z \right) \coprod F_2\left(U_{\alpha\beta}-Z  \right)
\right) 
\rrrarrow \cdots
\right]. 
}}
\] 
Further we have a canonical map 
%
$$
\mathcal{L}_2 \to \underset{\alpha}\prod \left(\underset{Z \in OpCl\left(U_\alpha\right)} \coprod F_1\left(Z \right) \coprod F_2\left(U_\alpha-Z \right)\right)
$$
By composition, we obtain a map
$$
\mathcal{L}_1 \stackrel{\simeq}{\longrightarrow} \mathcal{L}_2 \to 
\underset{\alpha}\prod \left(\underset{Z \in OpCl\left(U_\alpha\right)} \coprod F_1\left(Z \right) \coprod F_2\left(U_\alpha-Z \right)\right)
$$
If $\left(s_\alpha\right)_\alpha$ is an object in the image of the map, without loss of generality we have that there is $\alpha$-indexed set $\left(Z_\alpha \subset U_\alpha\right)_\alpha$ of clopen subschemes such that $s_\alpha \in F_1\left(Z_\alpha\right)$ for all $\alpha,$ and moreover, we have that for each projection $$pr_{\alpha}:U_{\alpha\beta} \to U_{\alpha},$$ for each $\alpha$ and $\beta$ $$pr_{\alpha}^{-1}\left(Z_{\alpha}\right)=pr_{\beta}^{-1}\left(Z_{\beta}\right).$$ Since $OpCl$ is a $J$-sheaf, there is a unique clopen subscheme $Z \subset U$ such that $f_\alpha^{-1}\left(Z\right)=Z_\alpha$ for all $\alpha.$ It follows that we have a canonical equivalence between $\mathcal{L}_2$ and 
\[\resizebox{\displaywidth}{!}
{\xymatrix{ \underset{Z \in OpCl \left(U \right) } 
\coprod  
\left(\lim \left[ \underset{\alpha} \prod F_1\left(f_\alpha^{-1}\left(Z\right)\right) \rightrightarrows \underset{\alpha,\beta} \prod F_1\left(f_{\alpha\beta}^{-1}\left(Z\right)\right) \rrrarrow \cdots \right] 
\coprod \lim \left[ \underset{\alpha} \prod F_2\left(f_\alpha^{-1}\left(U-Z\right)\right)
 \rightrightarrows \underset{\alpha,\beta} \prod 
F_2\left(f_{\alpha\beta}^{-1}\left(U-Z\right)\right) \rrrarrow \cdots 
\right]
\right). 
}}
\]
Notice that $\left(f_\alpha^{-1}\left(Z\right) \to Z\right)_\alpha$ and $\left(f_\alpha^{-1}\left(U-Z\right) \to U-Z\right)_\alpha$ are $J$-covers of $Z$ and $U-Z$ respectively. Since $F_1$ and $F_2$ are $J$-sheaves, we finally have a canonical equivalence between 
\[\resizebox{\displaywidth}{!}{\xymatrix{
\underset{Z \in OpCl\left(U\right)} \coprod \left(\lim \left[ \underset{\alpha} \prod F_1\left(f_\alpha^{-1}\left(Z\right)\right) \rightrightarrows \underset{\alpha,\beta} \prod F_1\left(f_{\alpha\beta}^{-1}\left(Z\right)\right) \rrrarrow \cdots \right] \coprod \lim \left[ \underset{\alpha} \prod F_2\left(f_\alpha^{-1}\left(U-Z\right)\right) \rightrightarrows \underset{\alpha,\beta} \prod F_2\left(f_{\alpha\beta}^{-1}\left(U-Z\right)\right) \rrrarrow \cdots \right]\right) 
}}\]
and 
$$
\underset{Z \in OpCl\left(U\right)} \coprod \left(F_1\left(Z\right) \coprod F_2\left(U-Z\right)\right) = 
\left(F_1 \coprod F_2\right)^+\left(U\right).
$$
\end{proof}

\begin{corollary}\label{cor:coprodsh2}
If $F_1$ and $F_2$ are fpqc sheaves of spaces on $\Sch,$ then their coproduct as fpqc sheaves and \'etale sheaves coincide.
\end{corollary}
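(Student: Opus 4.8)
The plan is to deduce this immediately from Lemma \ref{lem:coprodsh1} by applying it to two different topologies on $\Sch$: the fpqc topology and the \'etale topology. First I would note that $F_1$ and $F_2$, being fpqc sheaves of spaces, are in particular \'etale sheaves, so it makes sense to form their coproduct in $\Sh\left(\Sch,\mbox{fpqc}\right)$, in $\Sh\left(\Sch,\et\right)$, and in $\sC$-sheaves, and that the respective sheafification functors are compatible in the sense needed to even ask the question.

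The one genuine thing to verify is that both the fpqc and the \'etale topologies satisfy the two hypotheses of Lemma \ref{lem:coprodsh1}. That every $\sC$-cover is a cover for each of these topologies is clear: a finite family $\left(X_i \to X\right)$ whose total map $\coprod_i X_i \to X$ is an isomorphism is an \'etale cover, hence an fpqc cover. That open-closed inclusions satisfy descent is standard in both cases: for the \'etale topology a clopen decomposition $X = Z \sqcup \left(X - Z\right)$ is literally an \'etale cover, and for the fpqc topology it follows from faithfully flat descent along $Z \sqcup \left(X-Z\right) \to X$ (equivalently, it is a special case of the $v$-descent statement of \cite[Proposition 5.1]{BhattMathew} invoked in the proof of Lemma \ref{lem:Bhatt}, or of the quasi-affine diagonal argument behind Proposition \ref{prop:Gfin}). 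I would record this explicitly, since it is the only non-formal input.

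With these hypotheses checked, Lemma \ref{lem:coprodsh1} applied with $J$ the fpqc topology shows that the coproduct of $F_1$ and $F_2$ computed in $\sC$-sheaves agrees with the coproduct computed in fpqc sheaves, and the same lemma applied with $J$ the \'etale topology shows that the $\sC$-sheaf coproduct agrees with the \'etale-sheaf coproduct. Composing these two canonical identifications gives the desired equivalence. I do not expect any real obstacle beyond being careful about the (standard) fpqc descent for clopen inclusions; everything else is bookkeeping.
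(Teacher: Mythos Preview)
Your proposal is correct and matches the paper's approach: the paper's one-line proof likewise deduces the result from Lemma~\ref{lem:coprodsh1}, noting that $OpCl$ is an fpqc sheaf and that every $\sC$-cover is an \'etale cover. One small imprecision: saying ``a clopen decomposition is literally an \'etale cover'' is not the same as verifying that $OpCl$ is an \'etale sheaf (the actual hypothesis used in the proof of Lemma~\ref{lem:coprodsh1}), but since you correctly argue that $OpCl$ is an fpqc sheaf, and this already implies it is an \'etale sheaf, the argument goes through.
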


\begin{proof}
Since $OpCl$ is an fpqc sheaf, and every $\sC$ cover is an \'etale cover, this follows immediately from Lemma \ref{lem:coprodsh1}.
\end{proof}

\begin{theorem}\label{thm:Betti-fpqc}
Let $V$ be a $\pi$-finite space. Then its Betti stack $\Delta^{\et}\left(V\right)$ satisfies fpqc descent.
\end{theorem}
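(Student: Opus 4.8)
The plan is to reduce the general case to the connected case already handled in Proposition~\ref{prop:connV}, the only new ingredient being bookkeeping about coproducts, which is precisely what Corollary~\ref{cor:coprodsh2} provides. Write $V \simeq \coprod_{i=1}^{k} V_i$ as the (finite, possibly empty) coproduct of its connected components; each $V_i$ is again $\pi$-finite and connected. Since $\Delta^{\et}\colon \Spc \to \Sh\left(\Aff',\et\right)$ is left adjoint to the global sections functor, it preserves all colimits, so in the $\i$-topos $\Sh\left(\Aff',\et\right)$ we have $\Delta^{\et}\left(V\right) \simeq \coprod_{i=1}^{k} \Delta^{\et}\left(V_i\right)$, the coproduct taken among \'etale sheaves. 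By Proposition~\ref{prop:connV}, each $\Delta^{\et}\left(V_i\right)$ is an fpqc sheaf.

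Now I would argue by induction on $k$ that this \'etale-sheaf coproduct is in fact an fpqc sheaf. For $k=0$ the coproduct is the initial sheaf, which is the same whether computed among \'etale or fpqc sheaves (it sends $\emptyset$ to $*$ and every nonempty affine to $\emptyset$), hence an fpqc sheaf. For the inductive step, assume $F := \coprod_{i=1}^{k-1}\Delta^{\et}\left(V_i\right)$ is an fpqc sheaf and coincides with the coproduct computed among fpqc sheaves. Then $\Delta^{\et}\left(V\right) \simeq F \coprod \Delta^{\et}\left(V_k\right)$ is a coproduct of two fpqc sheaves computed among \'etale sheaves, so by Corollary~\ref{cor:coprodsh2} it agrees with the coproduct computed among fpqc sheaves; in particular it is an fpqc sheaf. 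This completes the induction and the proof.

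The point worth emphasizing is that there is no real obstacle left at this stage: the genuinely delicate inputs — the fpqc descent of $BG$, of the Eilenberg--MacLane stacks $K\left(\tau^*A,n\right)$, and of $\Delta^{\et}\left(B\Aut\left(K\left(A,n\right)\right)\right)$, assembled via the Postnikov/Whitehead induction in Proposition~\ref{prop:connV}, together with the comparison of \'etale and fpqc coproducts in Corollary~\ref{cor:coprodsh2} — have already been established. The only subtlety one must be careful about here is exactly the one addressed by Corollary~\ref{cor:coprodsh2}: a priori the coproduct $\coprod_i \Delta^{\et}\left(V_i\right)$ in \'etale sheaves need not be a sheaf for the finer fpqc topology, since forming coproducts and fpqc-sheafifying need not commute; the corollary (ultimately Lemma~\ref{lem:coprodsh1} and the clopen description of the coproduct topology) is what rules this out.
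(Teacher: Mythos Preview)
Your proof is correct and follows exactly the paper's approach: decompose $V$ into its finitely many connected components, invoke Proposition~\ref{prop:connV} on each, and use Corollary~\ref{cor:coprodsh2} to conclude that the \'etale coproduct of the resulting fpqc sheaves remains an fpqc sheaf. The paper compresses this into a single sentence, whereas you spell out the (straightforward) induction needed to pass from the binary statement of Corollary~\ref{cor:coprodsh2} to arbitrary finite coproducts; there is no substantive difference.
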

\begin{proof}
Since $\Delta^{\et}\left(V\right)$ is a finite coproduct of Betti stacks of connected $\pi$-finite spaces, this follows immediately from Proposition \ref{prop:connV} and Corollary \ref{cor:coprodsh2}.
\end{proof}

\section{Shape comparison}
\subsection{To pro or not to pro}
\label{shapecomparison1}
Fix a fine saturated log scheme $X$ over a base scheme $S.$ In this section we prove that the \'etale homotopy type of the infinite root stack of a log scheme viewed as a pro-object is the same as when it is viewed as an actual fpqc-stack, after profinite completion. 

It turns out that this statement is local in the \'etale topology of $X$, so we can first assume that $X$ is affine and has a  global Kato chart  $X\to \Spec \bZ[P]$ where $P$ is a fine saturated monoid. In this case each root stack (including the limit) is a global quotient of affine schemes. 

We write $\Sch_S$ for schemes over $S.$ Recall that by an affine $S$-scheme, we mean an $S$-scheme of the form $S \times \Spec\left(A\right) \to S.$

\begin{definition}
A functor $F:\Sch_S^{op} \to \Spc$ is \textbf{limit-preserving} if for all cofiltered limits $X=\underset{n} \lim X_n$ of {affine} $S$-schemes, the canonical map $$\underset{n} \colim F\left(X_n\right) \to F\left(X\right)$$ is an equivalence.
\end{definition}

\begin{lemma}\label{lem:limp1}
Suppose $F:\Sch_S^{op} \to \Spc$ is a colimit in $\Psh\left(\Sch_S\right)$ of algebraic spaces locally of finite type over $S,$ then $F$ is limit preserving.
\end{lemma}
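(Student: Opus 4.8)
The plan is to reduce the statement to the known fact that filtered colimits of representable functors (via the Yoneda embedding into presheaves) are limit-preserving, and then to check that this property is stable under the operations that build $F$ from the given algebraic spaces. First I would recall that a scheme $T$, viewed as the representable presheaf $h_T$, is limit-preserving among affine $S$-schemes precisely when $T$ is locally of finite presentation over $S$; this is the standard characterization (e.g. \cite[IV, 8.14.2]{EGA} or \cite[TAG 01ZC]{stacks-project}), and the same statement holds for algebraic spaces locally of finite presentation over $S$. Since we are told each building block is an algebraic space \emph{locally of finite type} over $S$, and we are working over a base that will be reduced to a Noetherian (even affine, with a Kato chart) situation in the application, ``locally of finite type'' and ``locally of finite presentation'' coincide, so each such algebraic space defines a limit-preserving functor.

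Next I would show that the full subcategory of $\Psh\left(\Sch_S\right)$ spanned by limit-preserving functors is closed under arbitrary colimits. The key point is that for a fixed cofiltered system $X = \underset{n}\lim X_n$ of affine $S$-schemes, the assignment $F \mapsto \left(\underset{n}\colim F\left(X_n\right) \to F\left(X\right)\right)$ is computed levelwise, and filtered colimits of spaces commute with all colimits; more precisely, both $F \mapsto \underset{n}\colim F\left(X_n\right)$ and $F \mapsto F\left(X\right)$ are colimit-preserving functors $\Psh\left(\Sch_S\right) \to \Spc$ (evaluation at an object preserves colimits since colimits of presheaves are objectwise, and a filtered colimit of colimit-preserving functors is colimit-preserving). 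Hence if the comparison map is an equivalence on a diagram of generators, it is an equivalence on the colimit. This is really the heart of the argument.

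Combining the two steps: $F$ is by hypothesis a colimit in $\Psh\left(\Sch_S\right)$ of presheaves represented by algebraic spaces locally of finite type over $S$, each of which is limit-preserving by the first step; by the second step the colimit is again limit-preserving, which is the claim. I would also remark that one can equally cite \cite[Proposition E.1.6.4]{SAG} or the analogous statement in \cite{reletale} on ind-objects of finitely presented algebraic spaces, but the direct argument above is short enough to include.

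The main obstacle I anticipate is purely a matter of being careful about ``finite type'' versus ``finite presentation'': the clean characterization of limit-preserving representable functors requires local finite presentation, so one must either invoke Noetherianity of the base (which holds in the intended application, where $S$ is eventually taken affine with a Kato chart and everything in sight is locally Noetherian), or restrict the statement accordingly. Once that point is settled, the colimit-stability argument is formal and should present no difficulty.
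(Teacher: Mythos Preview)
Your proposal is correct and matches the paper's proof essentially verbatim: the paper cites \cite[Tag 0CMX]{stacks-project} for the fact that each algebraic space locally of finite type over $S$ is limit-preserving, and then observes that objectwise colimits of limit-preserving functors are again limit-preserving ``since colimits commute with colimits.'' Your caution about finite type versus finite presentation is well-placed---the cited Stacks Project tag in fact characterizes locally of finite \emph{presentation}, so the paper is implicitly relying on Noetherianity of the base, exactly as you anticipated.
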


\begin{proof}
By \cite[Tag 0CMX]{stacks-project}, any algebraic stack locally of finite type over $S$ is limit preserving. However, the object-wise colimit of limit-preserving functors clearly is limit preserving, since colimits commute with colimits.
\end{proof}

\begin{proposition}\label{prop:limp2}
Let $V$ be any space, then its Betti stack $\Delta^{\et}\left(V\right)$ is limit preserving.
\end{proposition}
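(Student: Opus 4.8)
The plan is to exploit the fact that the Betti stack functor preserves all small colimits, which reduces the statement immediately to Lemma \ref{lem:limp1}. Concretely, regard $\Delta^{\et}\left(V\right)$ as the constant \'etale sheaf with value $V$ on $\Sch_S$. This functor $\Delta^{\et}\colon\Spc\to\Sh\left(\Sch_S,\et\right)$ is the composite of the constant presheaf functor $\Spc\to\Psh\left(\Sch_S\right)$ with \'etale sheafification. The constant presheaf functor is left adjoint to the functor $F\mapsto \underset{\Sch_S^{op}}{\lim}F$, and sheafification is a left adjoint as well; hence $\Delta^{\et}$ preserves all small colimits.

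Next I would use that every space $V$ is canonically the colimit $V\simeq\underset{v\in V}{\colim}\,*$ of the constant diagram indexed by the $\i$-groupoid $V$ with value the one-point space: for any $W\in\Spc$ one has $\Map\left(\underset{v\in V}{\colim}\,*,\,W\right)\simeq\underset{v\in V}{\lim}W\simeq W^{V}\simeq\Map\left(V,W\right)$, so the identification follows from the Yoneda lemma. Applying $\Delta^{\et}$ and using that it commutes with colimits yields
\[
\Delta^{\et}\left(V\right)\;\simeq\;\underset{v\in V}{\colim}\,\Delta^{\et}\left(*\right).
\]
Now $\Delta^{\et}\left(*\right)$ is the terminal object of $\Sh\left(\Sch_S,\et\right)$, which is represented by $S$ itself, the identity morphism $S\to S$ being trivially locally of finite type; in particular it is an algebraic space locally of finite type over $S$. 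Thus $\Delta^{\et}\left(V\right)$ is exhibited as a colimit in $\Psh\left(\Sch_S\right)$ of algebraic spaces locally of finite type over $S$, and Lemma \ref{lem:limp1} applies to conclude that $\Delta^{\et}\left(V\right)$ is limit-preserving.

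I do not anticipate a real obstacle here: the only substantive inputs are the two bookkeeping facts that $\Delta^{\et}$ preserves colimits and that $\Delta^{\et}\left(*\right)$ is representable by an algebraic space locally of finite type over $S$, both of which are immediate. One could alternatively argue via the continuity of the small \'etale topos under cofiltered limits of quasi-compact quasi-separated schemes with affine transition maps, but the colimit-of-points argument above is cleaner and avoids any coherence or truncation hypotheses on $V$.
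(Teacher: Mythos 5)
There is a genuine gap, and it sits exactly where the real content of the proposition lies. Your identification $\Delta^{\et}\left(V\right)\simeq\underset{v\in V}\colim\,\Delta^{\et}\left(*\right)$ is correct, but that colimit is computed in the $\i$-topos $\Sh\left(\Sch_S,\et\right)$, because $\Delta^{\et}$ is a left adjoint landing in sheaves. It is therefore the \'etale \emph{sheafification} of the presheaf colimit, not a colimit in $\Psh\left(\Sch_S\right)$; the presheaf colimit $\underset{v\in V}\colim\, y\left(S\right)$, computed objectwise, is just the constant presheaf with value $V$. So the sentence ``Thus $\Delta^{\et}\left(V\right)$ is exhibited as a colimit in $\Psh\left(\Sch_S\right)$ of algebraic spaces locally of finite type over $S$'' is not true as stated, and Lemma \ref{lem:limp1}, which is explicitly about presheaf colimits (where the proof is just ``colimits commute with colimits''), does not apply directly to $\Delta^{\et}\left(V\right)$.

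What is missing is the assertion that \'etale sheafification preserves the limit-preserving property. This is not formal: it uses that the \'etale topology interacts well with cofiltered limits of affine schemes (covers of the limit descend to finite stages), and it is the one nontrivial input in the paper's own proof, supplied there by the citation \cite[TAG 049N]{stacks-project}. Once you add it, your argument closes: the constant presheaf at $V$ is limit preserving (either by Lemma \ref{lem:limp1} applied to your colimit-of-points decomposition, or trivially, since a filtered colimit of a constant diagram at $V$ is $V$), and its \'etale sheafification $\Delta^{\et}\left(V\right)$ is then limit preserving as well. The resulting proof is essentially the paper's, with your decomposition $V\simeq\underset{v\in V}\colim\,*$ playing the role of the paper's simplicial decomposition $\Delta^{\et}\left(V\right)\simeq\underset{n\in\Delta^{op}}\colim\,\Delta^{\et}\left(V^n\right)$ with $\Delta^{\et}\left(V^n\right)\cong\underset{a\in V^n}\coprod 1$.
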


\begin{proof}
Choose a simplicial set $V^\bullet:\Delta^{op} \to \Set$ corresponding to $V.$ Then $$\Delta^{\et}\left(V\right)=\underset{n \in \Delta^{op}} \colim \Delta^{\et}\left(V^n\right).$$ Moreover, we have that $$\underset{n \in \Delta^{op}} \colim \Delta^{\et}\left(V^n\right) \simeq a\left(\underset{n \in \Delta^{op}} \colim \Delta\left(V^n\right)\right),$$ where $\Delta\left(V^n\right)$ is the constant presheaf, and $a$ denotes \'etale sheafification. However, for any set $A,$ $\Delta^{\et}\left(A\right)\cong \underset{a \in A} \coprod 1$ is a scheme locally of finite type, and we can also write $$a\left(\underset{n \in \Delta^{op}} \colim \Delta\left(V^n\right)\right) \simeq a\left(\underset{n \in \Delta^{op}} \colim \Delta^{\et}\left(V^n\right)\right),$$ where the colimit is taken in presheaves. The presheaf $$\underset{n \in \Delta^{op}} \colim \Delta^{\et}\left(V^n\right)$$ is limit-preserving by Lemma \ref{lem:limp1}. Moreover, it follows from \cite[TAG 049N]{stacks-project} that \'etale sheafification preserves the property of being limit-preserving, so we are done.
\end{proof}

The following corollary is immediate:

\begin{corollary}\label{cor:etaleaffine}
Let $Z = \underset{\alpha} \lim Z_\alpha$ be a cofiltered limit of affine $S$-schemes. Then
$$\Pi^{\et}_\i\left(Z\right) \simeq \underset{\alpha} \lim \Pi^{\et}_\i\left(Z_\alpha\right).$$
\end{corollary}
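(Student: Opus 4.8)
The plan is to combine Proposition \ref{prop:limp2} with the defining property of the \'etale homotopy type as a mapping space into Betti stacks. Recall that by \cite[Theorem 2.40]{etalehomotopy}, $\Pi^{\et}_\i\left(\cX\right)$ is the functor sending a $\pi$-finite space — or more generally any space — $V$ to $\Map\left(\cX,\Delta^{\et}\left(V\right)\right)$. So to show $\Pi^{\et}_\i\left(Z\right) \simeq \underset{\alpha} \lim \Pi^{\et}_\i\left(Z_\alpha\right)$ as pro-spaces, it suffices to check that for every space $V$ we have a natural equivalence
$$\Map\left(Z,\Delta^{\et}\left(V\right)\right) \simeq \underset{\alpha} \colim \Map\left(Z_\alpha,\Delta^{\et}\left(V\right)\right).$$
(Here the limit of pro-spaces indexed by the cofiltered system $\alpha$ corresponds, under $\Pro\left(\Spc\right) \hookrightarrow \Fun\left(\Spc,\Spc\right)^{op}$ via Proposition \ref{prop:lex}, to the object-wise colimit of the corresponding left exact accessible functors.)

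First I would reduce to the case where each $Z_\alpha$ is an affine $S$-scheme, which is exactly the hypothesis. The maps $Z \to \Delta^{\et}\left(V\right)$ and $Z_\alpha \to \Delta^{\et}\left(V\right)$ are maps of \'etale sheaves of spaces, and since $Z$ and the $Z_\alpha$ are (representable by) affine schemes, the mapping space $\Map\left(Z_\alpha, \Delta^{\et}\left(V\right)\right)$ is simply the value $\Delta^{\et}\left(V\right)\left(Z_\alpha\right)$ of the Betti sheaf on $Z_\alpha$, by Yoneda; similarly $\Map\left(Z,\Delta^{\et}\left(V\right)\right) \simeq \Delta^{\et}\left(V\right)\left(Z\right)$. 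Then the required equivalence
$$\underset{\alpha}\colim \Delta^{\et}\left(V\right)\left(Z_\alpha\right) \to \Delta^{\et}\left(V\right)\left(Z\right)$$
is precisely the statement that $\Delta^{\et}\left(V\right)$ is limit-preserving — which is Proposition \ref{prop:limp2}. Finally I would note naturality in $V$ is automatic since everything in sight is functorial, so the equivalence upgrades to an equivalence of the associated left exact accessible endofunctors of $\Spc$, hence to an equivalence of pro-spaces, giving the claim.

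The only genuinely delicate point — and it is mild here — is making sure the indexing of the pro-space on the right-hand side is the correct one, i.e. that $\underset{\alpha}\lim \Pi^{\et}_\i\left(Z_\alpha\right)$ computed in $\Pro\left(\Spc\right)$ is modeled by the object-wise colimit of the functors $V \mapsto \Pi^{\et}_\i\left(Z_\alpha\right)\left(V\right)$; this is standard from Proposition \ref{prop:lex} together with the fact that cofiltered limits in $\Pro\left(\Spc\right)$ are computed as the relevant colimits of representing functors, and that a filtered colimit of left exact accessible functors $\Spc \to \Spc$ is again left exact and accessible. There is no serious obstacle; the content is entirely in Proposition \ref{prop:limp2}, which in turn rests on the fact that algebraic spaces locally of finite type are limit-preserving and that \'etale sheafification preserves this property (\cite[TAG 049N]{stacks-project}).
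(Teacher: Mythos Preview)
Your proposal is correct and is exactly the argument the paper has in mind: the corollary is stated as ``immediate'' from Proposition~\ref{prop:limp2}, and you have simply unpacked that implication via \cite[Theorem 2.40]{etalehomotopy}, Yoneda for representable (affine) objects, and the identification of cofiltered limits in $\Pro\left(\Spc\right)$ with objectwise filtered colimits of the corresponding functors. There is nothing to add.
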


\begin{proposition}\label{prop:limp3}
Suppose that $X$ is a finite saturated affine log scheme over $S$ with a global Kato chart $X\to \Spec\bZ[P]$. Let $\cX$ be a limit preserving fpqc sheaf of spaces which is $k$-truncated for some $k<\infty$ Then the natural map
\[
\underset{n}\colim \Map\left(\radice{n}{X}, \cX\right)\to \Map\left(\radice{\infty}{X}, \cX\right)
\]
is an  equivalence of spaces.
\end{proposition}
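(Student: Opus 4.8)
The key point is that $\radice{\infty}{X}$ is the limit, in fpqc stacks, of the pro-system $\{\radice{n}{X}\}_n$, and that this limit is computed by a particularly nice cofiltered diagram of affine schemes modulo a profinite group. Concretely, with a global Kato chart $X \to \Spec \bZ[P]$, we have $\radice{n}{X} \simeq [X_n / \boldsymbol{\mu}_n(P)]$ where $X_n = X \times_{\Spec \bZ[P]} \Spec \bZ[\tfrac1n P]$ is affine, and $\radice{\infty}{X} \simeq [X_\infty/\boldsymbol{\mu}_\infty(P)]$ with $X_\infty = X \times_{\Spec \bZ[P]} \Spec \bZ[P_\bQ] = \underset{n}\lim X_n$ affine and $\boldsymbol{\mu}_\infty(P) = \underset{n}\lim \boldsymbol{\mu}_n(P)$. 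So the first step is to reduce $\Map(\radice{n}{X}, \cX)$ and $\Map(\radice{\infty}{X}, \cX)$ to totalizations of cosimplicial spaces built from the bar resolutions of these group actions: $\Map([X_n/\boldsymbol{\mu}_n(P)], \cX) \simeq \underset{[k] \in \Delta}\lim \Map(X_n \times \boldsymbol{\mu}_n(P)^k, \cX)$, and similarly for $\infty$, using that $\cX$ is an fpqc sheaf (so it sends the groupoid presentation to a limit) and that $\boldsymbol{\mu}_n(P)$ is a finite flat — hence affine — group scheme, so each $X_n \times \boldsymbol{\mu}_n(P)^k$ is affine.

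**Second step: commute the colimit past the totalization.** We now want
\[
\underset{n}\colim\ \underset{[k] \in \Delta}\lim\ \Map\!\left(X_n \times \boldsymbol{\mu}_n(P)^k,\ \cX\right)\ \xrightarrow{\ \sim\ }\ \underset{[k] \in \Delta}\lim\ \underset{n}\colim\ \Map\!\left(X_n \times \boldsymbol{\mu}_n(P)^k,\ \cX\right),
\]
and then identify the right-hand inner colimit, using $\cX$ limit-preserving and $X_\infty \times \boldsymbol{\mu}_\infty(P)^k = \underset{n}\lim (X_n \times \boldsymbol{\mu}_n(P)^k)$ a cofiltered limit of affine $S$-schemes, with $\Map(X_\infty \times \boldsymbol{\mu}_\infty(P)^k, \cX)$, whose totalization is $\Map(\radice{\infty}{X}, \cX)$ again by fpqc descent for $\cX$. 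The commutation of a filtered colimit with a cosimplicial totalization is exactly where we use that $\cX$ is $k$-truncated for some finite $k$: a filtered colimit of spaces commutes with $n$-truncated limits, and a totalization of a cosimplicial diagram of $m$-truncated spaces is a finite limit up to $(m + \text{const})$-truncation — more precisely, since each $\Map(-,\cX)$ is $k$-truncated, the totalization can be computed as the limit over the $(k+1)$-coskeleton $\Delta_{\le k+1} \subset \Delta$, which is a finite category, and filtered colimits commute with finite limits.

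**Main obstacle.** The genuinely delicate point is the truncation/coskeleton argument in the second step: one must be careful that the totalization of a cosimplicial $k$-truncated space is computed by a bounded (hence essentially finite) subdiagram. The cleanest route is to invoke that for a cosimplicial object $Y^\bullet$ in $\tau_{\le k}\Spc$, the canonical map $\lim_{\Delta} Y^\bullet \to \lim_{\Delta_{\le k+1}} Y^\bullet$ is an equivalence (this is standard, e.g. via the Bousfield–Kan / Reedy tower, since the $n$-th layer of the tower involves $\pi_{n}$ of the cosimplicial space and vanishes for $n > k$), together with the fact that $\Delta_{\le k+1}$ has only finitely many objects so the limit over it commutes with filtered colimits of spaces. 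One subtlety to check carefully: the maps in the pro-system respect the group-scheme presentations compatibly, i.e. the transition maps $\radice{m}{X} \to \radice{n}{X}$ for $n \mid m$ are induced by $X_m \to X_n$ and $\boldsymbol{\mu}_m(P) \to \boldsymbol{\mu}_n(P)$, so that the whole construction $n \mapsto \Map(X_n \times \boldsymbol{\mu}_n(P)^\bullet, \cX)$ is genuinely a functor from $(\bN, |)$ to cosimplicial spaces — this is routine but needs to be said. Once these are in hand, chaining the equivalences gives the result.
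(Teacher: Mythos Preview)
Your proposal is correct and follows essentially the same approach as the paper: write each $\radice{n}{X}$ (including $n=\infty$) as a quotient $[U_n/G_n]$ with $U_n$ and $G_n$ affine, express $\Map(\radice{n}{X},\cX)$ as the totalization of the bar cosimplicial object, use $k$-truncatedness of $\cX$ to replace the totalization by a finite limit over $\Delta_{\le k}$ (the paper phrases this as computing the geometric realization in the $(k+1,1)$-category of $k$-truncated fpqc sheaves, which is the dual formulation of your coskeleton argument), commute the finite limit with the filtered colimit over $n$, and then invoke the limit-preserving hypothesis on $\cX$ term by term. The only cosmetic difference is that the paper truncates the simplicial diagram on the source side before mapping out, whereas you truncate the cosimplicial diagram on the target side after mapping in; both are equivalent and yield the same chain of identifications.
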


\begin{proof}
For every $n\in \bN$, let $U_n$ denote the fibered product $X\times_{ \Spec \bZ[P]} \Spec \bZ[\frac{1}{n}P]$, and let $U_\infty$ be the inverse limit, that is isomorphic to $X\times_{ \Spec \bZ[P]} \Spec \mathbb{Z}\left[P_{\bQ}\right]$. Moreover, let $G_n=\boldsymbol{\mu}_n(P)$
denote the Cartier dual of the cokernel of the natural inclusion $P\to\frac{1}{n}P$, and let $G_\infty=\boldsymbol{\mu}_\infty(P)$
be the inverse limit. Recall that we have equivalences
\[
\radice{n}{X}\simeq \left[U_n/G_n\right]
\]
for the natural action of $G_n$ on $U_n$ (including $n=\infty$, if we use the fpqc topology), and these are compatible with the projections between root stacks and the quotient stacks.
Notice that this implies that in the $\left(k+1,1\right)$-category of fpqc sheaves of $k$-groupoids,
$$\radice{n}{X} \simeq \underset{l \in \Delta_{\le k}^{op}} \colim \left(U_{n} \times G_n^{l}\right).$$ Note that this is a finite limit, so it commutes with filtered colimits.
Hence, we have the following string of natural equivalences
\begin{eqnarray*}
\Map\left(\radice{\infty}{X}, \cX\right) &\simeq& \underset{m \in \Delta_{\le k}} \lim \Map\left(U_\i \times G_\i^m,\cX\right)\\
&\simeq& \underset{m \in \Delta_{\le k}} \lim \cX\left(U_\i \times G_\i^m\right)\\
&\simeq& \underset{m \in \Delta_{\le k}} \lim \cX\left(\underset{n} \lim \left(U_n \times G_n^m\right)\right)\\
&\simeq& \underset{m \in \Delta_{\le k}} \lim \underset{n} \colim \cX\left(U_n \times G_n^m\right)\\
&\simeq& \underset{n} \colim \underset{m \in \Delta_{\le k}} \lim \cX\left(U_n \times G_n^m\right)\\
&\simeq& \underset{n} \colim \underset{m \in \Delta_{\le k}} \lim \Map\left(U_n \times G_n^m,\cX\right)\\
&\simeq& \underset{n} \colim \Map\left(\underset{m \in \Delta_{\le k}^{op}} \colim \left(U_n \times G_n^m\right),\cX\right)\\
&\simeq& \underset{n} \colim \Map\left(\radice{n}{X},\cX\right).\\
\end{eqnarray*}
\end{proof}

\begin{lemma}\label{lem:orproaff}
Suppose that $X$ is a finite saturated affine log scheme over $S$ with a global Kato chart. Let $\radice{\i}{X}_{pro}$ denote the infinite root stack viewed as a pro-object and $\radice{\i}{X}$ the actual limit of this pro-object, which is an fpqc-stack of groupoids. Then the canonical map
$$\widehat{\Pi}^{\et}_\i\left(\radice{\i}{X}_{pro}\right) \to \widehat{\Pi}^{\et}_\i\left(\radice{\i}{X}\right)$$ between their \underline{profinitely completed} \'etale homotopy types is an equivalence.
\end{lemma}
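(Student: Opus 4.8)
The plan is to reduce the statement about \'etale homotopy types to the comparison of mapping spaces established in Proposition \ref{prop:limp3}, using the description of $\Pi^{\et}_\i$ via Betti stacks together with the characterization of profinite completion from Proposition \ref{prop:ellsame}. Recall that for a sheaf of spaces $\cX$ on $\left(\Aff',\et\right)$ one has $\Pi^{\et}_\i\left(\cX\right)\left(V\right) \simeq \Map\left(\cX,\Delta^{\et}\left(V\right)\right)$, and that the profinitely completed shape is determined, via Proposition \ref{prop:ellsame} applied over all primes $\ell$ (or directly by testing against $K(A,n)$ for finite abelian $A$), by the values of this functor on Eilenberg--MacLane spaces $K\left(A,n\right)$ with $A$ finite. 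So it suffices to show that for every finite abelian group $A$ and every $n$, the canonical map
$$
\Map\left(\radice{\i}{X},\Delta^{\et}\left(K\left(A,n\right)\right)\right) \to \Map\left(\radice{\i}{X}_{pro},\Delta^{\et}\left(K\left(A,n\right)\right)\right)
$$
is an equivalence, and then bootstrap from $K(A,n)$ to all $\pi$-finite $V$ by the same Postnikov/finite-limit argument used in Proposition \ref{prop:connV} (using that $\Delta^{\et}$ preserves finite limits and that both sides send finite limits in the target to limits).

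First I would unwind the right-hand side: by definition of the \'etale homotopy type of a pro-object (formal cofiltered limit), $\Map\left(\radice{\i}{X}_{pro},\cX\right) \simeq \underset{n}\colim \Map\left(\radice{n}{X},\cX\right)$ for $\cX$ a sheaf, because $\radice{\i}{X}_{pro}$ is the pro-object $\{\radice{n}{X}\}_n$ and $\Map(-,\cX)$ turns the formal limit into the honest colimit of the mapping spaces. The left-hand side is $\Map\left(\radice{\i}{X},\cX\right)$ where $\radice{\i}{X}$ is the honest fpqc-stack limit. So the claim becomes exactly: the natural map
$$
\underset{n}\colim \Map\left(\radice{n}{X},\Delta^{\et}\left(K\left(A,n\right)\right)\right) \to \Map\left(\radice{\i}{X},\Delta^{\et}\left(K\left(A,n\right)\right)\right)
$$
is an equivalence. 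Now I invoke Theorem \ref{thm:Betti-fpqc}: $\Delta^{\et}\left(K\left(A,n\right)\right)$ is a $\pi$-finite Betti stack, hence satisfies fpqc descent, so it is a genuine fpqc sheaf of spaces; it is $n$-truncated; and by Proposition \ref{prop:limp2} it is limit-preserving. Therefore Proposition \ref{prop:limp3} applies with $\cX = \Delta^{\et}\left(K\left(A,n\right)\right)$ and $k = n$, giving precisely the desired equivalence. This is where the hypothesis that $X$ is affine with a global Kato chart is used — it is exactly the hypothesis of Proposition \ref{prop:limp3}.

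It remains to pass from Eilenberg--MacLane targets to all $\pi$-finite $V$, and to conclude the statement about profinite completions. For the first point: every connected $\pi$-finite $V$ is built from $K(A,n)$'s by finitely many pullbacks along maps $B\Aut(K(A,n)) \to B\Aut(K(A,n-1)...)$ as in Proposition \ref{prop:connV}, and a general $\pi$-finite space is a finite coproduct of connected ones; since $\Delta^{\et}$ preserves finite limits (and finite coproducts are handled by Corollary \ref{cor:coprodsh2}), while $\Map(\radice{\i}{X},-)$ preserves all limits and $\underset{n}\colim\Map(\radice{n}{X},-)$ preserves finite limits (filtered colimits commute with finite limits in spaces) and finite coproducts, the equivalence propagates to all $\Delta^{\et}(V)$ with $V$ $\pi$-finite. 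For the second point: by Proposition \ref{prop:ellsame} (and its evident $\pi$-finite analogue), a map of pro-spaces induces an equivalence on profinite completions precisely when it induces an equivalence on $\Map(-,K(A,n))$ for all finite abelian $A$ and all $n$; we have just verified this for the map $\Pi^{\et}_\i\left(\radice{\i}{X}_{pro}\right) \to \Pi^{\et}_\i\left(\radice{\i}{X}\right)$, so $\widehat{\Pi}^{\et}_\i\left(\radice{\i}{X}_{pro}\right) \to \widehat{\Pi}^{\et}_\i\left(\radice{\i}{X}\right)$ is an equivalence. The main obstacle in this argument is entirely concentrated in Proposition \ref{prop:limp3} and, behind it, in Theorem \ref{thm:Betti-fpqc} — namely that Betti stacks of $\pi$-finite spaces satisfy \emph{fpqc} (not merely \'etale) descent, which is what licenses treating $\Delta^{\et}(K(A,n))$ as a sheaf on the fpqc atlas $[U_\i/G_\i]$ presenting $\radice{\i}{X}$; once that is in hand, the truncatedness and limit-preservation hypotheses are exactly what make the interchange of the finite $\Delta_{\le k}$-limit with the filtered colimit over $n$ legitimate, and the rest is formal.
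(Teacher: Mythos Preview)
Your approach is essentially the paper's, and the core steps---identifying $\Pi^{\et}_\i(\cX)(V)\simeq\Map(\cX,\Delta^{\et}(V))$, invoking Theorem~\ref{thm:Betti-fpqc} and Proposition~\ref{prop:limp2} to put $\Delta^{\et}(V)$ in the scope of Proposition~\ref{prop:limp3}, and reading off the colimit identity---are exactly what the paper does. The paper, however, skips your detour through Eilenberg--MacLane spaces entirely: since Theorem~\ref{thm:Betti-fpqc} already gives fpqc descent for \emph{every} $\pi$-finite $V$, and every such $V$ is truncated, Proposition~\ref{prop:limp3} applies directly to $\Delta^{\et}(V)$ for all $\pi$-finite $V$ at once. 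No bootstrap is needed.

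Your detour also has two small imprecisions worth flagging. First, Proposition~\ref{prop:ellsame} concerns $\ell$-completion, not profinite completion; agreement of all $\ell$-completions, or equivalently agreement on $\Map(-,K(A,n))$ for $A$ finite abelian, does \emph{not} in general imply agreement of profinite completions (think of spaces with non-nilpotent $\pi_1$). The correct criterion is simply the definition: a map of pro-spaces is a profinite equivalence iff it induces an equivalence after evaluating at every $\pi$-finite $V$. Second, your bootstrap ``every connected $\pi$-finite $V$ is built from $K(A,n)$'s by finitely many pullbacks'' fails when $\pi_1(V)$ is non-abelian: the base of the Postnikov tower is $B\pi_1(V)$, which is not a $K(A,1)$ with $A$ abelian. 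The argument of Proposition~\ref{prop:connV} handles this via Proposition~\ref{prop:Gfin}, so you would need $BG$ for arbitrary finite $G$ as an additional base case. Both issues evaporate if you follow the paper and test against arbitrary $\pi$-finite $V$ from the start.
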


\begin{proof}
By definition, $\widehat{\Pi}^{\et}_\i\left(\radice{\i}{X}_{pro}\right)$ is the cofiltered limit in profinite spaces $$\underset{n} \lim \widehat{\Pi}^{\et}_\i\left(\radice{n}{X}\right).$$ Recall that the $\i$-category $\Profs$ is the opposite of the full-subcategory of $\Fun\left(\Spc^{\pi},\Spc\right),$ on those functors $$\Spc^{\pi} \to \Spc$$ which are accessible and preserve finite limits, where $\Spc^{\pi}$ is the $\i$-category of $\pi$-finite spaces. In particular, filtered limits in $\Profs$ are computed as the filtered colimit in the functor category. So we have, as a functor, for all $\pi$-finite spaces $V,$
\begin{eqnarray*}
\widehat{\Pi}^{\et}_\i\left(\radice{\i}{X}_{pro}\right)\left(V\right) &\simeq& \underset{n} \colim \widehat{\Pi}^{\et}_\i\left(\radice{n}{X}\right)\left(V\right)\\
&\simeq& \underset{n} \colim \Pi^{\et}_\i\left(\radice{n}{X}\right)\left(V\right)
\end{eqnarray*}
But, by \cite[Theorem 2.40]{etalehomotopy}, we have 
$$\Pi^{\et}_\i\left(\radice{n}{X}\right)\left(V\right) \simeq \Map_{\Sh\left(\Sch_S,\et\right)}\left(\radice{n}{X},\Delta^{\et}\left(V\right)\right),$$ for all $n$ including $n=\i.$ But by Theorem \ref{thm:Betti-fpqc} and Proposition \ref{prop:limp2}, $\Delta^{\et}\left(V\right)$ is a limit-preserving fpqc-sheaf, and since every $\pi$-finite spaces is $k$-truncated for some $k < \i,$ the result now follows from Proposition \ref{prop:limp3}.
\end{proof}

\begin{theorem}\label{thm:prothesame}
Let $X$ be a fine saturated log scheme over $S$. Let $\radice{\i}{X}_{pro}$ denote the infinite root stack viewed as a pro-object and $\radice{\i}{X}$ the actual limit of this pro-object, which is an fpqc-stack of groupoids. Then the canonical map
$$\widehat{\Pi}^{\et}_\i\left(\radice{\i}{X}_{pro}\right) \to \widehat{\Pi}^{\et}_\i\left(\radice{\i}{X}\right)$$ between their \underline{profinitely completed} \'etale homotopy types is an equivalence.
\end{theorem}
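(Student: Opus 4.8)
The plan is to reduce to the affine case already established in Lemma~\ref{lem:orproaff}, by \'etale descent. Since a Kato chart exists \'etale-locally, fix an \'etale cover $\{X_i\to X\}_i$ with every $X_i$ affine and carrying a global Kato chart, and let $X_\bullet$ be its \v{C}ech nerve, so $X_p=\bigsqcup_{i_0,\dots,i_p}X_{i_0}\times_X\cdots\times_X X_{i_p}$. The first ingredient is that all the root stack constructions commute with strict base change --- immediate from the functor of points for finite $n$, and true for $\radice{\i}{(\blank)}$ because the infinite root stack is the limit of the $\radice{n}{(\blank)}$ and pullback preserves limits. Hence $\bigsqcup_i\radice{n}{X_i}\to\radice{n}{X}$ is the pullback of the \'etale cover $\bigsqcup_i X_i\to X$ along the structure map, so (effective epimorphisms being stable under pullback in an $\i$-topos) it is an effective \'etale epimorphism, and its \v{C}ech nerve is $\radice{n}{X_\bullet}$ (using also the identity $(A\times_X B)\times_A(A\times_X C)\simeq A\times_X(B\times_X C)$). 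Therefore $\radice{n}{X}\simeq\underset{\Delta^{op}}\colim\radice{n}{X_\bullet}$ as \'etale sheaves of spaces on $\Sch_S$, for every $n$, including $n=\i$.

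The second ingredient is that, for $V$ a $\pi$-finite space, the Betti stack $\Delta^{\et}(V)$ is $k$-truncated for some $k<\i$ (sheafification preserves truncatedness), so \v{C}ech descent for $\Map(\blank,\Delta^{\et}(V))$ along $X_\bullet$ converges at a finite stage: there is a finite $r$ with $\Map(\radice{n}{X},\Delta^{\et}(V))\simeq\underset{\Delta_{\le r}}\lim\Map(\radice{n}{X_\bullet},\Delta^{\et}(V))$, uniformly in $n$. Granting that the conclusion of Lemma~\ref{lem:orproaff} holds for every $X_p$ (see below), I would then, for fixed $\pi$-finite $V$, use $\widehat{\Pi}^{\et}_\i(\cX)(V)\simeq\Map_{\Sh(\Sch_S,\et)}(\cX,\Delta^{\et}(V))$ and the fact that cofiltered limits in $\Profs$ are computed objectwise as filtered colimits (as in the proof of Lemma~\ref{lem:orproaff}) to compute
\begin{align*}
\widehat{\Pi}^{\et}_\i(\radice{\i}{X}_{pro})(V)
&\simeq \underset{n}\colim\Map(\radice{n}{X},\Delta^{\et}(V))
\simeq \underset{n}\colim\underset{\Delta_{\le r}}\lim\Map(\radice{n}{X_\bullet},\Delta^{\et}(V))\\
&\simeq \underset{\Delta_{\le r}}\lim\underset{n}\colim\Map(\radice{n}{X_\bullet},\Delta^{\et}(V))
\simeq \underset{\Delta_{\le r}}\lim\Map(\radice{\i}{X_\bullet},\Delta^{\et}(V))\\
&\simeq \Map(\radice{\i}{X},\Delta^{\et}(V))
\simeq \widehat{\Pi}^{\et}_\i(\radice{\i}{X})(V),
\end{align*}
where the third equivalence interchanges a filtered colimit with the \emph{finite} limit $\underset{\Delta_{\le r}}\lim$, the fourth applies Lemma~\ref{lem:orproaff} termwise, and the fifth uses truncatedness again. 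All these are natural in $V$ and compatible with the canonical comparison map; since a morphism of profinite spaces is an equivalence as soon as it is so after evaluation on every $\pi$-finite space, the theorem follows.

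It remains to verify Lemma~\ref{lem:orproaff} for the terms $X_p$. Each factor $X_{i_0}\times_X\cdots\times_X X_{i_p}$ is \'etale over $X_{i_0}$, hence inherits the latter's global Kato chart, and it is a locally closed subscheme of the affine $S$-scheme $X_{i_0}\times_S\cdots\times_S X_{i_p}$ (the iterated diagonal of a scheme being an immersion), hence quasi-affine, in particular separated. For a \emph{separated} log scheme $Y$ with a global Kato chart, Lemma~\ref{lem:orproaff} is in turn obtained by rerunning the computation above for an affine \'etale cover of $Y$: the terms of \emph{its} \v{C}ech nerve are intersections of affine opens in a separated scheme, hence affine and chart-carrying, so the stated affine form of Lemma~\ref{lem:orproaff} applies directly. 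This gives the claim for each $X_p$, hence the theorem. Given Lemma~\ref{lem:orproaff}, the globalization is essentially formal, and the one step I expect to need care is the colimit--limit interchange in the third displayed equivalence: filtered colimits do not commute with cosimplicial limits in general, and the argument works only because $\Delta^{\et}(V)$ is truncated for $\pi$-finite $V$, which turns the descent totalization into a finite limit; the reduction to the affine form of Lemma~\ref{lem:orproaff} through separated chart-carrying schemes is routine bookkeeping, needed because \v{C}ech-nerve terms of \'etale covers of non-separated schemes need not be affine.
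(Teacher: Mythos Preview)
Your strategy---reduce to Lemma~\ref{lem:orproaff} by \'etale descent, and exploit truncatedness of $\Delta^{\et}(V)$ to replace the cosimplicial totalization by a \emph{finite} limit so that it commutes with the filtered colimit over $n$---is the same as the paper's, and your displayed chain of equivalences is sound. The paper phrases things dually (working in $\Profs$ with hypercovers rather than with mapping spaces and \v{C}ech nerves), but that difference is cosmetic.

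There is, however, a genuine gap in the sentence ``This gives the claim for each $X_p$.'' Unless $X$ is quasi-compact, each
\[
X_p=\bigsqcup_{i_0,\dots,i_p} X_{i_0}\times_X\cdots\times_X X_{i_p}
\]
is an \emph{infinite} disjoint union, and you only verify the analogue of Lemma~\ref{lem:orproaff} for each summand. Passing from the summands to $X_p$ would require
\[
\underset{n}\colim\,\prod_{\alpha}\Map\bigl(\radice{n}{X_{p,\alpha}},\Delta^{\et}(V)\bigr)\;\stackrel{\sim}{\longrightarrow}\;\prod_{\alpha}\,\underset{n}\colim\,\Map\bigl(\radice{n}{X_{p,\alpha}},\Delta^{\et}(V)\bigr),
\]
i.e.\ commuting a filtered colimit past an \emph{infinite} product in $\Spc$, which is false in general. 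This is exactly the obstacle the paper's Appendix exists to handle: the paper invokes Corollary~\ref{cor:fixed2} at this step to show that in $\Profs$ the cofiltered limit over $n$ commutes with the coproduct over $\alpha$. With a finite cover (e.g.\ $X$ quasi-compact) your argument works verbatim; in general you need Corollary~\ref{cor:fixed2} or an equivalent.
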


\begin{proof}
Firstly, notice that since we are working with the profinitely completed \'etale homotopy type, we may work with hypersheaves rather than \v{C}ech sheaves, since the difference between the shape of an $\i$-topos and its hypercompletion is erased by profinite completion. Choose an \'etale hypercover $$U_\bullet:\Delta^{op} \to \mathbb{H}\!\operatorname{yp}\!\Sh\left(\Sch_S\right)/X$$ of $X$ such that each $U_k$ is the coproduct of fine saturated affine log schemes each of which admits a global Kato chart
$$U_k=\underset{\alpha \in I_k} \coprod X_{\alpha,k}.$$
Consider the pro-object $p:\radice{\i}{X}_{pro} \to X,$ which may be viewed as a functor $$\mathbb{N} \to \mathbb{H}\!\operatorname{yp}\Sh\left(\Sch_S\right)/X.$$ Then, for each $n$ including $n=\i$ $$\left(p_n\right)^*U_{\bullet}$$ is a hypercover of $\radice{n}{X}.$  By a completely analogous argument to the proof of \cite[Lemma 6.1]{KNIRS}, we have that $$\widehat{\Pi}^{\et}_\i\left(\radice{\i}{X}_{pro}\right) \simeq \underset{k \in \Delta^{op}} \colim \left[\widehat{\Pi}^{\et}_\i\left(\underset{n} \lim p_n^*U_k\right)\right].$$
Notice however that for each $n$ and $k$ we have canonical identifications
$$\left(p_n\right)^*\left(U_k\right) \simeq \underset{\alpha  \in I_k} \coprod \left(\radice{n}{X} \times_{X} X_{\alpha,k}\right) \simeq \underset{\alpha  \in I_k} \coprod \radice{n}{X_{\alpha,k}}.$$
By Corollary \ref{cor:fixed2}, we therefore have that for each $k,$ $$\widehat{\Pi}^{\et}_\i\left(\underset{n} \lim p_n^*U^k\right) \simeq \underset{\alpha  \in I_k} \coprod \widehat{\Pi}^{\et}_\i\left(\radice{\i}{X_{\alpha,k}}_{pro}\right).$$
Since each $X_{\alpha,k}$ is affine with a global Kato chart, by Lemma \ref{lem:orproaff}, we have for each $k$ and each $\alpha$ 
$$\widehat{\Pi}^{\et}_\i\left(\radice{\i}{X_{\alpha,k}}_{pro}\right) \simeq \widehat{\Pi}^{\et}_\i\left(\radice{\i}{X_{\alpha,k}}\right)$$ and hence
\begin{eqnarray*}
 \widehat{\Pi}^{\et}_\i\left(\radice{\i}{X}_{pro}\right) &\simeq& \underset{k \in \Delta^{op}} \colim \underset{\alpha \in I_k} \coprod \widehat{\Pi}^{\et}_\i\left(\radice{\i}{X_{\alpha,k}}\right)\\
 &\simeq& \underset{k \in \Delta^{op}} \colim \underset{\alpha \in I_k} \coprod \widehat{\Pi}^{\et}_\i\left(\radice{\i}{X} \times_{X} X_{\alpha,k}\right)\\
 &\simeq& \widehat{\Pi}^{\et}_\i\left(\underset{k \in \Delta^{op}} \colim \underset{\alpha \in I_k} \coprod \left(\radice{\i}{X} \times_{X} X_{\alpha,k}\right)\right)\\
 &\simeq& \widehat{\Pi}^{\et}_\i\left(\underset{k \in \Delta^{op}} \colim \underset{\alpha \in I_k} \coprod \left(p_\i\right)^*\left(U_k\right)\right)\\
 &\simeq& \widehat{\Pi}^{\et}_\i\left(\radice{\i}{X}\right).
\end{eqnarray*}
\end{proof}

\subsection{Shape comparison with the Kummer \'etale topos}
\label{shapecomparison2}
Let us take $\Aff'$ to be a suitable \emph{small} subcategory of affine schemes, closed under finite limits and \'etale morphisms, and containing the empty scheme.





\begin{definition}
Let $\cX$ be a sheaf of groupoids on $\left(\Aff',\et\right)$ with affine diagonal. Denote by $\cX^{\mathfrak{DM}_{rep}}_{\et}$ the full subcategory of $\Sh\left(\Aff',\et\right)/\cX$ on those maps $\cY \to \cX$ such that for any map $f:T \to \cX$ from a scheme, $\cY\times_{\cX} T \to T$ is representable by a higher Deligne-Mumford stack \'etale over $T.$ 
\end{definition}

\begin{lemma}\label{lem:limittop}
For all $\cX$ as above, there is a canonical equivalence of $\i$-categories
$$\cX^{\mathfrak{DM}_{rep}}_{\et} \simeq \underset{T \to \cX} \lim T^{\mathfrak{DM}_{rep}}_{\et},$$
where the limit ranges over $\Aff'/\cX.$ 
\end{lemma}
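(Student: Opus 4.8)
\textbf{Proof strategy for Lemma \ref{lem:limittop}.}
The plan is to exhibit both sides as full subcategories of $\Sh\left(\Aff',\et\right)/\cX$ and to show that an object $\cY \to \cX$ lies in $\cX^{\mathfrak{DM}_{rep}}_{\et}$ precisely when its pullback to each $T \to \cX$ (with $T$ affine) lies in $T^{\mathfrak{DM}_{rep}}_{\et}$, compatibly. First I would recall the basic fact that for a sheaf of groupoids $\cX$ on $\left(\Aff',\et\right)$, the canonical map $\underset{T \to \cX} \colim\, T \to \cX$ (colimit over $\Aff'/\cX$ in $\Sh\left(\Aff',\et\right)$) is an equivalence, since $\cX$ is a colimit of representables; dually, $\Sh\left(\Aff',\et\right)/\cX \simeq \underset{T \to \cX} \lim\, \Sh\left(\Aff',\et\right)/T$, the limit being taken in $\i$-categories along the pullback functors, by descent for the slice construction (this is the $\i$-categorical version of \cite[Remark 6.3.5.10]{htt} / the fact that $\Sh\left(\Aff',\et\right)/\left(-\right)$ sends colimits of objects to limits of slice categories). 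So the ambient equivalence $\Sh\left(\Aff',\et\right)/\cX \simeq \underset{T \to \cX} \lim\, \Sh\left(\Aff',\et\right)/T$ already holds; the content of the lemma is that it restricts to the indicated full subcategories.

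Next I would check that the equivalence is compatible with the defining conditions. Given $\cY \to \cX$, its image in $\underset{T \to \cX} \lim\, \Sh\left(\Aff',\et\right)/T$ is the system $\left(\cY \times_\cX T \to T\right)_{T \to \cX}$. By definition $\cY \to \cX$ belongs to $\cX^{\mathfrak{DM}_{rep}}_{\et}$ iff for every affine $T \to \cX$ the base change $\cY \times_\cX T \to T$ is representable by a higher Deligne-Mumford stack \'etale over $T$, i.e. iff each component of the system lies in $T^{\mathfrak{DM}_{rep}}_{\et}$. Conversely, an object of $\underset{T \to \cX} \lim\, T^{\mathfrak{DM}_{rep}}_{\et}$ is a compatible system of such \'etale representable-DM maps, which glues (via the ambient equivalence) to a unique $\cY \to \cX$; one must verify that this glued object again has the property that all its affine base changes are \'etale representable over the base. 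This is where one uses that ``\'etale and representable by a higher Deligne-Mumford stack'' is a property stable under base change and \'etale-local on the target: if $T' \to T$ is a map in $\Aff'/\cX$, then $\left(\cY\times_\cX T\right)\times_T T' \simeq \cY \times_\cX T'$, so the system automatically records base changes correctly, and for an arbitrary (not-necessarily-affine) $f: T \to \cX$ one reduces to affine $T$ by choosing an \'etale cover and invoking descent for the DM-\'etale condition (using that $\Sh\left(T_{\et}\right) \simeq \mathfrak{DM}^{\et}_T$ is a sheaf of $\i$-categories in $T$). Because these conditions match up termwise, the ambient equivalence carries $\cX^{\mathfrak{DM}_{rep}}_{\et}$ isomorphically onto $\underset{T \to \cX} \lim\, T^{\mathfrak{DM}_{rep}}_{\et}$.

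The main obstacle I anticipate is the bookkeeping around the affine-versus-general target subtlety: the defining condition for $\cX^{\mathfrak{DM}_{rep}}_{\et}$ quantifies over maps from \emph{schemes} (or, after reduction, affine schemes in $\Aff'$), whereas the limit is indexed by $\Aff'/\cX$, so one needs that testing the DM-\'etale-representability condition on affines in $\Aff'$ is equivalent to testing it on all schemes mapping to $\cX$. This follows from Zariski-local and \'etale-local nature of the condition plus the fact that $\cX$ is a sheaf, but it is the step that requires care; once it is in place, the rest is the formal manipulation of slice $\i$-categories and limits sketched above. I would also remark that the affine-diagonal hypothesis on $\cX$ guarantees that each fibered product $\cY \times_\cX T$ and each overlap $T' \times_\cX T''$ is again (representable by) a reasonable object, so that the descent arguments apply.
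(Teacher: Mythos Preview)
Your strategy is the same as the paper's: establish the ambient equivalence
$\Sh\left(\Aff',\et\right)/\cX \simeq \underset{T \to \cX}\lim \Sh\left(\Aff',\et\right)/T$
(via the fact that $\cX$ is the colimit of its affine points and that slicing sends colimits to limits of $\i$-categories), and then check it restricts to the full subcategories in question. The forward direction (pullback lands in $T^{\mathfrak{DM}_{rep}}_{\et}$) is immediate from the definition, and you correctly isolate the converse as the nontrivial direction.

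For the converse you argue via \'etale-locality and base-change stability of the DM-\'etale condition, whereas the paper is more direct: it identifies the inverse functor $\tau$ explicitly as $\left(P_f\right)_f \mapsto \underset{f}\colim\, P_f$ and then computes, for any $g:S \to \cX$ with $S$ affine, that $S \times_{\cX}\left(\underset{f}\colim\,P_f\right) \simeq P_g$. This computation is where the affine-diagonal hypothesis is actually used: for each $f:T\to\cX$ the fiber product $S\times_\cX T$ is affine, hence indexes an object $h\in\Aff'/\cX$, and one can then rewrite $S\times_\cX P_f \simeq P_h \simeq P_g \times_\cX T$ and conclude by universality of colimits. Your abstract argument (that $\theta\circ\tau\simeq\id$ forces $\cY\times_\cX T_f\simeq P_f$) reaches the same conclusion without needing this explicit step, and is arguably cleaner; the paper's version has the virtue of making visible exactly how the affine-diagonal assumption enters. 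Either way the remaining point you flag --- reducing the ``test against all schemes'' in the definition to ``test against affines in $\Aff'$'' --- is handled by Zariski descent, as you say.
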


\begin{proof}
Consider the functor
\begin{eqnarray*}
\chi:\Sh\left(\Aff',\et\right)&\to& \Topi\\
\cY &\mapsto& \Sh\left(\Aff',\et\right)/\cY.
\end{eqnarray*}
By \cite[Proposition 6.3.5.14]{htt}, $\chi$ preserves small colimits. Note that by the Yoneda lemma, $$\cX \simeq \underset{T \to \cX} \colim T,$$ where the colimit ranges over $\Aff'/\cX.$ Moreover, by \cite[Proposition 6.3.2.3]{htt}, this colimit is computed by taking the limits of the underlying $\i$-categories in the opposite direction, using the inverse-image functors. So, on one hand, we have
$$\Sh\left(\Aff',\et\right)/\cX \simeq \underset{T \to \cX} \lim \Sh\left(\Aff',\et\right)/T$$ in $\widehat{\mathbf{Cat}}_\i$--- the (large) $\i$-category of small $\i$-categories. On the other hand, by \cite[Corollary 3.3.3.2]{htt}, for any functor $$F:J \to  \widehat{\mathbf{Cat}}_\i,$$ its limit can be identified with the $\i$-category of Cartesian sections of $$\underset{J} \int F \to J,$$ i.e. $$\lim F \simeq \Fun^{\mathrm{Cart}}_{J}\left(J,\underset{J} \int F\right).$$ In the case that $$J=\Aff'/\cX \stackrel{F=\chi}{\longlongrightarrow} \widehat{\mathbf{Cat}}_\i,$$ the $\i$-category of Cartesian sections can be viewed informally as collections of maps $P_f \to T$ in $\Sh\left(\Aff',\et\right)$ for each map $T \to \cX$ from an affine scheme, with coherent equivalences $$P_g \simeq \varphi^* P_f,$$ for all $\varphi:g \to f$ in $\Aff'/\cX.$ In particular, such an object yields a well-defined functor $P:\Aff'/\cX \to \Sh\left(\Aff',\et\right)/\cX,$ sending each $f:T \to \cX$ to $P_f \to T \to \cX.$ Unwinding the definitions, the equivalence $$\theta:\Sh\left(\Aff',\et\right)/\cX \stackrel{\simeq}{\longrightarrow} \underset{T \to \cX} \lim \Sh\left(\Aff',\et\right)/T$$ sends an object $\cY \to \cX$ in $\Sh\left(\Aff',\et\right)/\cX$ to the collection $\left(\cY\times_{\cX} T \to T\right)_{T \to \cX}.$ As $\theta$ is an equivalence, it has an inverse $\tau.$ Notice that, for any $\cY \to \cX$ in $\Sh\left(\Aff',\et\right)/\cX,$ since colimits are universal, we have a pullback diagram
$$\xymatrix{\underset{f:T \to \cX} \colim \theta\left(\cY\right)_f \ar[r]^-{\sim} \ar[d] & \cY \ar[d]\\
\underset{f:T \to \cX} \colim T \ar[r]^-{\sim} & \cX.}$$
Since $\theta$ is essentially surjective, it follows that $\tau$ sends a collection $\left(P_f \to T\right)_{f:T \to \cX}$ to $\colim P.$ 

For each affine scheme $T,$ $T^{\mathfrak{DM}_{rep}}_{\et}$ is the subcategory of $\Sh\left(\Aff',\et\right)/T$ spanned by maps which are representable by a higher Deligne-Mumford stack \'etale over $T.$ Notice that the above functor $\theta$ restricts to a functor $$\theta:\cX^{\mathfrak{DM}_{rep}}_{\et} \to \underset{T \to \cX} \lim T^{\mathfrak{DM}_{rep}}_{\et}.$$ We claim that the above functor $\tau$ also restricts to a functor in the opposite direction. To see this, notice that for all $g:S \to \cX$ from an affine scheme, we have a pullback diagram
$$\xymatrix{\underset{T \stackrel{f}{\to} \cX} \colim\left(S \times_{\cX} P_f\right) \ar[r] \ar[d] & \underset{T \stackrel{f}{\to} \cX} \colim\left(P_f\right) \ar[d]\\
S \ar[r]^-{g} & \cX.}$$
Fix $f:T \to \cX.$ Then $h:S \times_{\cX} T \to \cX$ is in $\Aff'/\cX$ since $\cX$ has affine diagonal. It follows that 
$$S \times_{\cX} P_f \simeq P_h.$$
But also, $$P_h \simeq P_g \times_{S} \left(S \times_{\cX} T\right) \simeq P_g \times_{\cX} T.$$
Hence we have
\begin{eqnarray*}
\underset{T \stackrel{f}{\to} \cX} \colim\left(S \times_{\cX} P_f\right) &\simeq& \underset{T \stackrel{f}{\to} \cX} \colim\left(P_g \times_{\cX} T\right)\\
&\simeq& P_g \times_{\cX} \left(\underset{T \stackrel{f}{\to} \cX} \colim T\right)\\
&\simeq& P_g \times_{\cX} \cX\\
&\simeq& P_g.
\end{eqnarray*}
Therefore, the pullback along $g$ can be identified with $P_g \to S,$ which is an \'etale map from a higher Deligne-Mumford stack. 
\end{proof}

\begin{corollary}\label{cor:colimtop}
For all $\cX$ as in Lemma \ref{lem:limittop}, the $\i$-category $\cX^{\mathfrak{DM}_{rep}}_{\et}$ is an $\i$-topos, and in $\Topi,$ we have
$$\cX^{\mathfrak{DM}_{rep}}_{\et} = \underset{T \to \cX} \colim \Sh\left(T_{\et}\right),$$ where $\Sh\left(T_{\et}\right)$ is the small \'etale $\i$-topos of the affine scheme $T.$
\end{corollary}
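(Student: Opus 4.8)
The plan is to deduce this corollary from Lemma \ref{lem:limittop} by combining it with the known presentation of the small \'etale $\infty$-topos of an affine scheme as $T^{\mathfrak{DM}_{rep}}_{\et}$, together with the fact that the functor $\chi\colon \Sh(\Aff',\et) \to \Topi$ sending $\cY \mapsto \Sh(\Aff',\et)/\cY$ preserves colimits. First I would recall that, for an affine scheme $T$, there is a canonical equivalence $T^{\mathfrak{DM}_{rep}}_{\et} \simeq \Sh(T_{\et})$ identifying the small \'etale site of $T$ with the category of \'etale (representable, Deligne--Mumford) maps over $T$; this is exactly the identification $\mathfrak{DM}^{\et}_T \simeq \Sh(T_{\et})$ recorded before the log-geometry review, combined with the observation that for a scheme $T$ every \'etale map from a higher Deligne--Mumford stack is representable by an algebraic space. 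Feeding this equivalence into Lemma \ref{lem:limittop} gives
$$\cX^{\mathfrak{DM}_{rep}}_{\et} \simeq \underset{T \to \cX}{\lim}\, \Sh(T_{\et}),$$
a limit in $\widehat{\mathbf{Cat}}_\i$ over $\Aff'/\cX$ using inverse-image functors.

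Next I would argue that this limit of $\infty$-categories is in fact an $\infty$-topos and agrees with the corresponding colimit in $\Topi$. The key input is \cite[Theorem 6.3.3.1]{htt} (or the dual statement used already in the proof of Lemma \ref{lem:limittop} via \cite[Proposition 6.3.2.3]{htt}): a colimit of $\infty$-topoi along geometric morphisms is computed, at the level of underlying $\infty$-categories, as the limit of the underlying categories along the inverse-image functors. Since each $\Sh(T_{\et})$ is an $\infty$-topos, and the transition functors $\varphi^*$ for $\varphi\colon S\to T$ in $\Aff'/\cX$ are the inverse-image parts of geometric morphisms (pullback of \'etale maps being \'etale), the diagram $T\mapsto \Sh(T_{\et})$ is a diagram in $\Topi$, and its colimit there has underlying $\infty$-category precisely $\lim_{T\to\cX}\Sh(T_{\et})$. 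Hence $\cX^{\mathfrak{DM}_{rep}}_{\et}$ is an $\infty$-topos and
$$\cX^{\mathfrak{DM}_{rep}}_{\et} \simeq \underset{T \to \cX}{\colim}\, \Sh(T_{\et})$$
in $\Topi$, as claimed.

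The main obstacle I anticipate is bookkeeping the two-variance carefully: the diagram $T\mapsto \Sh(T_{\et})$ must be set up so that its colimit in $\Topi$ and its limit in $\widehat{\mathbf{Cat}}_\i$ are compared by the \emph{same} functoriality as in Lemma \ref{lem:limittop}, i.e. the equivalence $\Sh(T_{\et})\simeq T^{\mathfrak{DM}_{rep}}_{\et}$ must be natural in $T$ for the inverse-image/pullback functors, not merely objectwise. I would verify this naturality by noting that under the identification $\Sh(T_{\et}) \simeq \mathfrak{DM}^{\et}_T$, the inverse image along $\varphi\colon S\to T$ is base change of \'etale Deligne--Mumford stacks, which matches the restriction functor $\varphi^*$ on $T^{\mathfrak{DM}_{rep}}_{\et}$ appearing in Lemma \ref{lem:limittop}; this is a direct unwinding of the definitions and the compatibility of $\chi$ with colimits established there. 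Once naturality is in hand, the corollary follows formally, and no further calculation is needed.
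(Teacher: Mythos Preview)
Your approach is essentially identical to the paper's: use Lemma~\ref{lem:limittop} to present $\cX^{\mathfrak{DM}_{rep}}_{\et}$ as $\lim_{T\to\cX} T^{\mathfrak{DM}_{rep}}_{\et}$, identify each $T^{\mathfrak{DM}_{rep}}_{\et}$ with $\Sh(T_{\et})$, and then invoke \cite[Proposition~6.3.2.3]{htt} to recognize this limit of underlying $\infty$-categories as the colimit in $\Topi$.

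One small correction: your parenthetical justification for $T^{\mathfrak{DM}_{rep}}_{\et}\simeq \Sh(T_{\et})$ is not right. It is \emph{not} true that every \'etale map from a higher Deligne--Mumford stack to a scheme $T$ is representable by an algebraic space; for instance $BG\to T$ for $G$ a finite \'etale group scheme is \'etale but not $0$-truncated. The correct reason is simpler: by definition, $T^{\mathfrak{DM}_{rep}}_{\et}$ consists of maps $\cY\to T$ whose pullback to any scheme is a higher Deligne--Mumford stack \'etale over that scheme; since $T$ is itself a scheme, this condition says exactly that $\cY\to T$ lies in $\mathfrak{DM}^{\et}_T$, which is $\Sh(T_{\et})$. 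You may be conflating $\cX^{\mathfrak{DM}_{rep}}_{\et}$ with the smaller category $\cX^{rep}_{\et}$ introduced later (representable by algebraic spaces). This does not affect the validity of your overall argument.
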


\begin{proof}
By \cite[Proposition 6.3.2.3]{htt}, the $\i$-category $\Topi$ of $\i$-topoi has small colimits and they are computed as limits of the underlying $\i$-categories \emph{in the opposite direction,} using the inverse-image functors. Therefore, the underlying $\i$-category of the above colimit of $\i$-topoi is $$\underset{T \to \cX} \lim \Sh\left(T_{\et}\right).$$ However, for all $T,$ there is a canonical equivalence of $\i$-categories $$\Sh\left(T_{\et}\right) \simeq T^{\mathfrak{DM}_{rep}}_{\et}.$$ The result now follows from Lemma \ref{lem:limittop}.
\end{proof}

\begin{remark}
Unwinding \cite[Definition 2.27]{etalehomotopy}, we see that the $\i$-topos $\cX^{\mathfrak{DM}_{rep}}_{\et}$ is equivalent to $\Sh\left(\cX_{\et}\right),$ and hence the shape of $\cX^{\mathfrak{DM}_{rep}}_{\et}$ is precisely the \'etale homotopy type of $\cX.$
\end{remark}

\begin{definition}
Let $\cX^{rep}_{\et}$ denote the subcategory of $\cX^{\mathfrak{DM}_{rep}}_{\et}$ spanned by those \'etale maps $\cY \to \cX$ which are representable by algebraic spaces.
\end{definition}

\begin{remark}
By the proof of Lemma \ref{lem:limittop}, $\cX^{rep}_{\et}$ is precisely the subcategory of $0$-truncated objects.
\end{remark}

\begin{lemma}\label{lem:nhyper}
Let $G$ be an $n$-truncated object in $\cX^{\mathfrak{DM}_{rep}}_{\et}$ for $n< \infty,$ then there exists a simplicial object $$\Delta^{op} \to \cX^{rep}_{\et}/G$$ such that the induced functor $$\Delta^{op} \to \cX^{\mathfrak{DM}_{rep}}_{\et}/G$$ is a hypercover.
\end{lemma}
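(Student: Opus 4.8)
Let $G$ be an $n$-truncated object of $\cX^{\mathfrak{DM}_{rep}}_{\et}$. The plan is to produce a hypercover $\Delta^{op} \to \cX^{\mathfrak{DM}_{rep}}_{\et}/G$ by $0$-truncated objects, i.e. by objects of $\cX^{rep}_{\et}/G$, by the standard device of iteratively ``covering the diagonals'' of the Postnikov-type filtration of $G$. I would work relative to $G$ throughout, replacing $\cX$ by $G$: since $\cX^{\mathfrak{DM}_{rep}}_{\et}/G \simeq G^{\mathfrak{DM}_{rep}}_{\et}$ (both are the small \'etale $\i$-topos of the Deligne-Mumford stack $G$, using Lemma \ref{lem:limittop} and the Remark identifying $\cX^{\mathfrak{DM}_{rep}}_{\et}$ with $\Sh(\cX_{\et})$), and the $0$-truncated objects of $G^{\mathfrak{DM}_{rep}}_{\et}$ are exactly those \'etale maps representable by algebraic spaces, it suffices to prove: in the $\i$-topos $G^{\mathfrak{DM}_{rep}}_{\et}$, the terminal object admits an effective hypercover whose terms lie in the full subcategory of $0$-truncated objects.

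**Key steps.** First, I would invoke the fact that $G^{\mathfrak{DM}_{rep}}_{\et}$ is an $\i$-topos (Corollary \ref{cor:colimtop}) and that, since $G$ is a (higher) Deligne-Mumford stack \'etale over an affine $T$, it admits an \'etale surjection $U_0 \to G$ from an algebraic space; indeed one can take $U_0$ to be a disjoint union of affine schemes. The map $U_0 \to G$ is a $0$-truncated object of $G^{\mathfrak{DM}_{rep}}_{\et}$ covering the terminal object. Second, I would run the standard hypercover construction: given the $0$-truncated $U_0$, form its \v{C}ech nerve $\check{C}(U_0 \to G)$; its terms $U_0^{\times_G (k+1)}$ need not be $0$-truncated (this is where the non-representability of the diagonal of $G$ enters — $G$ is not an algebraic space, so fiber products of algebraic spaces over $G$ are only higher DM stacks), but each is $(n-1)$-truncated, because the fiber $U_0 \times_G U_0 \to U_0$ has $(n-1)$-truncated fibers (the loop object of the $n$-truncated $G$). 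Now proceed by induction on $n$: by the inductive hypothesis each $U_0^{\times_G(k+1)}$, being $(n-1)$-truncated, admits a hypercover by $0$-truncated objects, and one assembles these (coskeletally, via the generalized \v{C}ech/Reedy construction as in \cite[\S6.5.3]{htt} or \cite[\S7.2.1]{htt}) into a single simplicial object over $G$ whose terms are $0$-truncated and which is a hypercover. The base case $n=0$ is trivial: $G$ is already $0$-truncated, so it is representable by an algebraic space, and the constant simplicial object is the desired hypercover — or more simply one takes $U_0 = G$.

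**Alternative, cleaner route.** Rather than an explicit induction one can argue abstractly: in any $\i$-topos $\cE$, if $\cE_0 \subseteq \cE$ is a full subcategory closed under finite limits which generates $\cE$ under colimits and contains an effective epimorphism onto $1$, then $1$ admits a hypercover by objects of $\cE_0$; apply this with $\cE = G^{\mathfrak{DM}_{rep}}_{\et}$ and $\cE_0 = \cX^{rep}_{\et}/G$ (closed under finite limits since fiber products of algebraic spaces over an algebraic space are algebraic spaces — and over $G$ the relevant fiber products, being pullbacks of \'etale representable maps, stay representable). One then needs $n$-truncatedness of $G$ only to ensure the construction terminates, i.e. that the $n$-coskeleton suffices, so that one indeed gets a genuine hypercover (a map out of a simplicial diagram, inducing an equivalence on $n$-truncations and hence, by the truncatedness of $G$, an equivalence). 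I would present the inductive version as the main line since it makes the role of the truncation hypothesis transparent.

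**Main obstacle.** The crux is handling the fiber products in the \v{C}ech nerve: because $G$ is a higher DM stack and not an algebraic space, $U_0 \times_G U_0$ is not itself $0$-truncated, so one genuinely needs the inductive ``resolve-the-diagonal'' step rather than a one-shot \v{C}ech cover. Keeping careful track that the resolutions of the various $U_0^{\times_G(k+1)}$ are compatible across the simplicial maps — so that they glue into a single bi-simplicial object whose diagonal is the desired hypercover — is the technical heart; this is exactly the content of the coskeletal refinement procedure for hypercovers, and I would cite \cite[Lemma 7.2.1.9 and its proof]{htt} (or the analogous statement in \cite[\S6.5.3]{htt}) to discharge it, checking only that at each stage one may choose the covering objects inside $\cX^{rep}_{\et}$, which holds because algebraic spaces \'etale over an algebraic space are generated by affine schemes under coproducts and $\cX^{rep}_{\et}$ is closed under the relevant finite limits.
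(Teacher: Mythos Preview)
Your proposal has a genuine gap at the very first step. You claim that $G$ is ``a (higher) Deligne-Mumford stack \'etale over an affine $T$'' and therefore admits an \'etale surjection $U_0 \to G$ from an algebraic space. But this is false in the setting of the lemma: $\cX$ is an arbitrary sheaf of groupoids with affine diagonal (the intended application is $\cX = \radice{\i}{X}$, which is \emph{not} algebraic), and an object $G \in \cX^{\mathfrak{DM}_{rep}}_{\et}$ is a map $\cY \to \cX$ whose pullback to each affine $T$ is a higher DM stack \'etale over $T$. The total space $\cY$ is not a DM stack in general --- indeed the terminal object of $\cX^{\mathfrak{DM}_{rep}}_{\et}$ is $\cX$ itself. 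So you cannot simply invoke the existence of an atlas for $G$.

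The paper's proof confronts exactly this point. It constructs a universal scheme $W := \coprod_{T,\, t \in \pi_0 \cX(T)} T$ with a canonical map $\rho\colon W \to \cX$ through which every $T \to \cX$ factors up to equivalence. Over the \emph{scheme} $W$, the pullback $G \times_\cX W$ \emph{is} a genuine DM $n$-stack, so the standard hypercover-by-algebraic-spaces argument you sketch does apply --- but only there. One then pulls the resulting hypercover $U_\bullet$ back along each $\lambda_f\colon T \hookrightarrow W$ to obtain compatible hypercovers of $G \times_\cX T$ over all $T$, and these assemble via the limit description of Lemma~\ref{lem:limittop} into a hypercover of $G$ by $0$-truncated objects. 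Your ``alternative, cleaner route'' has the same gap: it presupposes that $\cX^{rep}_{\et}/G$ contains an effective epimorphism onto $G$, but producing such a compatible system of atlases across all $T \to \cX$ is precisely what the construction of $W$ accomplishes, and asserting that the $0$-truncated objects generate under colimits would be circular (this is what the lemma is used to prove in Lemma~\ref{lem:bign}).
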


\begin{proof}
For each $T \in \Aff',$ and for each $t \in \pi_0\left(\cX\left(T\right)\right),$ choose a map $f_t:T \to \cX$ such that $\pi_0\left(f\right)=t.$ Let $$W=\underset{t \in \pi_0\left(\cX\left(T\right)\right),T \in \Aff'} \coprod T.$$ Then $W$ is a scheme and there is a canonical map $\rho:W \to \cX.$ Moreover, every map from an affine scheme $T \in \Aff'$ factors through $\rho$ up to equivalence. Then $G \times_\cX W \to W$ is an \'etale map from a Deligne-Mumford $n$-stack. We therefore can find a hypercover $U_\bullet$ of $G \times_\cX W$ over $W$ such that $U_k$ is an algebraic space \'etale over $W$ for all $k.$ For any $f:T \to \cX,$ since $f$ factors through $W$ up to equivalence, we have that 
$\lambda_f^*\left(U_\bullet\right)$ is a hypercover of $G \times_\cX T \to T$ by algebraic spaces \'etale over $T,$ where $\lambda_f:T \hookrightarrow W$ is the inclusion into the coproduct. It follows that $U_\bullet$ induces a canonical hypercover of $\theta\left(G\right)$ by $0$-truncated objects. The result now follows from Lemma \ref{lem:limittop}.
\end{proof}

\begin{corollary}\label{cor:epiref}
Any epimorphism $\cY' \to \cY$ in $\cX^{\mathfrak{DM}_{rep}}_{\et}$ can be refined by an epimorphism $U \to \cY$ with $U$ in $\cX^{rep}_{\et}.$
\end{corollary}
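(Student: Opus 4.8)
The plan is to reduce an arbitrary epimorphism $\cY' \to \cY$ in $\cX^{\mathfrak{DM}_{rep}}_{\et}$ to the situation handled by Lemma \ref{lem:nhyper}. First I would recall that $\cX^{\mathfrak{DM}_{rep}}_{\et} \simeq \Sh\left(\cX_{\et}\right)$ is an $\i$-topos, so every object is a colimit of $0$-truncated ones (or at least is covered by an $n$-truncated object after truncating appropriately); more concretely, since the subcategory $\cX^{rep}_{\et}$ of $0$-truncated objects generates $\cX^{\mathfrak{DM}_{rep}}_{\et}$ under colimits, there is a canonical epimorphism $V \to \cY$ with $V \in \cX^{rep}_{\et}$ (take the coproduct of all maps $U \to \cY$ with $U$ $0$-truncated, or use that an $\i$-topos is generated by a small full subcategory). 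Then I would form the composite $V \times_{\cY} \cY' \to V$, which is an epimorphism (epimorphisms are stable under pullback in an $\i$-topos), and $V \times_{\cY} \cY'$ lives in $\cX^{\mathfrak{DM}_{rep}}_{\et}/V$.

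Next, I would want a map $U \to \cY$ with $U \in \cX^{rep}_{\et}$ that factors through $\cY'$. The object $V \times_{\cY} \cY' \to V$ is an $\i$-Deligne--Mumford stack \'etale over $V$, hence an object of $V^{\mathfrak{DM}_{rep}}_{\et}$. By Lemma \ref{lem:nhyper} applied over $V$ (or directly by the hypercover construction in its proof, which does not actually need $n$-truncatedness for the statement that there is a covering $0$-truncated object — one just takes $U_0$), there is an epimorphism $U \to V \times_{\cY} \cY'$ with $U \in V^{rep}_{\et} \subseteq \cX^{rep}_{\et}$: indeed an \'etale map from an $\i$-Deligne--Mumford stack admits an \'etale surjection from an algebraic space. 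Composing, we get an epimorphism $U \to V \times_{\cY} \cY' \to \cY' \to \cY$, where the composite $U \to \cY$ is an epimorphism (composite of epimorphisms) with $U \in \cX^{rep}_{\et}$, and by construction it factors through $\cY' \to \cY$.

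The main obstacle I expect is making precise the claim that there is an epimorphism onto $\cY$ (respectively onto an \'etale $\i$-DM stack) from a $0$-truncated object of $\cX^{\mathfrak{DM}_{rep}}_{\et}$, i.e.\ from an algebraic space \'etale over $\cX$. For $\cY$ itself this is the statement, essentially from Lemma \ref{lem:limittop} and the remark following it, that $\cX^{rep}_{\et}$ generates the $\i$-topos $\cX^{\mathfrak{DM}_{rep}}_{\et}$ under colimits; one has to check the site presentation from Corollary \ref{cor:colimtop} realizes $\cX^{\mathfrak{DM}_{rep}}_{\et}$ as sheaves on a site whose objects are $0$-truncated, so the representable sheaves (which are $0$-truncated) form a generating set. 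For an \'etale map $\cZ \to V$ from an $\i$-DM stack with $V$ an affine scheme (or algebraic space), one uses that $\cZ$ admits an \'etale atlas $\Spec A \to \cZ$ which is then \'etale over $V$ and representable by an algebraic space, giving the required epimorphism. Once these two generation statements are in place, the corollary follows formally by the pullback-and-compose argument above; the bookkeeping about which maps are epimorphisms and which factorizations are canonical is routine given Lemmas \ref{lem:limittop} and \ref{lem:nhyper}.
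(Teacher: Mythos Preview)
Your proof is correct and uses the same underlying idea as the paper, but it is more elaborate than necessary. The paper offers no explicit proof; the corollary is meant to follow immediately from (the proof of) Lemma~\ref{lem:nhyper}: apply the $0$-level of that construction directly to $\cY'$ (the $n$-truncatedness hypothesis is only needed to build the higher levels of the hypercover, not to find an \'etale atlas by algebraic spaces), obtaining an epimorphism $U \to \cY'$ with $U \in \cX^{rep}_{\et}$, and then compose with $\cY' \to \cY$.

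Your detour through an auxiliary epimorphism $V \to \cY$ with $V \in \cX^{rep}_{\et}$ is redundant: step~(a) and step~(c) in your outline are the \emph{same} assertion (every object of $\cX^{\mathfrak{DM}_{rep}}_{\et}$ admits an epimorphism from an object of $\cX^{rep}_{\et}$), just applied to $\cY$ and then again to $V \times_{\cY} \cY'$. Once you know how to do step~(c)---and you correctly identify that this is the definition of a higher Deligne--Mumford stack having an \'etale atlas, pulled back along the construction of Lemma~\ref{lem:nhyper}---you can apply it directly to $\cY'$ and skip the pullback to $V$. Your caution about circularity (whether $\cX^{rep}_{\et}$ generates under colimits) is warranted for step~(a) as you phrased it, but disappears once you go straight to $\cY'$ via the atlas argument.
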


\begin{lemma}
Let $\cE$ be an $n$-topos for some finite $n.$ Denote by $K$ the following Grothendieck topology on $\cE$: a collection of maps $$\left(f_\alpha:E_\alpha \to E\right)_\alpha$$ is a $K$-cover if and only if the induced map $$\underset{\alpha} \coprod E_\alpha \to E$$ is an epimorphism. Then $$\cE \simeq \Sh_{n-1}\left(\cE,K\right),$$ i.e. $\cE$ is equivalent to the $n$-category of sheaves of $\left(n-1\right)$-groupoids on the large site $\left(\cE,K\right).$ (In other words, a presheaf is a $K$-sheaf if and only if it is representable).
\end{lemma}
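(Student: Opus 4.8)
The plan is to recognize this as essentially a statement about the canonical identification between an $n$-topos and its presheaf-of-$(n{-}1)$-groupoids incarnation on its own canonical (epimorphism) topology, so I would first set up the comparison functor and then verify the sheaf condition and essential surjectivity separately. Concretely, there is always a Yoneda-type functor
$$j \colon \cE \longrightarrow \Psh_{n-1}\left(\cE\right), \qquad E \mapsto \Map_{\cE}\left(-,E\right),$$
where $\Psh_{n-1}$ denotes presheaves of $(n{-}1)$-groupoids, and since $\cE$ is an $n$-topos every object is $(n{-}1)$-truncated, so these mapping spaces are indeed $(n{-}1)$-truncated; $j$ is fully faithful by the Yoneda lemma. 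The first real step is to show $j$ factors through $\Sh_{n-1}\left(\cE,K\right)$, i.e. that each $\Map_{\cE}\left(-,E\right)$ satisfies $K$-descent. This is exactly the statement that colimits in $\cE$ are universal and that a $K$-cover $\coprod_\alpha E_\alpha \to E$, being an effective epimorphism in the $n$-topos $\cE$, exhibits $E$ as the colimit of its associated \v{C}ech nerve (truncated appropriately at level $n-1$); descent in $\cE$ then gives that $\Map_{\cE}\left(-,E\right)$ sends this colimit to the corresponding limit. Here I would invoke the Giraud-type axioms for $n$-topoi (the $n$-categorical analogue of \cite[Theorem 6.1.0.6]{htt}), together with the fact that in an $n$-topos every groupoid object is effective.

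The second step is essential surjectivity: every $K$-sheaf of $(n{-}1)$-groupoids on $\cE$ is representable. Given such a sheaf $F \colon \cE^{op} \to \Spc_{\le n-1}$, one writes it as a colimit of representables in $\Psh_{n-1}\left(\cE\right)$ via the canonical colimit over its category of elements $\int_{\cE} F$, and then forms the corresponding colimit $E_F \defeq \colim_{(E,s) \in \int_{\cE} F} E$ inside $\cE$. There is a canonical comparison map $F \to j\left(E_F\right)$, and I would argue it is an equivalence by checking it on each object of $\cE$: since $j$ preserves the relevant colimits of representables up to $K$-local equivalence (which is where the sheaf condition on $F$ and universality of colimits both enter) and since $\cE$ is itself an $n$-topos so that the colimit $E_F$ behaves well, the map $F \to j\left(E_F\right)$ is a $K$-local equivalence between $K$-sheaves, hence an equivalence. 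Combined with full faithfulness of $j$, this yields $\cE \simeq \Sh_{n-1}\left(\cE,K\right)$.

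**The main obstacle** I anticipate is the truncation bookkeeping in the second step: one must be careful that the colimit $\colim_{\int_{\cE} F} E$ computed in $\cE$ (which is $n$-truncated by hypothesis) agrees with the colimit computed in $\Psh_{n-1}\left(\cE\right)$ after $K$-sheafification, and that $K$-sheafification does not leave the world of $(n{-}1)$-truncated presheaves or introduce discrepancies — this is precisely the place where the finiteness of $n$ is used, and it fails badly for $n=\infty$. The cleanest route is probably to observe that $\cE$, being an $n$-topos, \emph{is} by definition (or by the $n$-categorical Giraud theorem) a left exact accessible localization of $\Psh_{n-1}\left(\cD\right)$ for some small $n$-category $\cD$, and then to identify the localization with exactly the $K$-topology by showing the $K$-local objects are precisely the objects in the image of this localization; Lemma \ref{lem:limittop} and Corollary \ref{cor:epiref} supply the needed control on epimorphisms and their refinements in the case of interest ($\cE = \cX^{\mathfrak{DM}_{rep}}_{\et}$, which is an $n$-topos for finite $n$ in the $n$-truncated situation). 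A secondary, more mundane obstacle is making sure the coproduct $\coprod_\alpha E_\alpha$ used in the definition of $K$ is the coproduct in $\cE$ and interacts correctly with the $K$-topology — but this is handled by the same universality-of-colimits input, together with the observation (cf.\ Corollary \ref{cor:coprodsh2}) that finite coproducts are insensitive to the relevant sheafifications.
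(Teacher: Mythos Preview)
Your first step (that representables are $K$-sheaves) is fine and correctly invokes effectivity of epimorphisms in an $n$-topos. The real problem is in your second step. The colimit $E_F = \colim_{(E,s)\in\int_{\cE} F} E$ is indexed by a \emph{large} diagram (as large as $\cE$ itself), so it need not exist in $\cE$; and even if you posit some accessibility hypothesis to cut it down, you then have to justify that $K$-sheafification on $\Psh_{n-1}(\cE)$ exists and agrees with the reflector onto representables, which is precisely the content of the lemma. Your argument that ``$j$ preserves the relevant colimits up to $K$-local equivalence'' is circular without this. The obstacle you flag as a truncation issue is really a \emph{size} issue, and this is what the paper's proof is organized around.

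The ``cleanest route'' you sketch at the end is in fact what the paper does, and you should promote it to the main argument. The paper chooses Grothendieck universes $\cU\in\cV$, picks a $\cU$-small site $(\sC,J)$ with $\Sh_{n-1}(\sC,J)\simeq\cE$, and observes that restriction/left Kan extension along $\sC\hookrightarrow\cE$ followed by $J$-sheafification exhibits a left exact localization of $\widehat{\Psh}_{n-1}(\cE)$ onto (the $\cV$-enlargement of) $\cE$. Since for finite $n$ every accessible left exact localization of presheaves of $(n{-}1)$-groupoids is topological, this localization is $K'$-sheaves for \emph{some} topology $K'$. The remaining work, which your sketch omits entirely, is the explicit identification $K'=K$: one checks that a sieve on $E$ is $K'$-covering iff its restriction to $\sC$ is $J$-covering, iff its sheafification is all of $y(E)$, iff the family is jointly epimorphic in $\cE$. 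This is where finiteness of $n$ genuinely enters (for $n=\infty$ the localization need not be topological; one only gets hypersheaves), not in the truncation bookkeeping you describe.

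Finally, the references to Lemma \ref{lem:limittop} and Corollary \ref{cor:epiref} are misplaced: this lemma is stated and proved for an arbitrary $n$-topos $\cE$ and does not use any input specific to $\cX^{\mathfrak{DM}_{rep}}_{\et}$ (which is in any case an $\infty$-topos, not an $n$-topos; the lemma is applied later to truncations of it).
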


\begin{proof}
Let $\cU$ be the Grothendieck universe of small sets. Let $\cV$ be a Grothendieck universe such that $\cU \in \cV.$ Then $\cE$ is $\cV$-small. Choose a $\cU$-small subcanonical site $\left(\sC,J\right)$ such that $\Sh_{n-1}\left(\sC,J\right) \simeq \cE,$ with $\sC$ an $n$-category. Denote by $i:\sC \hookrightarrow \cE$ the fully faithful inclusion. Then, by restriction and left Kan extension, the $n$-category of presheaves of $\cV$-small $\left(n-1\right)$-groupoids $\widehat{\Psh}_{n-1}\left(\sC\right)$ is a left exact localization of $\widehat{\Psh}_{n-1}\left(\cE\right).$ Furthermore, taking $J$-sheaves gives a further left exact localization. So, there is some Grothendieck topology $K'$ on $\cE$ such that the $n$-category of $\cV$-small $K$-sheaves of $\left(n-1\right)$-groupoids on $\cE$ is equivalent to the $n$-category of $\cV$-small $J$-sheaves of $\left(n-1\right)$-groupoids on $\sC.$ Moreover, it is easy to check that this restricts to an equivalence on the subcategories thereof on those sheaves which take values in $\cU$-small of $\left(n-1\right)$-groupoids. Hence, we get that $$\Sh_{n-1}\left(\cE,K'\right) \simeq \Sh_{n-1}\left(\sC,J\right) \simeq \cE.$$ It suffices to show that $K'=K.$ For this, it suffices to show that for a collection of maps $\left(f_\alpha:E_\alpha \to E\right)_\alpha=U,$ that the associated sieve $S_U \subset y\left(E\right)$ is a $K'$-covering sieve if and only if $$\underset{\alpha} \coprod E_\alpha \to E$$ is an epimorphism. By definition, $S_U$ is a $K'$-covering sieve if and only if its restriction to $\sC$ is a $J$-covering sieve, which is if and only if its $J$-sheafification is equivalent to the restriction of $y\left(E\right)$ to $\sC.$ Notice that this is true if and only if for all $C$ in $\sC,$ and all maps $g:C \to E,$ $g^*S_U$ is a $K$-covering sieve of $C.$ But note that $g^*S_U=S_{g^*U},$ i.e. $g^*S_U$ is the sieve associated to $$\left(E_\alpha\times_E C \to C\right)_\alpha,$$ hence is a covering sieve if and only if, in presheaves, $$\underset{\alpha} \coprod i^*y\left(E_\alpha\right) \times_{i^*y\left(E\right)} y\left(C\right) \to y\left(C\right)$$ admits local sections with respect to $J,$ i.e. if and only if its sheafification is an epimorphism in $\Sh_{n-1}\left(\sC,J\right),$ but since $$\Sh_{n-1}\left(\sC,J\right) \simeq \cE,$$ this is if and only if  $$\underset{\alpha} \coprod E_\alpha\times_E C \to C,$$ is an epimorphism in $E,$ for all $C,$ which is if and only if $$\underset{\alpha} \coprod E_\alpha \to E$$ is an epimorphism.
\end{proof}

Let $i:\cX^{rep}_{\et} \hookrightarrow \cX^{\mathfrak{DM}_{rep}}_{\et}$ be the canonical inclusion. Equip $\cX^{rep}_{\et}$ with the Grothendieck topology coming from the pretopology generated by jointly epimorphic families. Denote by $\widehat{\Sh}\left(\cX^{\mathfrak{DM}_{rep}}_{\et}\right)$ the $\i$-category of sheaves on $\cX^{\mathfrak{DM}_{rep}}_{\et}$ with values in $\cV$-small spaces, i.e. limit preserving functors $$\left(\cX^{\mathfrak{DM}_{rep}}_{\et}\right)^{op} \to \widehat{\Spc}.$$
Consider the left Kan extension
$$\xymatrix@C=2.5cm{\widehat{\Psh}\left(\cX^{rep}_{\et}\right) \ar@{-->}[r]^-{\Lan_y\left(y\circ i\right)} & \widehat{\Sh}\left(\cX^{\mathfrak{DM}_{rep}}_{\et}\right) \\
\cX^{rep}_{\et} \ar[u]^-{y} \ar[ru]_-{ y\circ i} & }$$
where we are taking $\cV$-small presheaves of spaces (i.e. ``large spaces''). By \cite[Proposition 6.1.5.2]{htt}, $i_!:=\Lan_yi$ is left exact. By the Yoneda Lemma, it has a right adjoint $i^*,$ given by restriction. Since $i_!$ preserves effective epimorphisms, it follows that $i^*$ sends sheaves to sheaves. Hence there is an induced geometric morphism of $\i$-topoi in the universe $\cV$
$$\widehat{\Sh}\left(\cX^{\mathfrak{DM}_{rep}}_{\et}\right) \to \widehat{\Sh}\left(\cX^{rep}_{\et}\right).$$ 

\begin{lemma}\label{lem:bign}
The geometric morphism $\widehat{\Sh}\left(\cX^{\mathfrak{DM}_{rep}}_{\et}\right) \to \widehat{\Sh}\left(\cX^{rep}_{\et}\right)$ induces an equivalences between the subcategories of $n$-truncated objects, for all $n < \i.$
\end{lemma}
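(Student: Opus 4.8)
The plan is to show that both sides, when restricted to $n$-truncated objects, compute the same thing, by reducing to the site $\cX^{rep}_{\et}$ and invoking the preceding lemma about $n$-topoi. First I would unwind what $\widehat{\Sh}\left(\cX^{\mathfrak{DM}_{rep}}_{\et}\right)$ looks like on $n$-truncated objects: a presheaf on $\cX^{\mathfrak{DM}_{rep}}_{\et}$ is a sheaf (i.e. limit-preserving functor) exactly when it has descent for epimorphisms in $\cX^{\mathfrak{DM}_{rep}}_{\et}$, and by Corollary \ref{cor:epiref} every such epimorphism can be refined by one with source in $\cX^{rep}_{\et}$. This should let me identify the subcategory of $n$-truncated objects of $\widehat{\Sh}\left(\cX^{\mathfrak{DM}_{rep}}_{\et}\right)$ with a suitable category of sheaves on the subsite $\cX^{rep}_{\et}$ equipped with the jointly-epimorphic-families topology. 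Indeed, restriction along $i$ is the functor $i^*$ constructed just above, and I would argue it is fully faithful on $n$-truncated objects: the counit $i_!i^* \to \id$ is an equivalence on objects coming from $\cX^{rep}_{\et}$, and since every $n$-truncated object of $\cX^{\mathfrak{DM}_{rep}}_{\et}$ admits a hypercover by objects of $\cX^{rep}_{\et}$ by Lemma \ref{lem:nhyper}, a standard descent/colimit argument extends this to all $n$-truncated objects.

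The key steps, in order: (1) observe that $\cX^{\mathfrak{DM}_{rep}}_{\et}$ is an $\i$-topos (Corollary \ref{cor:colimtop}), so by the immediately preceding lemma applied with $\cE = \cX^{\mathfrak{DM}_{rep}}_{\et}$ and the epimorphism topology $K$, we already have $\cX^{\mathfrak{DM}_{rep}}_{\et} \simeq \Sh_{n-1}\left(\cX^{\mathfrak{DM}_{rep}}_{\et}, K\right)$ for truncated levels, i.e. a presheaf on the big site is a sheaf iff representable; (2) use Corollary \ref{cor:epiref} to see that the Grothendieck topology on $\cX^{\mathfrak{DM}_{rep}}_{\et}$ generated by epimorphisms is \emph{generated} by covers with source in the subsite $\cX^{rep}_{\et}$, so that a presheaf on $\cX^{\mathfrak{DM}_{rep}}_{\et}$ is a $K$-sheaf iff its restriction to $\cX^{rep}_{\et}$ is a sheaf for the induced topology and moreover it is the right Kan extension of that restriction; (3) conclude that $i^*$ restricts to an equivalence $\widehat{\Sh}\left(\cX^{\mathfrak{DM}_{rep}}_{\et}\right)_{\le n} \xrightarrow{\ \sim\ } \widehat{\Sh}\left(\cX^{rep}_{\et}\right)_{\le n}$, with inverse the sheafification of $i_!$, where fully faithfulness on the $n$-truncated part uses Lemma \ref{lem:nhyper} to hypercover and the left-exactness of $i_!$ from \cite[Proposition 6.1.5.2]{htt} to push descent through.

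I expect the main obstacle to be step (3): showing that $i^*$ is \emph{fully faithful} (not just essentially surjective) on $n$-truncated objects. The subtlety is that a sheaf $F$ on $\cX^{\mathfrak{DM}_{rep}}_{\et}$ need not a priori be determined by its values on $\cX^{rep}_{\et}$ — one must genuinely use that every $n$-truncated object $G$ of $\cX^{\mathfrak{DM}_{rep}}_{\et}$ receives a hypercover $U_\bullet \to G$ by $0$-truncated objects (Lemma \ref{lem:nhyper}), so that $F(G) \simeq \lim_{\Delta} F(U_\bullet)$ is computed from values at objects of $\cX^{rep}_{\et}$, and that the same formula computes $(i^*F$ sheafified$)$ evaluated back on $G$. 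Checking that the comparison map between these two cosimplicial limits is an equivalence — i.e. that the $n$-truncated sheaf condition on the big site is exactly pulled back from the small one — is where the care is needed, and it is precisely here that the truncatedness hypothesis $n < \i$ is essential, since hypercovers of $n$-truncated objects can be taken $n$-coskeletal and the relevant limits are finite in each degree. Once this is in place, the equivalence on $n$-truncated objects follows formally.
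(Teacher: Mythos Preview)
Your proposal uses the same ingredients as the paper's proof --- Corollary~\ref{cor:epiref} and Lemma~\ref{lem:nhyper} to reduce from the big site to the small one, and the left-exactness of $i_!$ --- and arrives at the same conclusion. The organization differs: you frame things as a comparison-lemma / dense-subsite argument (your step (2)) followed by a direct hypercover computation of $F(G)$ to get fully faithfulness, whereas the paper argues by showing that the unit and counit of the adjunction $i_! \dashv i^*$ are equivalences on $n$-truncated objects. The paper's packaging has one extra idea you do not mention: it constructs a further right adjoint $i_*$ to $i^*$ (via $i_*(\cG)(\cY) = \Map(i^*y(\cY),\cG)$, using that $i^*y$ preserves effective epimorphisms by Corollary~\ref{cor:epiref}), so that $i^*$ preserves colimits. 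With that in hand, the counit $i_!i^* \to \id$ is a transformation between colimit-preserving functors, and Lemma~\ref{lem:nhyper} reduces the check to representables from $\cX^{rep}_{\et}$, where it is obvious. Your route avoids $i_*$ but then in step (3) you must justify directly that the cosimplicial limit over a hypercover computes the sheaf value, which is essentially equivalent work. Also note that your step (2) as stated (``a presheaf is a $K$-sheaf iff its restriction is a sheaf and it is the right Kan extension'') is really the \emph{conclusion} of the lemma rather than an independent input, so be careful not to invoke it before proving it.
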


\begin{proof}
Fix $n < \i.$ The subcategories of $n$-truncated objects are $\left(n+1\right)$-topoi. Therefore,  $\tau_{\le n}\widehat{\Sh}\left(\cX^{\mathfrak{DM}_{rep}}_{\et}\right)$ can be identified with the $\left(n+1\right)$-category $$\widehat{\Sh}_n\left(\cX^{\mathfrak{DM}_{rep}}_{\et}\right)$$ of sheaves of $\cV$-small $n$-groupoids on $\cX^{\mathfrak{DM}}_{rep}$ with respect to the epimorphism topology. It follows from Corollary \ref{cor:epiref}, that for any presheaf of $n$-groupoids $\cF$ on $\cX^{\mathfrak{DM}}_{rep},$ the value of its sheafification on an object $U$ in $\cX^{rep}_{\et}$ is the same as the value of the sheafification of $i^*\cF.$ In particular, since $i^*$ is moreover left exact, it follows that if $\varphi:\cZ \to \cY$ is an effective epimorphism in $\cX^{\mathfrak{DM}}_{rep},$ then $i^*y\left(\varphi\right)$ is an effective epimorphism in $\widehat{\Sh}\left(\cX^{rep}_{\et}\right).$ Consider the functor $$i_*:\widehat{\Sh}_n\left(\cX^{rep}_{\et}\right) \to \widehat{\Psh}_n\left(\cX^{\mathfrak{DM}_{rep}}_{\et}\right)$$ defined by $$i_*\left(\cG\right)\left(\cY\right):=\Map\left(i^*y\left(\cY\right),\cG\right).$$ Then since $i^*$ preserves effective epimorphisms, $i_*$ sends sheaves to sheaves, and we abuse notation and write $$i_*:\widehat{\Sh}_n\left(\cX^{rep}_{\et}\right) \to \widehat{\Sh}_n\left(\cX^{\mathfrak{DM}_{rep}}_{\et}\right).$$ By the Yoneda Lemma $i_*$ is right adjoint to $i^*$ (restricted to $n$-truncated objects). Therefore $i^*$ preserves $\cV$-small colimits. We claim that the unit and the counit of the adjunction $i_! \dashv i^*$ are equivalences. The unit $\eta:id \to i^*i_!$ is a map in $$\Fun^L\left(\widehat{\Sh}\left(\cX^{rep}_{\et}\right),\widehat{\Sh}\left(\cX^{rep}_{\et}\right)\right)$$ so by \cite[Theorem 5.1.5.6 and Proposition 5.5.4.20]{htt} it suffices to check the component of the unit is an equivalence along representables. However, this is clear. Consider now the counit $$\epsilon:i_!i^* \to id.$$ Let $\sD$ be the subcategory of $\widehat{\Sh}\left(\cX^{\mathfrak{DM}_{rep}}_{\et}\right)$ on those objects $A$ for which $\epsilon_A$ is an equivalence. Since both functors $i_!$ and $i^*$ preserves colimits, $\sD$ is closed under colimits. It also contains the essential image of $y \circ i.$ However, Lemma \ref{lem:nhyper} implies that every representable is a colimit of an object in the essential image of $y \circ i,$ and since every sheaf is a colimit of representables, we are done, and we conclude that $i_!$ and $i^*$ restrict to equivalences on the subcategory of $n$-truncated objects.
\end{proof}

\begin{lemma}\label{lem:smalln}
Denote by $\Sh\left(\cX^{rep}_{\et}\right)$ the subcategory of  $\widehat{\Sh}\left(\cX^{rep}_{\et}\right)$ on those sheaves taking values in $\cU$-small spaces. Then $\Sh\left(\cX^{rep}_{\et}\right)$ is an $\i$-topos and there is a canonical geometric morphism $\cX^{\mathfrak{DM}_{rep}}_{\et} \to \Sh\left(\cX^{rep}_{\et}\right)$ which induces an equivalence on the full subcategory of $n$-truncated objects, for all $n< \i.$
\end{lemma}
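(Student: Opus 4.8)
The plan is to exhibit $\Sh(\cX^{rep}_{\et})$ as the honest (that is, $\cU$-small) $\infty$-topos underlying the $\cV$-large sheaf category $\widehat{\Sh}(\cX^{rep}_{\et})$ of the previous lemmas, to build the geometric morphism by the same recipe used just before Lemma \ref{lem:bign}, and then to transport the equivalence of Lemma \ref{lem:bign} down to the $\cU$-level. For the first assertion, the key is that $\cX^{rep}_{\et}$ is essentially $\cU$-small: pulling back along the effective epimorphism $W \to \cX$ from the proof of Lemma \ref{lem:nhyper}, with $W$ a $\cU$-small coproduct of objects of $\Aff'$, an \'etale map to $\cX$ representable by algebraic spaces becomes an \'etale map of algebraic spaces over $W$, and such maps, being locally of finite presentation, form an essentially $\cU$-small category; \'etale descent along $W \to \cX$ then bounds the size of $\cX^{rep}_{\et}$. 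Hence $\Sh(\cX^{rep}_{\et})$ is the category of sheaves of $\cU$-small spaces on a $\cU$-small site, which is an $\infty$-topos.

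For the geometric morphism I would repeat the construction given just before Lemma \ref{lem:bign}, now with $\cU$-small presheaves. Writing $i\colon \cX^{rep}_{\et} \hookrightarrow \cX^{\mathfrak{DM}_{rep}}_{\et}$ for the inclusion, the left Kan extension $\Lan_y i\colon \Psh(\cX^{rep}_{\et}) \to \cX^{\mathfrak{DM}_{rep}}_{\et}$ is left exact by \cite[Proposition 6.1.5.2]{htt}, and since $i$ carries jointly epimorphic families to effective epimorphisms, the functor $\gamma_{*}(\cY) = \Map_{\cX^{\mathfrak{DM}_{rep}}_{\et}}(i(-),\cY)$ sends objects of $\cX^{\mathfrak{DM}_{rep}}_{\et}$ to sheaves --- and to $\cU$-small-valued ones, since $\cX^{\mathfrak{DM}_{rep}}_{\et}$ is locally $\cU$-small. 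Its left adjoint $\gamma^{*}$, the sheafification of $\Lan_y i$, is left exact and colimit-preserving, so $\gamma\colon \cX^{\mathfrak{DM}_{rep}}_{\et} \to \Sh(\cX^{rep}_{\et})$ is a geometric morphism; composing with the Yoneda embedding $\cX^{\mathfrak{DM}_{rep}}_{\et} \hookrightarrow \widehat{\Sh}(\cX^{\mathfrak{DM}_{rep}}_{\et})$ recovers the geometric morphism of Lemma \ref{lem:bign}.

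For the equivalence on $n$-truncated objects, fix $n < \infty$. The preliminary point is the identification $\tau_{\le n}\cX^{\mathfrak{DM}_{rep}}_{\et} \simeq \Sh_{n}(\cX^{\mathfrak{DM}_{rep}}_{\et}, K)$, the $\cU$-small-valued sheaves of $n$-groupoids for the epimorphism topology $K$: by Corollary \ref{cor:epiref} together with Lemma \ref{lem:nhyper}, the subcategory $\cX^{rep}_{\et}$ of $0$-truncated objects is cover-dense in $\cX^{\mathfrak{DM}_{rep}}_{\et}$, so the comparison lemma identifies $n$-sheaves on $\cX^{\mathfrak{DM}_{rep}}_{\et}$ with $n$-sheaves on $\tau_{\le n}\cX^{\mathfrak{DM}_{rep}}_{\et}$ (which is closed under finite limits in $\cX^{\mathfrak{DM}_{rep}}_{\et}$), and the lemma above characterizing an $m$-topos $\cE$ as $\Sh_{m-1}(\cE, K)$, applied with $m = n+1$ and $\cE = \tau_{\le n}\cX^{\mathfrak{DM}_{rep}}_{\et}$, identifies that in turn with $\tau_{\le n}\cX^{\mathfrak{DM}_{rep}}_{\et}$ itself. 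Under this identification, $\gamma$ restricted to $\tau_{\le n}$ is precisely the $\cU$-small-valued part of the geometric morphism of Lemma \ref{lem:bign} on $n$-truncated objects, so I would just re-run that proof in the universe $\cU$: the unit $\mathrm{id} \to \gamma_{*}\gamma^{*}$ is an equivalence since it is so on representables by full faithfulness of $i$, and the counit $\gamma^{*}\gamma_{*} \to \mathrm{id}$ is an equivalence since the full subcategory of $\tau_{\le n}\cX^{\mathfrak{DM}_{rep}}_{\et}$ on which it is one is closed under colimits, contains the image of $i$, and by Lemma \ref{lem:nhyper} every $n$-truncated object is a colimit of objects in that image. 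This produces inverse equivalences $\tau_{\le n}\Sh(\cX^{rep}_{\et}) \simeq \tau_{\le n}\cX^{\mathfrak{DM}_{rep}}_{\et}$.

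The hard part is the universe and size bookkeeping concentrated in the last step --- checking that $\cX^{rep}_{\et}$ is genuinely essentially $\cU$-small, and, above all, that the identification $\tau_{\le n}\cX^{\mathfrak{DM}_{rep}}_{\et} \simeq \Sh_{n}(\cX^{\mathfrak{DM}_{rep}}_{\et}, K)$ holds with $\cU$-small coefficients, so that Lemma \ref{lem:bign} really computes the truncations of the honest $\infty$-topos $\cX^{\mathfrak{DM}_{rep}}_{\et}$ rather than of its $\cV$-large sheaf category. Once that is in place, re-running the proof of Lemma \ref{lem:bign} at the $\cU$-level is routine, since everything in sight is generated under $\cU$-small colimits by the $\cU$-small representables.
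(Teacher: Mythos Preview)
Your overall strategy---build the geometric morphism from $i_!$ and then transport Lemma \ref{lem:bign} down a universe---matches the paper's. The gap is in your first step: the claim that $\cX^{rep}_{\et}$ is essentially $\cU$-small is false. By the remark immediately following its definition, $\cX^{rep}_{\et}$ is the full subcategory of $0$-truncated objects of the $\i$-topos $\cX^{\mathfrak{DM}_{rep}}_{\et}$, hence a $1$-topos, and in particular closed under arbitrary $\cU$-small coproducts. Your argument via pullback to $W$ only shows that each object has essentially small \'etale-local data, not that the category of such objects is small; concretely, an arbitrary disjoint union of algebraic spaces \'etale over $W$ is again an algebraic space \'etale over $W$, so ``locally of finite presentation'' imposes no global cardinality bound.

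The paper handles this by replacing the large site with a small one via the comparison lemma rather than arguing it was small to begin with. One chooses a $\cU$-small strongly generating subcategory $l:\sC \hookrightarrow \cX^{rep}_{\et}$ closed under finite limits, with induced topology $J$, so that $\Sh_J(\sC,\Set) \simeq \cX^{rep}_{\et}$. Working in the universe $\cV$ (where both $\sC$ and $\cX^{rep}_{\et}$ are small), \cite[Proposition 6.4.5.7]{htt} gives an equivalence $l^*:\widehat{\Sh}(\cX^{rep}_{\et}) \stackrel{\sim}{\to} \widehat{\Sh}_J(\sC)$; since both $l^*$ and its inverse $l_!$ are left adjoints, this equivalence restricts to $\cU$-small-valued sheaves, yielding $\Sh(\cX^{rep}_{\et}) \simeq \Sh_J(\sC)$, which is visibly an $\i$-topos. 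The geometric morphism then comes from the left Kan extension of $i\circ l:\sC \hookrightarrow \cX^{\mathfrak{DM}_{rep}}_{\et}$, which is identified with the restriction of the functor $i_!$ of Lemma \ref{lem:bign} to $\Sh_J(\sC)\subset \widehat{\Sh}(\cX^{rep}_{\et})$---so the equivalence on $n$-truncated objects is read off directly from Lemma \ref{lem:bign} without needing your separate identification $\tau_{\le n}\cX^{\mathfrak{DM}_{rep}}_{\et} \simeq \Sh_n(\cX^{\mathfrak{DM}_{rep}}_{\et},K)$ or a re-run of its proof.
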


\begin{proof}
Let $l:\sC \hookrightarrow \cX^{rep}_{\et}$ be a small subcategory of the topos $\cX^{rep}_{\et}$ which is strongly generating, and which has finite limits. Then there is a Grothendieck topology $J$ on $\sC$ such that $\Sh_J\left(\sC,\Set\right)\simeq \cX^{rep}_{\et}.$ Since $\sC$ has finite limits, and both $\sC$ and $\cX^{rep}_{\et}$ are $\cV$-small, by the proof of \cite[Proposition 6.4.5.7]{htt}, it follows that $$l^*:\widehat{\Sh}\left(\cX^{rep}_{\et}\right) \stackrel{\sim}{\longrightarrow} \widehat{\Sh}_J\left(\sC\right).$$ Since $l^*$ is an equivalence, its left adjoint $l_!$ is also its right adjoint. It follows that $l^*$ restricts to an equivalence between the full subcategories of sheaves with values in $\cU$-small groupoids, i.e. $$l^*:\Sh\left(\cX^{rep}_{\et}\right) \stackrel{\sim}{\longrightarrow} \Sh_J\left(\sC\right).$$ Hence, $\Sh\left(\cX^{rep}_{\et}\right)$ is an $\i$-topos. Consider the composite $$i \circ l:\sC \hookrightarrow \cX^{\mathfrak{DM}_{rep}}_{\et}.$$ Its left Kan extension along the Yoneda embedding is a colimit preserving left exact functor $$\Sh_J\left(\sC\right) \to \cX^{\mathfrak{DM}_{rep}}_{\et}.$$ But it can be identified with the restriction of $i_!$ to $$\Sh_J\left(\sC\right) \simeq \Sh\left(\cX^{rep}_{\et}\right) \subset \widehat{\Sh}\left(\cX^{rep}_{\et}\right).$$ The result now follows from Lemma \ref{lem:bign}.
\end{proof}

\begin{theorem}
\label{protruncated}
Let $X$ be a fine saturated log scheme. Then the pro-truncated shape of $\Sh\left(\radice{\i}{X}^{rep}_{\et}\right)$ is equivalent to the pro-truncation of the \'etale homotopy type of $\radice{\i}{X}.$
\end{theorem}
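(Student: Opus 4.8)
Throughout write $\cX=\radice{\i}{X}$, $\cE=\cX^{\mathfrak{DM}_{rep}}_{\et}$ and $\cF=\Sh\left(\cX^{rep}_{\et}\right)$; we may assume, reducing by an \'etale-hypercover descent argument as in the proof of Theorem~\ref{thm:prothesame} if necessary, that $\cX$ has affine diagonal, so that Lemma~\ref{lem:smalln} applies. By the Remark following Corollary~\ref{cor:colimtop} we have $\cE\simeq\Sh\left(\cX_{\et}\right)$, so $\Shape\left(\cE\right)=\Pi^{\et}_\i\left(\radice{\i}{X}\right)$ is the \'etale homotopy type of the infinite root stack, while $\cF$ is by construction the $\i$-topos $\Sh\left(\radice{\i}{X}^{rep}_{\et}\right)$. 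The plan is to show that the map $\Shape\left(\phi\right)\colon\Shape\left(\cE\right)\to\Shape\left(\cF\right)$ induced by the canonical geometric morphism $\phi\colon\cE\to\cF$ of Lemma~\ref{lem:smalln} becomes an equivalence after applying $\left(\blank\right)^{\wedge}_{<\i}$.

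First I would record how pro-truncation acts on shapes. Viewing pro-spaces as left exact accessible functors $\Spc\to\Spc$ via Proposition~\ref{prop:lex}, $\Shape\left(\cG\right)$ is the functor $V\mapsto\Gamma_{\cG}\Delta_{\cG}\left(V\right)\simeq\Map_{\cG}\left(1_{\cG},\Delta_{\cG}\left(V\right)\right)$, and by Proposition~\ref{prop:ladj} (applied to the left exact inclusion $\Spc_{<\i}\hookrightarrow\Spc$, as in the discussion following it) the functor $\left(\blank\right)^{\wedge}_{<\i}$ is simply restriction of such functors along $\Spc_{<\i}\hookrightarrow\Spc$. So it suffices to produce an equivalence $\Gamma_{\cE}\Delta_{\cE}\left(V\right)\simeq\Gamma_{\cF}\Delta_{\cF}\left(V\right)$, natural in $V\in\Spc_{<\i}$ and compatible with $\Shape\left(\phi\right)$.

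Fix an $n$-truncated space $V$. Since $\Delta_{\cE}$ and $\Delta_{\cF}$ are left exact they preserve $n$-truncated objects, so $\Delta_{\cE}\left(V\right)$, $\Delta_{\cF}\left(V\right)$ and the terminal objects $1_{\cE}$, $1_{\cF}$ all lie in $\tau_{\le n}$. As $\tau_{\le n}\cG\hookrightarrow\cG$ is fully faithful for any $\i$-topos $\cG$, we may rewrite
$$\Gamma_{\cE}\Delta_{\cE}\left(V\right)\simeq\Map_{\tau_{\le n}\cE}\left(1_{\cE},\Delta_{\cE}\left(V\right)\right),\qquad\Gamma_{\cF}\Delta_{\cF}\left(V\right)\simeq\Map_{\tau_{\le n}\cF}\left(1_{\cF},\Delta_{\cF}\left(V\right)\right).$$
By Lemma~\ref{lem:smalln}, $\phi^*$ restricts to an equivalence $\tau_{\le n}\cF\xrightarrow{\ \sim\ }\tau_{\le n}\cE$; being left exact it carries $1_{\cF}$ to $1_{\cE}$ and $\Delta_{\cF}\left(V\right)$ to $\phi^*\Delta_{\cF}\left(V\right)\simeq\Delta_{\cE}\left(V\right)$ (the identification $\Delta_{\cE}\simeq\phi^*\Delta_{\cF}$ holds because $\Spc$ is terminal in $\Topi$, so inverse images compose). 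Hence the two displayed mapping spaces are canonically identified, the identification is plainly natural in $V\in\Spc_{<\i}$ and compatible with the map induced by $\phi$ on shapes, and restricting along $\Spc_{<\i}\hookrightarrow\Spc$ gives the sought equivalence of pro-truncated shapes.

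The genuine content here has been absorbed into Lemma~\ref{lem:smalln} (hence Lemmas~\ref{lem:nhyper} and~\ref{lem:bign}): once $\phi$ is known to be an equivalence on $n$-truncated objects for every finite $n$, the passage to pro-truncations is formal. The point to keep in mind is that $\phi$ is genuinely \emph{not} an equivalence of $\i$-topoi, so one should not expect the shapes themselves to agree before truncation --- only their images under $\left(\blank\right)^{\wedge}_{<\i}$, which by Proposition~\ref{prop:ladj} depend solely on the values of the shape functor on truncated test spaces. The only other thing to check with care is the reduction to the affine-diagonal case at the outset, which proceeds along the lines of Theorem~\ref{thm:prothesame}.
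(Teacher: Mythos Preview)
Your argument is correct and follows the same route as the paper: invoke Lemma~\ref{lem:smalln} for the geometric morphism $\cE\to\cF$, then identify $\cE\simeq\Sh\left(\cX_{\et}\right)$ via Corollary~\ref{cor:colimtop} (the paper leaves the formal passage from ``equivalence on $n$-truncated objects for all $n$'' to ``equivalence of pro-truncated shapes'' implicit, whereas you spell it out). One simplification: the hypercover reduction to the affine-diagonal case is unnecessary and potentially delicate to carry out for pro-truncation rather than profinite completion; the paper simply cites \cite[Proposition 3.19 (d)]{TV}, which shows that $\radice{\i}{X}$ always has affine diagonal for any fine saturated log scheme $X$, so Lemma~\ref{lem:smalln} applies directly.
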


\begin{proof}
By \cite[Proposition 3.19 (d)]{TV}, $\radice{\i}{X}$ has affine diagonal. So its pro-truncated shape agrees with that of the $\i$-topos $\radice{\i}{X}^{\mathfrak{DM}_{rep}}_{\et},$ by Lemma \ref{lem:smalln}. But by Corollary \ref{cor:colimtop},
$$\radice{\i}{X}^{\mathfrak{DM}_{rep}}_{\et} \simeq \underset{T \to \radice{\i}{X}} \colim \Sh\left(T_{\et}\right),$$ and the shape of the right hand-side is by definition the \'etale homotopy type of $\radice{\i}{X}.$
\end{proof}

\begin{corollary}
The pro-truncated shape of the Kummer \'etale topos agrees with the pro-truncation of the \'etale homotopy type of the infinite root stack.
\end{corollary}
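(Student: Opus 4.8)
The plan is to obtain the corollary purely formally, by concatenating two results already established above. The only geometric input is Theorem \ref{thm:TVmain}, which supplies a \emph{canonical equivalence} of $\i$-topoi
\[
\Sh\left(X_{k\et}\right) \simeq \Sh\left(\radice{\i}{X}^{rep}_{\et}\right),
\]
valid on the nose, before any completion. I would begin by recording that, since $\Shape\colon \Topi \to \Pro\left(\Spc\right)$ is a functor and hence carries equivalences to equivalences, this identification descends to an equivalence $\Shape\left(\Sh\left(X_{k\et}\right)\right) \simeq \Shape\left(\Sh\left(\radice{\i}{X}^{rep}_{\et}\right)\right)$ of pro-spaces; applying the localization functor $\left(\blank\right)^{\wedge}_{<\i}$ then identifies their pro-truncated shapes.

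Next I would invoke Theorem \ref{protruncated}, which identifies the pro-truncated shape of $\Sh\left(\radice{\i}{X}^{rep}_{\et}\right)$ with the pro-truncation of the \'etale homotopy type $\Pi^{\et}_\i\left(\radice{\i}{X}\right)$ of the infinite root stack, viewed as an honest fpqc stack (recall that all the work there --- the identification of $\cX^{rep}_{\et}$-sheaves with the $\i$-topos $\radice{\i}{X}^{\mathfrak{DM}_{rep}}_{\et}$ via the truncation-wise equivalence of Lemma \ref{lem:smalln}, together with Corollary \ref{cor:colimtop} --- has already been done). Chaining the two identifications yields
\[
\Shape\left(\Sh\left(X_{k\et}\right)\right)^{\wedge}_{<\i} \simeq \Pi^{\et}_\i\left(\radice{\i}{X}\right)^{\wedge}_{<\i},
\]
which is exactly the assertion of the corollary.

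There is no real obstacle here: all the delicate work lies in Theorem \ref{thm:TVmain} and in the chain of lemmas (\ref{lem:limittop}, \ref{lem:nhyper}, \ref{lem:bign}, \ref{lem:smalln}) feeding into Theorem \ref{protruncated}. The one point worth flagging is the reading of the phrase ``the \'etale homotopy type of the infinite root stack'': here it means $\Pi^{\et}_\i\left(\radice{\i}{X}\right)$ for the actual stack $\radice{\i}{X}$, not the pro-object $\radice{\i}{X}_{pro}$, and with that convention the statement is immediate. If one instead wanted the pro-object version, one would further feed in Theorem \ref{thm:prothesame}, but that comparison only holds after \emph{profinite} completion, not merely pro-truncation.
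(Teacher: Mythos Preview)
Your proposal is correct and matches the paper's own proof, which simply says ``This follows immediately from Theorem \ref{thm:TVmain}'' (implicitly combined with the just-proven Theorem \ref{protruncated}). Your additional remark clarifying that ``the infinite root stack'' here means the actual stack $\radice{\i}{X}$ rather than the pro-object is a helpful gloss but not required for the argument.
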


\begin{proof}
This follows immediately from Theorem \ref{thm:TVmain}.
\end{proof}

\section{Shape in the log regular case}\label{sec:log-regular}

In this section we will give yet another description of the profinite homotopy type of a fine saturated log scheme, in the log regular case. We will then use this to relate the \'etale homotopy type of $\mathbb{G}_m$ to that of $B \boldsymbol{\mu}_{\i}$, where recall that $\boldsymbol{\mu}_\infty=\varprojlim_n \boldsymbol{\mu}_n$ is the inverse limit of the group schemes of $n$-th roots of unity. We will work with a log regular locally Noetherian log scheme $X$ over a locally Noetherian base $S.$

We will be using the following recent result:

\begin{theorem}\cite[Theorem 4.34]{Berner} \label{thm:Berner}
Let $X$ be a log regular locally Noetherian log scheme. Then for any locally constant sheaf $A$
of finite abelian groups with orders invertible on $X$, the inclusion $i: X^{\triv} \to X$ induces isomorphisms
in sheaf cohomology groups
$$H^*\left(X_{k\et},A\right) \cong H^*_{\et}\left(X^{\triv},i^*A\right).$$
Moreover, $i$ induces an isomorphism for any geometric point $x \in X^{ \triv}$ on pro-$\ell$-completions for any prime number $\ell$ invertible on $X$

$$\pi_1^{k\et}\left(X,x\right)^{\wedge}_{\ell} \cong \pi_1^{\et}\left(X^{\triv},x\right)^{\wedge}_{\ell}.$$
More generally, the category of Kummer \'etale covers of $X$ is equivalent to the category of \'etale covers of $X^{\triv}$ which extend to tamely ramified covers of $X.$
\end{theorem}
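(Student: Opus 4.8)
The plan is to regard this as the combination of a logarithmic Abhyankar lemma (for the statement about covers and the pro-$\ell$ fundamental group) and a logarithmic absolute purity statement (for the cohomology comparison), each obtained by reducing to a local toric model. The first step is a reduction: all three assertions are \'etale-local on $X$ (\'etale cohomology, the pro-$\ell$ completion of the Kummer \'etale $\pi_1$ of a geometric point, and the category of Kummer \'etale covers all satisfy \'etale descent), so by Kato's structure theorem for log regular log schemes one may assume that $X$ is affine, admits a global Kato chart $X \to \Spec\bZ[P]$ with $P$ a fine saturated sharp monoid, the map is (log) smooth, $X^{\triv}$ is the open locus where the image of the nonunits of $P$ is invertible, and (after one further \'etale localization) the underlying scheme is regular with ``toric'' boundary. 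By a K\"unneth/projection argument this model is in turn built from copies of $\radice{n}{\bA^1}$ --- root stacks of $\bA^1$ with its standard log structure, whose trivial locus is $\Gm$ --- together with a regular factor carrying the trivial log structure.

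For the equivalence of cover categories, recall from \cite{TV} (and Theorem~\ref{thm:TVmain}) that a Kummer \'etale cover of $X$ is the same datum as a finite \'etale representable cover of $\radice{\i}{X}$, hence by a standard limit argument descends to a finite \'etale cover of $\radice{n}{X}$ for some $n$; when $n$ is invertible on $X$ this is a tame Deligne--Mumford stack. Restriction along $i\colon X^{\triv}\hookrightarrow X$ (over which the root construction is trivial) sends such a cover to a finite \'etale cover of $X^{\triv}$. Full faithfulness of this restriction functor is a normality/purity statement: a finite \'etale cover of $\radice{n}{X}$ is normal --- log regularity of $X$ is exactly what guarantees $\radice{n}{X}$ is ``regular'' --- and therefore is determined by its restriction to the schematically dense open $X^{\triv}$. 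Essential surjectivity onto the tamely ramified covers is Abhyankar's lemma in its stacky form: a cover of $X^{\triv}$ extending to a cover of $X$ tamely ramified along the log locus becomes \'etale after base change along $\radice{n}{X}\to X$ once $n$ is divisible by all ramification indices, hence descends to a finite \'etale cover of $\radice{n}{X}$. The statement about $\pi_1^{k\et}(X,x)^{\wedge}_\ell$ is the pro-$\ell$ shadow of this equivalence: since $\ell$ is invertible on $X$, every $\ell$-power cover of $X^{\triv}$ is automatically tamely ramified and its extension to $X$ is again tame, so the $\ell$-completed Kummer \'etale $\pi_1$ of $X$ and the $\ell$-completed \'etale $\pi_1$ of $X^{\triv}$ classify the same covers.

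For the cohomology comparison, use Theorem~\ref{thm:TVmain} to rewrite $H^*(X_{k\et},A)$ as the \'etale cohomology of $\radice{\i}{X}$ with coefficients in the pullback of $A$, which --- because $A$ has order invertible on $X$ and hence gives a $\pi$-finite coefficient sheaf with invertible torsion --- is computed as $\underset{n}\colim H^*_{\et}(\radice{n}{X},A_n)$ along the root tower. Restriction along $X^{\triv}\hookrightarrow\radice{n}{X}$ furnishes a natural comparison map to $H^*_{\et}(X^{\triv},i^*A)$, and one proves it is an isomorphism by d\'evissage: first filter to reduce to the constant sheaf $A=\bZ/m$ with $m$ invertible on $X$ and dividing $n$; then pass to the local toric model above; then by K\"unneth reduce to the single-variable case, where one computes directly that $\underset{n}\colim H^*_{\et}(\radice{n}{\bA^1},\bZ/m)\cong H^*_{\et}(\Gm,\bZ/m)$ --- both sides are $\bZ/m$ in degrees $0$ and $1$ and vanish above, the degree-one class being the Kummer class of the coordinate, which matches under the transition maps of the tower (so that, although the cohomology of a single finite root stack is \emph{not} that of $\Gm$, the colimit is). Reassembling via \'etale descent and the colimit gives the general statement; alternatively one may phrase the same computation as $\epsilon_*(\bZ/m)=\bZ/m$ together with the vanishing of $R^{q}\epsilon_*(\bZ/m)$ for $q>0$, where $\epsilon\colon \Sh(X^{\triv}_{\et})\to\Sh(X_{k\et})$ is the canonical morphism of topoi, and conclude with the Leray spectral sequence.

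The main obstacle is the local analysis in the genuinely log regular case. Unlike the strict normal crossings situation there is no global boundary divisor, only Kato's \'etale-local toric chart; one must make the d\'evissage, the normality of $\radice{n}{X}$, and the Abhyankar argument work uniformly in this generality and with locally constant (not merely constant) coefficients, and --- crucially --- in mixed characteristic, where the hypothesis that the orders of $A$ (resp.\ that $\ell$) are invertible on $X$ is precisely the tameness condition without which neither Abhyankar's lemma nor the purity vanishing holds; handling the ``regular direction'' of the local model in mixed characteristic is where absolute purity (Gabber) enters. A secondary technical point is controlling the compatibility between the stack $\radice{\i}{X}$ and its presentation as a cofiltered limit of finite root stacks, so that the cohomology comparison passes correctly to the colimit --- this needs the constructibility of the $A_n$ and continuity of \'etale cohomology, which hold here but must be tracked.
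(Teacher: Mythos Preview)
The paper does not prove this theorem at all: it is stated with the citation \cite[Theorem 4.34]{Berner} and used as a black box. So there is no ``paper's own proof'' to compare against; your proposal is an attempted reconstruction of Berner's argument, not of anything in this paper.

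That said, your sketch has a genuine gap. You write that, after \'etale localization to a Kato chart $X\to\Spec\bZ[P]$, ``this model is in turn built from copies of $\radice{n}{\bA^1}$ \ldots\ together with a regular factor,'' and you then run a K\"unneth reduction to the single-variable case $\Gm\hookrightarrow\bA^1$. This only works when $P$ is free, i.e.\ in the strict normal crossings case. For a general fine saturated sharp monoid $P$, $\Spec\bZ[P]$ is a (possibly singular) affine toric variety and is \emph{not} a product of copies of $\bA^1$; correspondingly $\radice{n}{X}$ is not a product of one-variable root stacks, and the K\"unneth step fails. You acknowledge this tension in your final paragraph (``unlike the strict normal crossings situation\ldots''), but your actual argument for the cohomology isomorphism relies on exactly the reduction that is unavailable. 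The log regular hypothesis gives you Kato's local toric model, but the local computation must be carried out for $\Spec\bZ[P]$ with an arbitrary $P$, not reduced to $P=\bN$; this is where the real content of a logarithmic purity theorem lies.

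A smaller point: the claim that ``the pro-$\ell$ completion of the Kummer \'etale $\pi_1$ of a geometric point'' is \'etale-local on $X$ is not literally correct---fundamental groups do not satisfy \'etale descent. What one can localize is the underlying comparison of \emph{covers} (or of the relevant gerbes/torsors), and then deduce the $\pi_1$ statement globally from the categorical equivalence; your later paragraph does this correctly, but the opening reduction is phrased too loosely.
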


\begin{lemma}
If $S=\Spec k$ for $k$ a field of characteristic zero, then any \'etale cover of $X^{\triv}$ extends to a tamely ramified cover of $X.$
\end{lemma}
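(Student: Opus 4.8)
The plan is as follows. In characteristic zero there is no wild ramification, so the natural strategy is: given a finite \'etale cover $\pi\colon V \to U$ of the classical locus $U := X^{\triv}$, let $Y$ be the normalization of $X$ in $V$, and show that $Y \to X$ is a tamely ramified cover extending $\pi$. First recall that, since $X$ is log regular, Kato's structure theorem gives that $X$ is normal; moreover $X^{\triv}$ contains every generic point of $X$ (at a generic point the stalk $\overline{M}_X^{\gp}$ has rank zero and, being the groupification of a sharp fs monoid, is torsion-free, hence vanishes), so $j\colon U \hookrightarrow X$ is a dense open immersion and $\cO_X \hookrightarrow j_*\cO_U$. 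Write $D := X \setminus U$ for the complementary reduced closed subscheme (by Kato's theorem $D$ is a divisor, though we shall not strictly need this).

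First I would establish that $Y \to X$ is finite. Since $V$ is finite \'etale over the normal scheme $U$ it is normal, and $Y$ is the relative spectrum over $X$ of the integral closure $\cB$ of $\cO_X$ in $f_*\cO_V$, where $f = j \circ \pi$; finiteness of $\cB$ over $\cO_X$ holds because $X$ is excellent, hence Nagata (this is among our standing assumptions on $X$; in the case of primary interest $X$ is of finite type over $k$). Then $Y$ is normal by construction, $\cO_X \hookrightarrow \cB$ is integral and injective so $Y \to X$ is surjective, and $Y \times_X U \simeq V$ since a finite \'etale cover is its own normalization; thus $Y \to X$ restricts over $U$ to the given cover. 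It remains to check that $Y \to X$ is tamely ramified along $D$. For a codimension-one point $x \in D$ and a point $y \in Y$ above it, $\cO_{X,x} \to \cO_{Y,y}$ is an extension of discrete valuation rings whose ramification index is automatically invertible in the residue field $\kappa(x)$, and whose residue field extension $\kappa(x) \subset \kappa(y)$ is automatically separable, because $\kappa(x)$ is a $k$-algebra and $k$ has characteristic zero, so $\kappa(x)$ has characteristic zero. Hence there is no ramification obstruction in codimension one; and over the (possibly empty) locus of $D$ of codimension $\geq 2$ the map is \'etale by Zariski--Nagata purity. So $Y \to X$ is a tamely ramified cover extending $V \to U$.

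The place where care is genuinely needed — and what I would regard as the main point — is the interface with the precise notion of ``tamely ramified cover of $X$'' used in \cite{Berner}: if it is taken in the purely divisorial Grothendieck--Murre sense, the argument above is complete; if instead one requires that $Y$, equipped with its compactifying log structure $\cO_Y \cap j'_*\cO_V^{\times}$, be itself log regular and $Y \to X$ be Kummer log \'etale, then one additionally invokes Abhyankar's lemma in the log regular setting (due to Kato, and used in Berner's thesis), which guarantees that the normalization of a log regular scheme in a \emph{tame} cover of its trivial locus has exactly this form — and the tameness hypothesis of that lemma is vacuous here precisely because we work in characteristic zero. The hard part is therefore not any new geometric input but pinning down this definitional match; once it is in place, combining the lemma with Theorem \ref{thm:Berner} yields at once that, over a characteristic-zero field, the category of Kummer \'etale covers of $X$ is equivalent to the category of \emph{all} finite \'etale covers of $X^{\triv}$, and in particular $\pi_1^{k\et}(X,x) \cong \pi_1^{\et}(X^{\triv},x)$ with no profinite completion needed.
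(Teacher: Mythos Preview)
Your argument is correct in spirit and reaches the right conclusion, but it takes a different route from the paper and has one wrinkle worth noting.

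The paper does not construct the extension at all. It simply observes that for a log regular $X$ the local rings $\cO_{X,D_i}$ at the generic points of the components of $D=X\setminus X^{\triv}$ are DVRs, so one can speak of the ramification of the given \'etale cover $Y\to X^{\triv}$ along $D$; in characteristic zero this ramification is automatically tame by definition, and then \cite[Proposition~B.7]{Hoshi} is invoked to produce a Kummer \'etale (hence tamely ramified) extension $Y'\to X$. Your approach instead builds the extension by hand as the normalization of $X$ in $V$, checks finiteness (via Nagata/excellent) and tameness at codimension-one points, and only in your second paragraph converges to the same external input (Kato's log-regular Abhyankar lemma, which is what Hoshi's Proposition~B.7 packages). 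Your route is more explicit and self-contained until that last citation; the paper's is shorter because it offloads the construction entirely.

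The wrinkle: your sentence invoking Zariski--Nagata purity for the locus of $D$ of codimension $\geq 2$ is not right as stated. Purity of the branch locus requires the base to be \emph{regular}, whereas a log regular scheme is only normal in general (think of toric singularities), so the theorem does not apply. Fortunately this step is unnecessary: since $X$ is log regular, $D$ is pure of codimension one (as you yourself note via Kato's structure theorem), so the locus in question is empty; and in any case the Grothendieck--Murre notion of tame cover only imposes conditions at the generic points of $D$. So the conclusion stands, but you should drop the purity clause.
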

\begin{proof}
 Let $D=X \setminus X^{\triv}$. Let $Y \to X^{\triv}$ be an  \'etale cover. If $X$ is log regular, the rings $\cO_{X, D_i}$ are discrete valuation rings for any irreducible component $D_i$ of $D$, and therefore we can make sense of the ramification of $Y \to X^{\triv}$ over $D$ (see \cite[Section 53.31]{stacks-project}). Further, in characteristic zero, all \'etale maps
$Y \to   X^{\triv} $ are tamely ramified over $D$: this follows directly from the definition of tame ramification. Hence by  \cite[Proposition B.7]{Hoshi}, $Y \to X^{\triv}$ extends to a Kummer \'etale map $Y' \to X$, which is in particular tamely ramified. 
\end{proof}

\begin{lemma}\label{lem:Gtorsors}
Let $\ell$ be a prime invertible on $X,$ and let $G$ be a finite group whose order is relatively prime to $\ell.$ Then  there is an equivalence of categories between $G$-torsors on $X^{\triv}$ and G-torsors in the Kummer \'etale topos. 
\end{lemma}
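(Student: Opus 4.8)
The plan is to deduce this from Theorem \ref{thm:Berner} together with the translation between torsors, classifying stacks, and mapping spaces into Betti stacks. First I would recall that for a finite group $G$, the groupoid of $G$-torsors on a site is the groupoid of maps into the classifying stack $BG$, and—crucially—that $BG$ is $1$-truncated, so all the comparison machinery developed for $n$-truncated objects applies. Concretely, the groupoid of $G$-torsors in the Kummer \'etale topos $\Sh\left(X_{k\et}\right)$ is $\Map_{\Sh\left(X_{k\et}\right)}\left(1,BG\right)$, and similarly $G$-torsors on $X^{\triv}$ form $\Map_{\Sh\left(X^{\triv}_{\et}\right)}\left(1,BG\right)$; both of these are $1$-truncated spaces, i.e.\ ordinary groupoids. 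The statement to prove is that the restriction functor $i^*$ induces an equivalence between these two groupoids.

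Next I would reduce the equivalence of groupoids to a statement about $\pi_0$ and $\pi_1$. On $\pi_0$, the set of isomorphism classes of $G$-torsors is $H^1$ of the relevant site with coefficients in $G$ (for $G$ possibly nonabelian, non-abelian $H^1$; but this is still controlled by Theorem \ref{thm:Berner}'s final assertion that the category of Kummer \'etale covers of $X$ is equivalent to the category of \'etale covers of $X^{\triv}$ extending to tamely ramified covers, together with the hypothesis that $|G|$ is prime to $\ell$, hence—using that $\ell$ is invertible on $X$—all such covers are automatically tame). On $\pi_1$ based at a torsor $P$, the automorphism group is the group of global sections of the inner-twisted form $\underline{G}_P$, which for the trivial torsor is just $H^0$ with coefficients in $G$, i.e.\ locally constant $G$-valued functions; and the cohomology comparison $H^*\left(X_{k\et},A\right)\cong H^*_{\et}\left(X^{\triv},i^*A\right)$ of Theorem \ref{thm:Berner}, applied in low degrees to the (abelian, finite, order invertible on $X$) coefficient sheaves arising from $G$ and its center, handles the $\pi_1$ part. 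Alternatively, and more cleanly, I would phrase the whole thing via the fundamental group: $G$-torsors on a connected locally Noetherian scheme or log scheme with a chosen geometric point correspond to the groupoid $\left[\mathrm{Hom}\left(\pi_1,G\right)/G\right]$ where $\pi_1$ acts by conjugation on $\mathrm{Hom}$, and Theorem \ref{thm:Berner} gives $\pi_1^{k\et}\left(X,x\right)^{\wedge}_\ell\cong \pi_1^{\et}\left(X^{\triv},x\right)^{\wedge}_\ell$; since $G$ has order prime to $\ell$, every homomorphism from $\pi_1$ to $G$ factors through the $\ell$-completion (as the image is a finite $\ell'$-group, which is a quotient of the profinite completion that only sees the away-from-$\ell$ part—here I would need to be slightly careful and instead invoke that a finite group of order prime to $\ell$ is a quotient of the full profinite completion, and the full profinite $\pi_1$'s agree because the cohomology comparison holds for all finite coefficients of order invertible on $X$, combined with the cover-category equivalence). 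This lets one identify $\mathrm{Hom}\left(\pi_1^{k\et},G\right)\cong\mathrm{Hom}\left(\pi_1^{\et}\left(X^{\triv}\right),G\right)$ compatibly with the conjugation actions, yielding the equivalence of groupoids.

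For the general, not-necessarily-connected case, I would reduce to the connected case by working componentwise: both $X$ and $X^{\triv}$ decompose into connected components, the classical locus of a connected log regular log scheme need not be connected but its components are handled by the relative fundamental groupoid, and torsors glue along the clopen decomposition (cf.\ Corollary \ref{cor:coprodsh2} for the behaviour of coproducts), so it suffices to treat each connected component with a chosen basepoint.

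The main obstacle I anticipate is the passage from the abelian cohomological comparison and the pro-$\ell$ isomorphism of fundamental groups in Theorem \ref{thm:Berner} to a statement about \emph{all} finite $G$-torsors with $|G|$ prime to $\ell$, not merely $\ell$-group torsors: Theorem \ref{thm:Berner} as stated gives pro-$\ell$ completions of $\pi_1$, whereas a $G$-torsor with $|G|$ coprime to $\ell$ is invisible to the pro-$\ell$ completion. The resolution must come from the \emph{last} sentence of Theorem \ref{thm:Berner}—that the entire category of Kummer \'etale covers of $X$ is equivalent to the category of \'etale covers of $X^{\triv}$ that extend to tamely ramified covers of $X$—together with the observation (analogous to the Lemma just proved in characteristic zero, and valid here because $\ell$ is invertible on $X$ so covers of degree prime to $\ell$ are automatically tame along the log locus) that a connected $G$-cover of degree prime to $\ell$ always extends tamely. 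So the real content is checking that ``order prime to $\ell$'' forces tameness along $D=X\setminus X^{\triv}$, using log regularity to get discrete valuation rings at the divisorial points and the definition of tame ramification; this is where I would spend the care, and it is presumably exactly what the authors do next.
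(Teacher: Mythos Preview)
Your proposal arrives at essentially the paper's argument, but only after a detour that you yourself correctly flag as problematic. The paper does \emph{not} pass through $\pi_1$, pro-$\ell$ completions, or mapping spaces into $BG$; it works directly with covers. Concretely, the paper (i) shows every \'etale $G$-torsor $Y\to X^{\triv}$ is tamely ramified along $D=X\setminus X^{\triv}$, (ii) invokes the equivalence of \cite[Proposition B.7]{Hoshi} between tamely ramified \'etale covers of $X^{\triv}$ and Kummer \'etale covers of $X$ (this is the content of the last sentence of Theorem \ref{thm:Berner}), and (iii) observes that $G$-torsors are characterized purely categorically in terms of the group objects $X^{\triv}\times G$ and $X\times G$ and fibered products, so the equivalence of cover categories restricts to an equivalence of $G$-torsors. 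Your final paragraph lands on exactly (i)--(ii), and your anticipation that ``this is where I would spend the care'' is accurate.

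What you are missing relative to the paper is the mechanism for (i): the paper argues that after base change to $\Spec K_{X,D_i}$ the torsor splits as a disjoint union of field extensions on which $G$ acts transitively, so each component is Galois for a quotient $G'$ of $G$, whence by \cite[Tag 09E3]{stacks-project} the ramification indices divide $|G'|$ and hence $|G|$. This Galois-theoretic step is the actual content and you do not supply it. Your initial strategy via $[\Hom(\pi_1,G)/G]$ and pro-$\ell$ completion cannot work for exactly the reason you diagnose: a $G$ of order coprime to $\ell$ is invisible to pro-$\ell$ completion, so no amount of manipulating Theorem \ref{thm:Berner}'s $\pi_1$ statement will yield the result. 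You should drop that thread entirely and go straight to the cover-category argument. (Your residual unease about why ``coprime to $\ell$'' is the right hypothesis for tameness is shared by the paper's own proof, which concludes the ramification indices are ``coprime to $\ell$'' and calls this tame; the argument really wants $|G|$ invertible on $X$, and the statement as written appears to contain a typo in this direction.)
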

\begin{proof}
We use the notations from the previous proof: namely we set $D:=X \setminus X^{\triv}$ and we let 
$D_i$ be the irreducible components of $D$.  
We observe first that if $$Y \to X^{\triv}$$ is a $G$-torsor, then it is tamely ramified over $D$. Recall from \cite[0BSE]{stacks-project} that, in order to check this, we  consider the fiber product 
$$
\xymatrix{
\Spec L_{D_i} \ar[r] \ar[d] & Y \ar[d] \\
\Spec K_{X,D_i} \ar[r] & X^{\triv}
}
$$
where: 
\begin{itemize}
\item $K_{X,D_i}$ is the function field of the component containing $D_i$,
\item the map $\Spec K_{X,D_i}  \to X^{\triv}$ is given by the projection 
$$
\Spec K_{X,D_i}  \cong 
\Spec \mathcal{O}_{X, D_i}  \times_X X^{\triv} \to X^{\triv}.
$$
\end{itemize}
Now, $L_{D_i}$ is a finite $K_{X,D_i}$-algebra: it splits as a product of finite field extensions 
$$
\Spec L_{D_i}  \cong \coprod_{j \in J} \Spec L_{D_i, j} ,
$$
where $J$ is a finite set. 
Further $\Spec L_{D_i}  \to \Spec K_{X,D_i} $ is a $G$-torsor, since it is the base change of a $G$-torsor. Thus $G$ acts transitively on the components  $\Spec L_{D_i, j}$ and this implies that all components are isomorphic, and each 
map $\Spec L_{D_i, j}  \to \Spec K_{X,D_i} $ is Galois for a quotient $G'$ of $G$. By   \cite[Tag 09E3]{stacks-project} all ramification indexes 
of $\Spec L_{D_i, j}  \to \Spec K_{X,D_i} $ are equal, and thus in particular they  divide $|G'|$. Since $|G'|$ divides $|G|$, we conclude that all ramification indexes  are  coprime to $\ell$, and  this is exactly the definition of tame ramification over $D$.

By \cite[Proposition B7]{Hoshi} we have an equivalence of categories $\Phi$ between: 
\begin{enumerate}
\item the category $\mathcal{C}_1$ of \'etale coverings of $X^{\triv}$ which are tamely ramified over $D$, and
\item the category $\mathcal{C}_2$ of Kummer \'etale coverings of $X$. 
\end{enumerate} 
Note that $\cC_1$ and $\cC_2$ have finite limits. We will denote by $-\times_{\mathcal{C}_i}-$, \, $i=1,2$,  the Cartesian product in $\cC_1$ and in $\cC_2$: it corresponds  respectively to the fiber product of schemes over $X^{\triv}$, and to the fiber product of fine and saturated log schemes over $X$.

Consider the group objects 
$$\mathcal{G}_1:=X^{\triv} \times G \to X^{\triv} \in \mathcal{C}_1,  \quad \mathcal{G}_2:=X \times G \to X \in \mathcal{C}_2.$$ 
Note that $G$-torsors in $\mathcal{C}_1$ can be described purely categorically: they are objects 
$P \in \mathcal{C}_1$ with an action of $\mathcal{G}_1$ such that the natural map $P \times_{\mathcal{C}_1} \mathcal{G}_1 \to P \times_{\mathcal{C}_1} P$ is an isomorphism. The analogous description holds for Kummer \'etale $G$-torsors in $\mathcal{C}_2$, in terms of $\mathcal{G}_2$.   
Since $\Phi$ is an equivalence, it will yield an equivalence between the category of $G$-torsors in $\mathcal{C}_1$ (i.e. $G$-torsors on $X^{\triv}$ that are tamely ramified over $D$)  and the category of $G$-torsors in $\mathcal{C}_2$. We showed  above  that all $G$-torsors are in fact automatically tamely ramified over $D$, and this concludes the proof. 
\end{proof}

\begin{remark}\label{rmk:FinGp}
In characteristic zero, the above holds for any finite group $G.$
\end{remark}

\begin{corollary}\label{cor:Berner}
Let $\ell$ be a prime invertible on $X,$ then $i$ induces an equivalences
$$\Shape\left(X_{k\et}\right)^{\wedge}_{\ell} \simeq \Pi^{\et}_\i\left(X^{\triv}\right)^{\wedge}_{\ell}.$$ Moreover, in characteristic zero, there is a profinite homotopy equivalence
$$\widehat{\Shape}\left(X_{k\et}\right) \simeq \widehat{\Pi}^{\et}_\i\left(X^{\triv}\right).$$
\end{corollary}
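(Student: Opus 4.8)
The plan is to deduce Corollary~\ref{cor:Berner} from Proposition~\ref{prop:ellsame} together with the comparison results already established. Recall that by Proposition~\ref{prop:ellsame}, a map $f:\cX \to \cY$ of pro-spaces induces an equivalence on $\ell$-profinite completions if and only if it induces an equivalence on mapping spaces into every $K(\bZ/\ell, n)$. Both $\Shape\left(X_{k\et}\right)$ and $\Pi^{\et}_\i\left(X^{\triv}\right)$ are shapes of $\i$-topoi, so $\Map$ into $K(\bZ/\ell,n)$ computes, respectively, $H^*(X_{k\et}, \bZ/\ell)$ (in the appropriate truncated sense) and $H^*_{\et}(X^{\triv}, \bZ/\ell)$; more precisely the mapping space $\Map\left(\Shape(\cE), K(\bZ/\ell,n)\right) \simeq \Gamma_{\cE}\left(K(\bZ/\ell, n)\right)$, whose homotopy groups are the sheaf cohomology groups $H^{n-i}(\cE, \bZ/\ell)$. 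So the heart of the matter is to show that $i\colon X^{\triv} \to X$ induces, on each of these spaces of sections, a weak equivalence.

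First I would make precise the map itself: the inclusion of sites (or rather the morphism of topoi) $X^{\triv}_{\et} \to X_{k\et}$ induced by $i$ gives a map of shapes, and I need to see it is an $\ell$-equivalence. By Proposition~\ref{prop:ellsame} this reduces to checking that for all $n$, the induced map
\[
\Gamma_{X_{k\et}}\left(K(\bZ/\ell, n)\right) \longrightarrow \Gamma_{X^{\triv}_{\et}}\left(K(\bZ/\ell, n)\right)
\]
is an equivalence of spaces. Unwinding, the homotopy groups of these are $H^{n-*}\left(X_{k\et}, \bZ/\ell\right)$ and $H^{n-*}_{\et}\left(X^{\triv}, \bZ/\ell\right)$ respectively, together with the $\pi_0$ (connected components = global sections of the constant sheaf $\bZ/\ell$) and $\pi_1$ (which records torsors, i.e.\ $H^1$). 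The isomorphism on all the higher cohomology groups is exactly Theorem~\ref{thm:Berner} applied to $A = \bZ/\ell$ (which is locally constant with order invertible on $X$ since $\ell$ is invertible on $X$). For the fundamental group, Theorem~\ref{thm:Berner} also gives the isomorphism on pro-$\ell$ completions $\pi_1^{k\et}(X,x)^{\wedge}_\ell \cong \pi_1^{\et}(X^{\triv},x)^{\wedge}_\ell$, but since we are completing $\ell$-profinitely this is precisely what is needed; alternatively Lemma~\ref{lem:Gtorsors} handles $\bZ/\ell$-torsors directly. Assembling these statements for all $n$ shows the map of pro-spaces becomes an equivalence after $\ell$-profinite completion, which is the first assertion.

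For the characteristic zero statement, the argument is the same but one now wants an equivalence after (full) profinite completion. By the analogue of Proposition~\ref{prop:ellsame} for profinite completion — i.e.\ checking $\Map$ into all $K(A,n)$ for $A$ finite abelian, or equivalently testing cohomology with all finite abelian coefficients and all finite torsors — one reduces again to Theorem~\ref{thm:Berner} and Lemma~\ref{lem:Gtorsors}, together with Remark~\ref{rmk:FinGp} and the preceding Lemma which ensures that in characteristic zero \emph{every} \'etale cover of $X^{\triv}$ is tamely ramified and hence extends to a Kummer \'etale cover of $X$. Concretely: in characteristic zero, for any finite abelian group $A$, the cohomology comparison $H^*(X_{k\et}, A) \cong H^*_{\et}(X^{\triv}, i^*A)$ still holds (Theorem~\ref{thm:Berner} has no restriction beyond orders invertible on $X$, which is automatic over a $\bQ$-scheme), and Remark~\ref{rmk:FinGp} upgrades the torsor comparison to arbitrary finite $G$. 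So the map of profinite shapes is an equivalence on $\Map$ into every $K(A,n)$, hence a profinite homotopy equivalence.

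The main obstacle is conceptual bookkeeping rather than new input: one must carefully identify the shape of $\Sh(X_{k\et})$ and its $\ell$-profinite (resp.\ profinite) completion with the object testing exactly the cohomology groups and torsor categories that Theorem~\ref{thm:Berner} controls, and check that the comparison map of Theorem~\ref{thm:Berner} is induced by the geometric morphism $i$ functorially in $n$ (so that it really assembles to a map of pro-spaces, not just isomorphisms group-by-group). In particular one should be slightly careful that the $\pi_0$ and $\pi_1$ of the relevant section spaces match up — this is where Lemma~\ref{lem:Gtorsors} (for the $\ell$-local case) and Remark~\ref{rmk:FinGp} (for characteristic zero) are used rather than the bare cohomology isomorphism. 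Once the translation is set up, the corollary is essentially immediate from Proposition~\ref{prop:ellsame}, Theorem~\ref{thm:Berner}, and Lemma~\ref{lem:Gtorsors}; note also that by Corollary~\ref{cor:Berner}'s placement after Theorem~\ref{theoremA}, one may freely replace $\Shape(X_{k\et})$ by any of the equivalent models (1)--(4), which is how this feeds into Theorem~\ref{theoremB}.
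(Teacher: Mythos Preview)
Your approach is essentially the paper's: the paper's proof is a one-liner citing Theorem~\ref{thm:Berner}, Lemma~\ref{lem:Gtorsors}, and Remark~\ref{rmk:FinGp}, and saying the argument is analogous to \cite[Proposition 4.11]{etalehomotopy}; you have unpacked precisely that, with the same inputs.

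One point deserves tightening. For the $\ell$-profinite statement your use of Proposition~\ref{prop:ellsame} is clean and in fact suffices on its own: mapping into $K(\bZ/\ell,n)$ is controlled entirely by cohomology with constant $\bZ/\ell$-coefficients, so Theorem~\ref{thm:Berner} already finishes that case and the torsor lemma is not needed there. For the profinite statement in characteristic zero, however, there is no literal ``analogue of Proposition~\ref{prop:ellsame}'' that reduces to testing only against $K(A,n)$ for finite abelian $A$: a map of pro-spaces can induce equivalences on all such mapping spaces without being a profinite equivalence, since $\pi$-finite spaces need not be simple. The argument the paper has in mind (via \cite[Proposition 4.11]{etalehomotopy}, and mirrored in the proof of Proposition~\ref{prop:connV} here) is a Postnikov induction on an arbitrary connected $\pi$-finite $V$, using the Cartesian square with $B\Aut(K(\pi_n V,n))$ and $B\Aut(\pi_n V)$. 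That reduces the comparison to (a) cohomology with \emph{locally constant} finite abelian coefficients, which is exactly Theorem~\ref{thm:Berner}, and (b) $G$-torsors for arbitrary finite $G$, which is Lemma~\ref{lem:Gtorsors} together with Remark~\ref{rmk:FinGp}. Your ``or equivalently \ldots\ and all finite torsors'' clause is pointing at this, and you invoke the right lemmas, so the issue is one of phrasing rather than substance.
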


\begin{proof}
Using Theorem \ref{thm:Berner} and Lemma \ref{lem:Gtorsors}, the proof is completely analogous to \cite[Proposition 4.11]{etalehomotopy}. The statement in characteristic zero then holds by the same argument, using Remark \ref{rmk:FinGp}.
\end{proof}

\begin{remark}
In fact, the above Corollary holds for a less drastic localization than $\ell$-completion. If $\sC$ is the smallest subcategory of $\Spc$ closed under finite limits and retracts containing the subcategories $a)-c)$ of \cite[Theorem 3.25]{reletale}, then the above holds up to $\left(\blank\right)^{\wedge}_{\sC}$-localization, by the same argument. Furthermore, the proof of the next theorem carries over for $\left(\blank\right)^{\wedge}_{\sC}$-localization as well. We expect it actually holds for completion with respect to all spaces $V$ whose Betti stack $\Delta^{\et}\left(V\right)$ in $S$-schemes is $\mathbb{A}^1$-invariant.
\end{remark}

\begin{theorem}
Let $S$ be a locally Noetherian scheme, and denote by $\boldsymbol{\mu}_\i$ the affine group scheme over $S$ $$\underset{n} \lim \boldsymbol{\mu}_n.$$ Let $\ell$ be a prime invertible on $S,$ and $N$ any non-negative integer. Then there is an equivalence of $\ell$-profinite spaces
$$\Pi^{\et}_\i\left(\mathbb{G}_m^N\right)^{\wedge}_{\ell} \simeq  \Pi^{\et}_\i\left(B\boldsymbol{\mu}_{\i}^N\right)^{\wedge}_{\ell}.$$ Moreover, in characteristic zero, this holds up to profinite completion.
\end{theorem}

\begin{remark}
\cite[Proposition 6.3]{hoyois} establishes this result in the special case that $$S=\Spec k,$$ for $k$ a separably closed field.
\end{remark}

\begin{proof}
Consider $X=\mathbb{A}^N_S$ with its canonical log structure coming from the identification $X=S \times \Spec \mathbb{Z}[P]$, with $P=\mathbb{N}^N.$ Then $X$ is $\mathbb{A}^1$-contractible. So, for $Z$ any $S$-scheme, denoting by $L_{\mathbb{A}^1}$ the left adjoint to the inclusion of $\mathbb{A}^1$-invariant \'etale sheaves of spaces into the $\i$-category of all \'etale sheaves of spaces, we have that $$L_{\mathbb{A}^1}\left(X \times Z\right) \simeq L_{\mathbb{A}^1}\left(Z\right).$$ 
Note that the Betti stack $\Delta_{K\left(\mathbb{Z}/\ell,n\right)}$ is $\mathbb{A}^1$-invariant for any $n$ by \cite[Corollary VI.4.20]{milne}. Hence
\begin{eqnarray*}
\Map\left(X \times Z,\Delta_{K\left(\mathbb{Z}/\ell,n\right)}\right) &\simeq& \Map\left(L_{\mathbb{A}^1}\left(X\times Z\right),\Delta_{K\left(\mathbb{Z}/\ell,n\right)}\right)\\
&\simeq&\Map\left(L_{\mathbb{A}^1}\left(Z\right),\Delta_{K\left(\mathbb{Z}/\ell,n\right)}\right)\\
&\simeq&\Map\left(Z,\Delta_{K\left(\mathbb{Z}/\ell,n\right)}\right).
\end{eqnarray*}
It follows from Proposition \ref{prop:ellsame} that for any $S$-scheme $Z,$ the projection map $$X \times Z \to Z$$ induces an equivalence between $\ell$-profinite \'etale homotopy types
$$\Pi^{\et}_\i\left(X \times Z\right)^{\wedge}_{\ell} \simeq \Pi^{\et}_\i\left(Z\right)^{\wedge}_{\ell}.$$
Notice that \cite[Theorem 3.25]{reletale} implies that in characteristic zero, the Betti stack of any $\pi$-finite space is $\mathbb{A}^1$-invariant, so the analogous result for profinite completion holds.

Observe now that there is an equivalence
$$\radice{n}{X} \simeq \left[X/\boldsymbol{\mu}_n^N\right].$$ 
Since $\Pi^{\et}_\i$ preserves colimits, we have
$$\Pi^{\et}_\i\left(\radice{n}{X}\right) \simeq \underset{k \in \Delta^{op}} \colim \Pi^{\et}_\i\left(\left(\boldsymbol{\mu}_n^N\right)^k \times X\right).$$
Let $V$ be a $\mathbb{A}^1$-invariant space, and assume that $V$ is $m$-truncated for some finite $m$ (e.g. an $\ell$-finite space or, in the characteristic zero case, a $\pi$-finite space).
Then, since $X$ is $\mathbb{A}^1$-contractible, we have
\begin{eqnarray*}
 \Pi^{\et}_\i\left(\radice{n}{X}\right)\left(V\right) &\simeq& \Map\left(\Pi^{\et}_\i\left(\radice{n}{X}\right),j\left(V\right)\right)\\
 &\simeq& \Map\left(\underset{k \in \Delta^{op}} \colim \Pi^{\et}_\i\left(\left(\boldsymbol{\mu}_n^N\right)^k \times X\right),j\left(V\right)\right)\\
 &\simeq& \underset{k \in \Delta^{op}} \lim \Pi^{\et}_\i\left(\left(\boldsymbol{\mu}_n^N\right)^k \times X\right)\left(V\right)\\
 &\simeq& \underset{k \in \Delta^{op}} \lim \Pi^{\et}_\i\left(\left(\boldsymbol{\mu}_n^N\right)^k\right)\left(V\right)\\
 &\simeq& \underset{k \in \Delta_{\le m}^{op}} \lim \Pi^{\et}_\i\left(\left(\boldsymbol{\mu}_n^N\right)^k\right)\left(V\right),
\end{eqnarray*}
where the last equivalence follows from \cite[Lemma 2.21]{KNIRS}.
So, by Theorem \ref{thm:prothesame} we have
\begin{eqnarray*}
\Pi^{\et}_\i\left(\radice{\i}{X}\right)\left(V\right) &\simeq& \underset{n} \colim \Pi^{\et}_\i\left(\radice{n}{X}\right)\left(V\right)\\
&\simeq& \underset{n} \colim \underset{k \in \Delta_{\le m}^{op}} \lim \Pi^{\et}_\i\left(\left(\boldsymbol{\mu}_n^N\right)^k\right)\left(V\right)\\
&\simeq& \underset{k \in \Delta_{\le m}^{op}} \lim \underset{n} \colim \Pi^{\et}_\i\left(\left(\boldsymbol{\mu}_n^N\right)^k\right)\left(V\right)\\
&\simeq& \underset{k \in \Delta_{\le m}^{op}} \lim \Pi^{\et}_\i\left(\left(\boldsymbol{\mu}_\i^N\right)^k\right)\left(V\right)\\
&\simeq& \underset{k \in \Delta^{op}} \lim \Pi^{\et}_\i\left(\left(\boldsymbol{\mu}_\i^N\right)^k\right)\left(V\right)\\
&\simeq&  \Pi^{\et}_\i\left(\underset{k \in \Delta^{op}} \colim\left(\boldsymbol{\mu}_\i^N\right)^k\right)\left(V\right)\\
&\simeq& \Pi^{\et}_\i\left(B\boldsymbol{\mu}_\i\right)\left(V\right),
\end{eqnarray*}
where the $4^{th}$ equivalence follows from Corollary \ref{cor:etaleaffine}. So, on one hand we have that
$$\Pi^{\et}_\i\left(\radice{\i}{X}\right)^{\wedge}_\ell \simeq \Pi^{\et}_\i\left(B\boldsymbol{\mu}_\i\right)^{\wedge}_{\ell},$$ and in the characteristic zero case
$$\widehat{\Pi}^{\et}_\i\left(\radice{\i}{X}\right) \simeq \widehat{\Pi}^{\et}_\i\left(B\boldsymbol{\mu}_\i\right).$$
On the other hand, letting $U:=X^{\triv},$ we have
$$U\cong \mathbb{G}_m^N.$$ So by Corollary \ref{cor:Berner}, we then have
\begin{eqnarray*}
\Pi^{\et}_\i\left(\mathbb{G}_m^N\right)^{\wedge}_{\ell} &\simeq& \Pi^{\et}_\i\left(U\right)^{\wedge}_{\ell}\\
&\simeq& \Shape\left(X_{k\et}\right)^{\wedge}_{\ell}\\
&\simeq& \Pi^{\et}_\i\left(\radice{\i}{X}\right)^{\wedge}_{\ell}\\
&\simeq& \Pi^{\et}_\i\left(B\boldsymbol{\mu}_\i^N\right)^{\wedge}_{\ell},
\end{eqnarray*}
and similarly in the characteristic zero case, but up to profinite completion instead of $\ell$-profinite completion.
\end{proof}

\section{Appendix on Profinite Spaces}
In this appendix, we work out some technical results about profinite spaces which enable us to make local-to-global arguments with them  using hypercovers.

We start by recalling the following result of Lurie:

\begin{theorem}\cite[Theorem E.2.4.1]{SAG}
The composite $$\Profs \hookrightarrow \Pro\left(\Spc\right) \stackrel{\Spc/\left(\blank\right)}{\longlongrightarrow} \Topi$$ is fully faithful and right adjoint to the profinite shape functor.
\end{theorem}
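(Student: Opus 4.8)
The plan is to deduce the whole statement from the two adjunctions already in play. The profinite shape functor is by construction $\widehat{\Shape}=\widehat{\left(\blank\right)}\circ\Shape$, where $\Shape\dashv\left(\Spc/\blank\right)$ on $\Pro\left(\Spc\right)$ and the profinite completion $\widehat{\left(\blank\right)}\colon\Pro\left(\Spc\right)\to\Profs$ is left adjoint to the inclusion $\iota\colon\Profs\hookrightarrow\Pro\left(\Spc\right)$. As a composite of left adjoints, $\widehat{\Shape}$ is itself a left adjoint, and its right adjoint is $\left(\Spc/\blank\right)\circ\iota$ — which is exactly the composite in the statement. So the only real content is full faithfulness, and since a right adjoint is fully faithful precisely when the counit of the adjunction is an equivalence, it suffices to show that for every profinite space $\cX$ the counit $\widehat{\Shape}\left(\Spc/\iota\cX\right)\to\cX$ is an equivalence.

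I would check this against $\pi$-finite coefficients. By Proposition \ref{prop:lex}, $\Profs$ embeds fully faithfully into $\Fun\left(\Spc^{\pi},\Spc\right)^{op}$ via $\cX\mapsto\left(W\mapsto\Map_{\Profs}\left(\cX,W\right)\right)$, so a map of profinite spaces is an equivalence as soon as it induces equivalences after applying $\Map_{\Profs}\left(\blank,W\right)$ for all $\pi$-finite $W$. Combining the adjunctions $\widehat{\left(\blank\right)}\dashv\iota$ and $\Shape\dashv\left(\Spc/\blank\right)$ with the identification $\Map_{\Topi}\left(\cE,\Spc/W\right)\simeq\Shape\left(\cE\right)\left(W\right)=\Gamma_{\cE}\left(\Delta_{\cE}\left(W\right)\right)$ — which is simply the description of $\Shape$ recalled above, read through the embedding of $\Pro\left(\Spc\right)$ into $\Fun\left(\Spc,\Spc\right)^{op}$ — the counit, after testing against $W$, becomes the natural map
\[
\Map_{\Profs}\left(\cX,W\right)\longrightarrow\Map_{\Topi}\left(\Spc/\cX,\Spc/W\right)\simeq\Shape\left(\Spc/\cX\right)\left(W\right),
\]
and one must show this is an equivalence for all profinite $\cX$ and all $\pi$-finite $W$.

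Now write $\cX\simeq\underset{\alpha}\lim V_\alpha$ with each $V_\alpha$ a $\pi$-finite space. The source of the map is then $\underset{\alpha}\colim\Map_{\Spc}\left(V_\alpha,W\right)$, while for the target one uses that $\left(\Spc/\blank\right)\colon\Pro\left(\Spc\right)\to\Topi$ preserves cofiltered limits and $\Spc/V_\alpha\simeq\Psh\left(V_\alpha\right)$ to get $\Spc/\cX\simeq\underset{\alpha}\lim\Psh\left(V_\alpha\right)$ in $\Topi$; and for a single term $\Shape\left(\Psh\left(V_\alpha\right)\right)\left(W\right)=\Gamma_{\Psh\left(V_\alpha\right)}\Delta_{\Psh\left(V_\alpha\right)}\left(W\right)\simeq W^{V_\alpha}\simeq\Map_{\Spc}\left(V_\alpha,W\right)$. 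So the claim is precisely that the shape functor, evaluated at $\pi$-finite coefficients $W$, carries the cofiltered limit $\underset{\alpha}\lim\Psh\left(V_\alpha\right)$ of $\infty$-topoi to the filtered colimit of the spaces $\Shape\left(\Psh\left(V_\alpha\right)\right)\left(W\right)$.

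This last interchange is the heart of the argument, and I expect it to be the main obstacle: it is not formal, since $\Shape$ is a left adjoint and thus has no reason to commute with limits of $\infty$-topoi. It holds because each $\Psh\left(V_\alpha\right)\simeq\Spc/V_\alpha$ is a bounded coherent $\infty$-topos — a $\pi$-finite space being precisely a coherent, truncated object of $\Spc$ — and the transition geometric morphisms in the cofiltered system $\{\Psh\left(V_\alpha\right)\}$ are coherent; the key point is then that for $\infty$-topoi of this kind the shape evaluated at a fixed $\pi$-finite space carries cofiltered limits along coherent morphisms to filtered colimits, the topos-theoretic analogue of the fact that \'etale cohomology commutes with cofiltered limits of quasi-compact quasi-separated schemes along affine transition maps. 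Making this precise is what Lurie's theory of coherent $\infty$-topoi (\cite[Appendix A]{SAG}) is designed to supply, and feeding it in completes the proof.
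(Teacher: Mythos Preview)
The paper does not prove this theorem; it merely recalls it as a result of Lurie, citing \cite[Theorem E.2.4.1]{SAG}, and uses it as a black box in the appendix. So there is no proof in the paper to compare your proposal against.

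That said, your outline is correct: the adjunction is formal, full faithfulness reduces to the counit being an equivalence, and the latter reduces to the interchange of $\Shape\left(\blank\right)\left(W\right)$ with the cofiltered limit $\underset{\alpha}\lim\Psh\left(V_\alpha\right)$ for $W$ $\pi$-finite. You are also right that this interchange is the only substantive step and that it is supplied by the theory of bounded coherent $\infty$-topoi in \cite[Appendix A]{SAG} --- this is in fact the engine behind Lurie's own argument in \cite[Appendix E.2]{SAG}, which proceeds by identifying the essential image of $\Profs$ in $\Topi$ with a class of bounded coherent $\infty$-topoi and invoking the compatibility of shape with cofiltered limits along coherent morphisms of such. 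Your sketch therefore tracks the original proof; the real work is of course buried in the coherence machinery you invoke at the end rather than in the reductions you spell out.
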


\begin{definition}
A morphism $f:\cX \to \cY$ of profinite spaces is called \textbf{\'etale} if the induced geometric morphism $$\Spc/\cX \to \Spc/\cY$$ is an \'etale geometric morphism. (See \cite[Section 6.3.5]{htt}.)
\end{definition}

\begin{lemma}\label{lem:2coprodet}
Let $\cX$ and $\cY$ be profinite spaces. Then the coprojection $\cX \to \cX \coprod \cY$ is \'etale and $\left(-1\right)$-truncated.
\end{lemma}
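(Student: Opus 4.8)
The plan is to realise the coprojection $\cX \to \cX \coprod \cY$ as a base change of a trivial map of $\pi$-finite spaces, and then to invoke the stability of étale (and $(-1)$-truncated) geometric morphisms under base change. Write $S^0 = \{0,1\}$ for the two-element set and $\{0\}$ for the one-point space; both lie in $\Spc^\pi$, and the inclusion $\iota\colon \{0\} \hookrightarrow S^0$ is a monomorphism in $\Spc$ (its fibres are empty or contractible). Let $p\colon \cX \coprod \cY \to S^0$ be the map induced by the constant maps $\cX \to \{0\} \hookrightarrow S^0$ and $\cY \to \{1\} \hookrightarrow S^0$.

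The first step is to check that the square with corners $\cX$, $\{0\}$, $\cX\coprod\cY$, $S^0$ (left vertical the coprojection, bottom $p$, right vertical $\iota$) is Cartesian in $\Profs$, that is, $\cX \simeq (\cX\coprod\cY)\times_{S^0}\{0\}$ over $\cX\coprod\cY$. Since $\Spc^\pi$ admits finite limits and finite coproducts, both are computed levelwise in $\Pro(\Spc^\pi)$; writing $\cX = \lim_i X_i$ and $\cY = \lim_j Y_j$ with $X_i, Y_j \in \Spc^\pi$, one gets $(\cX\coprod\cY)\times_{S^0}\{0\} \simeq \lim_{i,j}\big((X_i\sqcup Y_j)\times_{S^0}\{0\}\big) \simeq \lim_{i,j} X_i \simeq \cX$, compatibly with the coprojection. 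The same square is then Cartesian in $\Pro(\Spc)$, since the inclusion $\Profs\hookrightarrow\Pro(\Spc)$ preserves finite limits and finite coproducts (both levelwise, induced from $\Spc^\pi \hookrightarrow \Spc$).

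Next I would apply $\Spc/(\blank)\colon\Pro(\Spc)\to\Topi$. This functor preserves cofiltered limits by construction, and its restriction to $\Spc$ preserves finite limits: for $Z\in\Spc$ the geometric morphism $\Spc/Z\to\Spc$ is étale, classified by the object $Z$ of the terminal topos, so for every $\cG$ there is a natural equivalence $\Hom_{\Topi}(\cG,\Spc/Z)\simeq \Gamma_{\cG}\Delta_{\cG}(Z)$, and $Z\mapsto \Gamma_{\cG}\Delta_{\cG}(Z)$ is left exact (a composite of left exact functors). As finite limits in $\Pro(\Spc)$ are levelwise and limits commute with limits, $\Spc/(\blank)$ preserves finite limits on all of $\Pro(\Spc)$, hence carries the Cartesian square above to a Cartesian square in $\Topi$ whose right vertical edge is $\Spc/\{0\}\to\Spc/S^0$. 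This edge is the étale geometric morphism classified by the object $\iota\colon\{0\}\hookrightarrow S^0$ of $\Spc/S^0$ (by slicing the slice, $(\Spc/S^0)/\iota\simeq\Spc/\{0\}$), which is $(-1)$-truncated precisely because $\iota$ is a monomorphism.

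Finally, étale geometric morphisms are stable under base change (see \cite[Section 6.3.5]{htt}), and so are the $(-1)$-truncated ones: the base change of the étale morphism classified by $U$ is classified by $g^{*}U$, and $g^{*}$ is left exact, so it sends the monomorphism $U\to 1$ to a monomorphism. Applied to the Cartesian square, this shows that the left vertical edge $\Spc/\cX\to\Spc/(\cX\coprod\cY)$ — which is $\Spc/(\blank)$ applied to the coprojection — is étale and $(-1)$-truncated, as claimed. The only point that needs genuine care is the compatibility of $\Spc/(\blank)$ with finite limits together with the levelwise computation of finite limits and coproducts in $\Pro(\Spc^\pi)$; once the coprojection is seen as a pullback of $\{0\}\hookrightarrow S^0$, the rest is formal. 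The alternative of directly producing the classifying $(-1)$-truncated object of $\Spc/(\cX\coprod\cY)$ would require understanding this pro-$\infty$-topos concretely (e.g. knowing that a compatible tower of subterminal objects glues), which the base-change argument sidesteps.
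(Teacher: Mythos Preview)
Your argument is correct and takes a genuinely different route from the paper's. The paper proceeds by explicitly describing $\Spc/(\cX\coprod\cY)$ as the cofiltered limit $\lim_{\alpha,\beta}\Spc/(X_\alpha\coprod Y_\beta)$, then writes down by hand the object $\overline{\cX}=\left(X_\alpha\hookrightarrow X_\alpha\coprod Y_\beta\right)_{\alpha,\beta}$ of this limit topos and directly computes the slice $\left(\Spc/(\cX\coprod\cY)\right)/\overline{\cX}\simeq\Spc/\cX$; the $(-1)$-truncatedness comes from observing that each component $X_\alpha\hookrightarrow X_\alpha\coprod Y_\beta$ is a monomorphism and that $\overline{\cX}\coprod\overline{\cY}\simeq 1$. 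You instead recognise the coprojection as the base change of $\{0\}\hookrightarrow S^0$ along $p\colon\cX\coprod\cY\to S^0$, check that $\Spc/(\blank)$ carries this pullback square to a pullback in $\Topi$ (finite-limit preservation on $\Spc$ via the shape adjunction, plus levelwise finite limits in $\Pro(\Spc)$ and limits commuting with the defining cofiltered limit), and then invoke the stability of \'etale and of $(-1)$-truncated classifying objects under inverse image. Your packaging is cleaner and avoids the hands-on slice computation; on the other hand, the paper's approach makes the classifying object $\overline{\cX}$ explicitly visible, which is exactly what is reused in the subsequent Corollary~\ref{cor:infsp} and Lemma~\ref{lem:sharkfeet}. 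The two arguments are of course producing the same object---your $p^*(\{0\}\hookrightarrow S^0)$ is precisely the paper's $\overline{\cX}$---but you reach it by pullback rather than by writing it down levelwise.
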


\begin{proof}
Let $\cX=\underset{\alpha} \lim j\left(X_\alpha\right)$ and $\cY=\underset{\beta} \lim j\left(X_\beta\right)$ be profinite spaces. Then we have $$\cX \coprod \cY \simeq  \underset{\alpha,\beta} \lim \left(j\left(X_\alpha \coprod Y_\beta\right)\right).$$ As such, we have an identification $$\Spc/\left(\cX \coprod \cY\right) \simeq \underset{\alpha,\beta} \lim \Spc/\left(X_\alpha \coprod Y_\beta\right)$$ in $\Topi.$ However, cofiltered limits in $\Topi$ are computed at the level of underlying $\i$-categories by \cite[Theorem 6.3.3.1]{htt}. Define an object $$\overline{\cX} \in  \underset{\alpha,\beta} \lim \Spc/\left(X_\alpha \coprod Y_\beta\right)$$ by $$\overline{\cX}:=\left(X_\alpha \to X_\alpha \coprod Y_\beta\right)_{\alpha,\beta}.$$ Notice that each morphism $X_\alpha \to X_\alpha \coprod Y_\beta$ in $\Spc$ is a monomorphism, i.e. $\left(-1\right)$-truncated, as it is the inclusion of a union of connected components. It follows that we have a canonical identification
\begin{eqnarray*}
\left(\Spc/\left(\cX \coprod \cY\right)\right)/\overline{\cX} &\simeq& \underset{\alpha,\beta} \lim \Spc/X_\alpha\\
&\simeq& \underset{\beta} \lim \underset{\alpha} \lim \Spc/X_\alpha\\
&\simeq& \underset{\alpha} \lim \Spc/X_\alpha\\
&\simeq& \Spc/\cX.
\end{eqnarray*}
Moreover, it is easy to check that the induced morphism $$\left(\Spc/\left(\cX \coprod \cY\right)\right)/\overline{\cX} \to \left(\Spc/\left(\cX \coprod \cY\right)\right)$$ under the equivalence $\left(\Spc/\left(\cX \coprod \cY\right)\right)/\overline{\cX} \simeq \Spc/\cX$ is the functor $\Spc/\left(\blank\right)$ applied to the coprojection. Hence the coprojection is \'etale. Notice that, by defining $\overline{\cY}$ analogously, we have that $1\simeq \overline{\cX} \coprod \overline{\cY},$ hence the coprojection is also $\left(-1\right)$-truncated.
\end{proof}

\begin{corollary}\label{cor:coprodets}
If $\cX= \underset{\alpha} \coprod \cX_{\alpha}$ is a coproduct of profinite spaces, each coprojection $\cX_{\alpha} \to \cX$ is \'etale and $\left(-1\right)$-truncated.
\end{corollary}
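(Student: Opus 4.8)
The plan is to deduce the general statement from the two-summand case, which is precisely Lemma~\ref{lem:2coprodet}, by absorbing all but one of the summands into a single profinite space. The one auxiliary fact I would record first is that an arbitrary, possibly infinite, coproduct of profinite spaces is again a profinite space --- equivalently, that $\Profs = \Pro\left(\Spc^{\pi}\right)$ is cocomplete. This is standard: $\Spc^{\pi}$ is essentially small and admits finite limits, so $\Pro\left(\Spc^{\pi}\right)$ admits all small limits and colimits.

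Granting this, fix an index $\alpha_{0}$ and set $\cY \defeq \coprod_{\alpha \neq \alpha_{0}} \cX_{\alpha}$, which is then a profinite space. By the universal property of coproducts there is a canonical equivalence $\cX \simeq \cX_{\alpha_{0}} \coprod \cY$ in $\Profs$ under which the coprojection $\cX_{\alpha_{0}} \to \cX$ corresponds to the coprojection $\cX_{\alpha_{0}} \to \cX_{\alpha_{0}} \coprod \cY$ of a \emph{binary} coproduct. Now I would invoke Lemma~\ref{lem:2coprodet} directly: its proof uses nothing about the two factors beyond a pro-presentation of each and the fact that their coproduct is computed levelwise (and stays within $\pi$-finite spaces), so it applies verbatim with $\cY$ as the second factor. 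It therefore gives that $\cX_{\alpha_{0}} \to \cX_{\alpha_{0}} \coprod \cY$ is \'etale and $\left(-1\right)$-truncated, and transporting along the equivalence $\cX \simeq \cX_{\alpha_{0}} \coprod \cY$ yields the same for $\cX_{\alpha_{0}} \to \cX$.

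The only step carrying any real weight is the input that $\Profs$ is closed under arbitrary coproducts, since that is what lets $\cY$ be fed back into Lemma~\ref{lem:2coprodet}. I do not expect an obstacle there --- it follows from the general fact that $\Pro\left(\sC\right)$ is complete and cocomplete for any essentially small $\sC$ with finite limits --- but it deserves an explicit mention rather than being left implicit, since without it one would be forced into the clumsier route of writing $\coprod_{\alpha} \cX_{\alpha}$ as the filtered colimit of its finite sub-coproducts and separately verifying that \'etale, $\left(-1\right)$-truncated morphisms of profinite spaces are closed under such filtered colimits.
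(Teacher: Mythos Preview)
Your argument is correct and is exactly the paper's approach: define $\cY:=\coprod_{\beta\neq\alpha}\cX_\beta$ and apply Lemma~\ref{lem:2coprodet} to the binary coproduct $\cX_\alpha\coprod\cY$. Your explicit remark that $\Profs$ has arbitrary coproducts (so that $\cY$ is again a profinite space) is a welcome clarification of a point the paper leaves implicit.
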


\begin{proof}
Write $\cY:= \underset{\beta \ne \alpha} \coprod \cX_{\beta},$ then $\cX_\alpha \to \cX_\alpha \coprod \cY=\cX$ is \'etale by Lemma \ref{lem:2coprodet}.
\end{proof}

\begin{lemma}\label{lem:pullcop}
Let $\cE$ be an $\i$ topos and suppose that $$1_\cE=\underset{\alpha} \coprod E_\alpha.$$ Suppose furthermore that $f:\cF \to \cE$ is a geometric morphism. Then in $\Topi,$ $$\cF \simeq \underset{\alpha} \coprod \cF/f^*\left(E_\alpha\right).$$
\end{lemma}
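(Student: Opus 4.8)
The plan is to realize the desired equivalence as the canonical geometric morphism assembled from the \'etale coprojections. First I would apply $f^*$ to the given decomposition: since $f^*$ is left exact and preserves small colimits, it carries $1_\cE \simeq \coprod_\alpha E_\alpha$ to an identification $1_\cF \simeq \coprod_\alpha f^*\left(E_\alpha\right)$. Setting $U_\alpha := f^*\left(E_\alpha\right)$, it therefore suffices to prove the purely topos-theoretic statement: if $\cF$ is an $\i$-topos whose terminal object decomposes as a small coproduct $1_\cF \simeq \coprod_\alpha U_\alpha$, then the geometric morphism
$$c : \coprod_\alpha \cF/U_\alpha \longrightarrow \cF ,$$
induced via the universal property of the coproduct in $\Topi$ by the \'etale coprojections $\pi_\alpha : \cF/U_\alpha \to \cF$ (see \cite[Section 6.3.5]{htt}), is an equivalence.

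I would then pass to underlying $\i$-categories. By \cite[Proposition 6.3.2.3]{htt} colimits in $\Topi$ are computed as limits of the underlying $\i$-categories along inverse-image functors, so the underlying $\i$-category of $\coprod_\alpha \cF/U_\alpha$ is $\prod_\alpha \cF/U_\alpha$, and the inverse-image functor of $c$ is
$$\Phi := \left(\pi_\alpha^*\right)_\alpha : \cF \longrightarrow \prod_\alpha \cF/U_\alpha , \qquad Y \longmapsto \left(U_\alpha \times Y \to U_\alpha\right)_\alpha .$$
Since a geometric morphism whose inverse-image functor is an equivalence is itself an equivalence in $\Topi$ (the inverse of a left-exact equivalence is again left exact, hence is the inverse-image functor of an inverse geometric morphism), it suffices to show $\Phi$ is an equivalence of $\i$-categories.

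To see this I would exhibit an explicit inverse. Each $\pi_\alpha^*$ has a left adjoint $\pi_{\alpha !} : \cF/U_\alpha \to \cF$, namely post-composition of the structure map with $U_\alpha \to 1$; hence $\Phi$ has a left adjoint
$$\Psi : \prod_\alpha \cF/U_\alpha \longrightarrow \cF , \qquad \left(Z_\alpha \to U_\alpha\right)_\alpha \longmapsto \coprod_\alpha Z_\alpha .$$
The counit $\Psi\Phi \Rightarrow \id_\cF$ at an object $Y$ is the map $\coprod_\alpha \left(U_\alpha \times Y\right) \to Y$, which by universality of colimits in $\cF$ is the canonical equivalence $\coprod_\alpha \left(U_\alpha \times Y\right) \simeq \left(\coprod_\alpha U_\alpha\right) \times Y \simeq 1_\cF \times Y \simeq Y$. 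The unit $\id \Rightarrow \Phi\Psi$ at $\left(Z_\alpha\right)_\alpha$ has $\beta$-th component $Z_\beta \to U_\beta \times \coprod_\alpha Z_\alpha$; using universality of colimits again, together with the fact that coproducts in an $\i$-topos are disjoint (so the coprojections $U_\alpha \to 1_\cF$ are monomorphisms and $U_\beta \times_{1_\cF} U_\alpha$ is $U_\alpha$ for $\alpha = \beta$ and initial otherwise), this target is $\coprod_\alpha \left(U_\beta \times_{1_\cF} U_\alpha \times_{U_\alpha} Z_\alpha\right) \simeq Z_\beta$, and the unit is the resulting equivalence. Thus $\Phi$ is an equivalence, whence $c$ is, which is the claim.

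I do not anticipate a real obstacle: the two structural features of $\i$-topoi that carry the argument — universality of colimits and disjointness of coproducts (\cite[Theorem 6.1.0.6]{htt}) — force each of the identifications above, and packaging the inverse of $\Phi$ inside the adjunction $\Psi \dashv \Phi$ discharges all higher coherences for free rather than by hand. The only point worth stating carefully is a mild size hypothesis: one should either assume the indexing set is small, so that $\coprod_\alpha \cF/U_\alpha$ exists in $\Topi$, or note that, $\cF$ being presentable, any coproduct decomposition of $1_\cF$ is automatically indexed by a small set.
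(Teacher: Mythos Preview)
Your argument is correct, but it is considerably more hands-on than the paper's. The paper dispatches the lemma in a single chain of equivalences by invoking \cite[Proposition 6.3.5.14]{htt}, which asserts that for any $\i$-topos $\cF$ the functor $\chi_{\cF}:\cF\to\Topi$, $F\mapsto \cF/F$, preserves small colimits. Granting that, one has directly
\[
\underset{\alpha}\coprod\,\cF/f^*(E_\alpha)\;\simeq\;\cF\big/\underset{\alpha}\coprod f^*(E_\alpha)\;\simeq\;\cF/f^*\Big(\underset{\alpha}\coprod E_\alpha\Big)\;\simeq\;\cF/f^*(1_\cE)\;\simeq\;\cF/1_\cF\;\simeq\;\cF,
\]
using only that $f^*$ is colimit-preserving and left exact. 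Your proof, by contrast, first makes the same reduction to $1_\cF\simeq\coprod_\alpha U_\alpha$, then identifies the underlying $\i$-category of $\coprod_\alpha \cF/U_\alpha$ with $\prod_\alpha \cF/U_\alpha$ via \cite[Proposition 6.3.2.3]{htt}, and checks by hand that the comparison functor is an equivalence using universality of colimits and disjointness of coproducts. In effect you are re-deriving the special case of \cite[Proposition 6.3.5.14]{htt} needed here from the Giraud axioms. That buys you a self-contained argument with no black box, at the cost of length; the paper's route is a one-liner once the cited proposition is granted. Either way the mild size remark you make at the end is well taken.
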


\begin{proof}
Recall that
\begin{eqnarray*}
\chi_\cF:\cF &\to& \Topi\\
F &\mapsto& \cF/F
\end{eqnarray*}
preserves colimits  by \cite[Proposition 6.3.5.14]{htt}, and since so does $f^*$ and $f^*$ is left exact, it follows that
\begin{eqnarray*}
\underset{\alpha} \coprod \cF/f^*\left(E_\alpha\right) &\simeq& \cF/\left(\underset{\alpha} \coprod  f^*\left(E_\alpha\right)\right)\\
&\simeq& \cF/\left(f^*\left(\underset{\alpha} \coprod E_\alpha\right)\right)\\
&\simeq& \cF/f^*\left(1_\cE\right)\\
&\simeq& \cF/1_{\cF}\\
&\simeq& \cF.
\end{eqnarray*}
\end{proof}

\begin{corollary}\label{cor:infsp}
Let $\cY \to \underset{\alpha} \coprod \cX_{\alpha}$ be a map in $\Profs.$ Then we have a canonical splitting $$\cY \simeq \underset{\alpha} \coprod \left(\cY \times_{\cX} \cX_{\alpha}\right).$$
\end{corollary}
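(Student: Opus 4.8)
The plan is to transport the splitting to $\infty$-topoi through the fully faithful embedding $\Spc/(\blank)\colon \Profs \hookrightarrow \Topi$ and then invoke Lemma \ref{lem:pullcop}. Write $\cX := \coprod_\alpha \cX_\alpha$ and set $\cE := \Spc/\cX$. By Corollary \ref{cor:coprodets} each coprojection $\cX_\alpha \to \cX$ is \'etale and $(-1)$-truncated, so it corresponds to an object $E_\alpha \in \cE$ together with an equivalence $\cE/E_\alpha \simeq \Spc/\cX_\alpha$ over $\cE$; moreover, running the same cofiltered-limit computation as in the proof of Lemma \ref{lem:2coprodet} (where one obtains $1 \simeq \overline{\cX} \coprod \overline{\cY}$), the fact that $\cX$ is the coproduct of the $\cX_\alpha$ upgrades to a decomposition $1_\cE \simeq \coprod_\alpha E_\alpha$ in $\cE$. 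Now let $f\colon \Spc/\cY \to \cE$ be the geometric morphism induced by the given map $\cY \to \cX$ and apply Lemma \ref{lem:pullcop} with $\cF = \Spc/\cY$; this gives an equivalence in $\Topi$
\[
\Spc/\cY \;\simeq\; \coprod_\alpha \left(\Spc/\cY\right)\big/f^*\left(E_\alpha\right).
\]

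It remains to identify the summands and descend back to $\Profs$. Since $E_\alpha$ classifies the \'etale geometric morphism $\Spc/\cX_\alpha \to \cE$, the object $f^*(E_\alpha)$ classifies its pullback along $f$ (see \cite[\S 6.3.5]{htt}), so $\left(\Spc/\cY\right)/f^*(E_\alpha) \simeq \Spc/\cY \times_{\Spc/\cX} \Spc/\cX_\alpha$ in $\Topi$. The functor $\Spc/(\blank)\colon \Pro\left(\Spc\right) \to \Topi$ is right adjoint to $\Shape$, hence preserves limits, and the inclusion $\Profs \hookrightarrow \Pro\left(\Spc\right)$ preserves finite limits because $\Spc^\pi$ is closed under finite limits in $\Spc$; therefore $\left(\Spc/\cY\right)/f^*(E_\alpha) \simeq \Spc/\left(\cY \times_\cX \cX_\alpha\right)$, and we conclude $\Spc/\cY \simeq \coprod_\alpha \Spc/\left(\cY \times_\cX \cX_\alpha\right)$ in $\Topi$. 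Finally, presenting each $\cY \times_\cX \cX_\alpha$ as a cofiltered limit of $\pi$-finite spaces and arguing as in the proof of Lemma \ref{lem:2coprodet}, the coproduct $\coprod_\alpha \left(\cY \times_\cX \cX_\alpha\right)$ exists in $\Profs$ and is carried by $\Spc/(\blank)$ to the corresponding coproduct in $\Topi$; since $\Spc/(\blank)$ is fully faithful on $\Profs$, the equivalence descends to $\cY \simeq \coprod_\alpha \left(\cY \times_\cX \cX_\alpha\right)$ in $\Profs$.

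I expect the last step to be the main obstacle: because $\Spc/(\blank)$ is a right adjoint it need not preserve colimits, so the passage from the topos-level splitting back to a splitting of profinite spaces is not formal and must go through the explicit presentation of coproducts of profinite spaces from Lemma \ref{lem:2coprodet}. A secondary point requiring care is verifying that $f^*(E_\alpha)$ is classified by the fibre product $\cY \times_\cX \cX_\alpha$ itself rather than by some coarser pullback, which is exactly where left-exactness of $\Spc/(\blank)$ and of $\Profs \hookrightarrow \Pro(\Spc)$ is used.
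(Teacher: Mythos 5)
Up to the topos-level splitting your argument is exactly the paper's: you use Corollary \ref{cor:coprodets} to produce the subterminal objects $E_\alpha$, Lemma \ref{lem:pullcop} to split $\Spc/\cY$ in $\Topi$, \cite[Remark 6.3.5.8]{htt} to identify the summands as pullbacks, and limit-preservation of the embedding to recognize them as $\Spc/\left(\cY\times_{\cX}\cX_\alpha\right)$. The genuine gap is in your final descent step, and it is precisely the point you flagged as the main obstacle. Your plan is to show that $\Spc/\left(\blank\right)$ carries the coproduct $\coprod_\alpha\left(\cY\times_\cX\cX_\alpha\right)$ formed in $\Profs$ to the coproduct in $\Topi$, ``arguing as in Lemma \ref{lem:2coprodet}.'' That argument is only available for finitely many summands: the proof of Lemma \ref{lem:2coprodet} rests on presenting the binary coproduct levelwise as $\lim_{\alpha,\beta} j\left(X_\alpha\coprod Y_\beta\right)$, and an infinite coproduct in $\Profs$ has no such levelwise presentation, since an infinite disjoint union of $\pi$-finite spaces is not $\pi$-finite and the coproduct in $\Profs$ is a completed object. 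Indeed the claimed preservation is false in general: the coproduct in $\Profs$ of countably many one-point spaces is the profinite completion of the discrete set $\mathbb{N}$, and $\Spc/\left(\blank\right)$ sends it to the cofiltered limit $\lim_F \Spc/F$ over the finite quotients $F$ of $\mathbb{N}$ (sheaves on the Stone space of $\mathbb{N}$), in which the terminal object is \emph{not} the coproduct of the point-subterminals; whereas the coproduct in $\Topi$ of countably many copies of $\Spc$ is $\Fun\left(\mathbb{N},\Spc\right)$, where it is. Since the corollary is later applied to infinite families (e.g.\ in Corollary \ref{cor:fixed2}), the finite case does not suffice. The same caveat applies to your justification of $1_{\cE}\simeq\coprod_\alpha E_\alpha$ by ``the same cofiltered-limit computation'': for infinitely many summands that computation is not available either (this is the step the paper leaves implicit when invoking Lemma \ref{lem:pullcop}).

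The paper's descent is different and avoids the issue entirely: $\Profs$ sits inside $\Topi$ as a \emph{reflective} subcategory, the reflector being the profinite shape functor. Applying the reflector to the topos-level equivalence $\Spc/\cY\simeq\coprod_\alpha\Spc/\left(\cY\times_\cX\cX_\alpha\right)$ uses only that a left adjoint preserves coproducts and that the reflector restricts to the identity (up to equivalence) on objects of $\Profs$: the left-hand side becomes $\cY$ and the right-hand side becomes the coproduct in $\Profs$ of the $\cY\times_\cX\cX_\alpha$. No preservation of coproducts by the right-adjoint embedding is needed, and no levelwise model of the infinite coproduct is required. Replace your last step with this reflector argument (this is what the paper means by ``since $\Profs$ is a reflective subcategory of $\Topi$''), and the rest of your write-up coincides with the paper's proof.
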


\begin{proof}
Using the notation of Lemma \ref{lem:pullcop}, let $\cE:=\Spc/\left(\underset{\alpha} \coprod \cX_{\alpha}\right),$ with each $E_\alpha$ corresponding to the \'etale geometric morphism $\Spc/\cX_\alpha \to \Spc/\cX$ via Corollary \ref{cor:coprodets}, and let $\cF:=\Spc/\cY.$

Recall that, by \cite[Remark 6.3.5.8]{htt}, for each $\alpha,$ the following diagram is Cartesian
$$\xymatrix{\cF/f^*\left(E_\alpha\right) \ar[r] \ar[d] & \cF \ar[d]\\
\cE/E_\alpha \ar[r] & \cE.}$$
The result now follows from Lemma \ref{lem:pullcop}, since $\Profs$ is a reflective subcategory of $\Topi.$
\end{proof}

\begin{corollary}
For $\cX,$ $\cY,$ and $\cZ$ arbitrary profinite spaces 
$$\Map\left(\cZ,\cX \coprod \cY\right)\simeq \underset{\cZ= \cZ_\cX \coprod \cZ_{\cY}} \coprod \left(\Map\left(\cZ_\cX,\cX\right) \times \Map\left(\cZ_\cY,\cY\right)\right).$$
\end{corollary}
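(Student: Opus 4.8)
The plan is to realise $\Map(\cZ, \cX \coprod \cY)$ as the total space of a map of spaces over $\Map(\cZ, \ast \coprod \ast)$, where $\ast \coprod \ast$ denotes the two-point profinite space, and then to use that the total space of a map of spaces with discrete target is the coproduct of its fibers. First I would form the canonical map $q \colon \cX \coprod \cY \to \ast \coprod \ast$, obtained as the coproduct of the two terminal maps, together with the induced map of spaces $q_{*} \colon \Map(\cZ, \cX \coprod \cY) \to \Map(\cZ, \ast \coprod \ast)$.

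Next I would check that $S := \Map(\cZ, \ast \coprod \ast)$ is a discrete set. Indeed $\ast \coprod \ast$ is $0$-truncated in $\Profs = \Pro(\Spc^{\pi})$, since the fully faithful inclusion $\Spc^{\pi} \hookrightarrow \Profs$ preserves finite limits and hence truncation levels; writing $\cZ \simeq \lim_{i} j(Z_i)$ as a cofiltered limit of $\pi$-finite spaces, Proposition \ref{prop:lex} gives $S \simeq \colim_{i} \Map_{\Spc^{\pi}}(Z_i, \ast \coprod \ast)$, a filtered colimit of discrete spaces, so $S$ is discrete. By Corollary \ref{cor:infsp}, each point $s \in S$ determines a coproduct decomposition $\cZ \simeq \cZ_{\cX}^{s} \coprod \cZ_{\cY}^{s}$, where $\cZ_{\cX}^{s}$ and $\cZ_{\cY}^{s}$ are the pullbacks of $\cZ$ along the two coprojections $\ast \to \ast \coprod \ast$, and conversely every such decomposition of $\cZ$ arises in this way; thus $S$ is precisely the set of decompositions $\cZ = \cZ_{\cX} \coprod \cZ_{\cY}$. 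Since $q_{*}$ has discrete target, this gives $\Map(\cZ, \cX \coprod \cY) \simeq \coprod_{s \in S} q_{*}^{-1}(s)$.

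It then remains to identify the fiber $q_{*}^{-1}(s)$. By the standard identification of the fiber of a mapping space with a mapping space in a slice category (cf.\ the use of \cite[Proposition 5.5.5.12]{htt} in the proof of Lemma \ref{lem:BAut}), $q_{*}^{-1}(s)$ is the space of maps from $(\cZ \to \ast \coprod \ast)$ to $(q \colon \cX \coprod \cY \to \ast \coprod \ast)$ computed in $\Profs_{/(\ast \coprod \ast)}$. Now Corollary \ref{cor:infsp}, together with the disjointness of the coproduct $\ast \coprod \ast$ --- i.e.\ that the two coprojections are complementary $(-1)$-truncated subobjects, which is part of the content of Lemma \ref{lem:2coprodet} and Lemma \ref{lem:pullcop} --- shows that pulling back along the two coprojections defines an equivalence $\Profs_{/(\ast \coprod \ast)} \simeq \Profs \times \Profs$ carrying $(\cZ \to \ast \coprod \ast)$ to $(\cZ_{\cX}^{s}, \cZ_{\cY}^{s})$ and $(\cX \coprod \cY \to \ast \coprod \ast)$ to $(\cX, \cY)$. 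Hence $q_{*}^{-1}(s) \simeq \Map(\cZ_{\cX}^{s}, \cX) \times \Map(\cZ_{\cY}^{s}, \cY)$, and summing over $s \in S$ yields the claimed formula.

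The step I expect to require the most care is making precise that $\Map(\cZ, \ast \coprod \ast)$ is genuinely discrete --- this is what makes ``the set of decompositions $\cZ = \cZ_{\cX} \coprod \cZ_{\cY}$'' an honest indexing set and what lets $q_{*}$ be reconstructed from its fibers --- together with the bookkeeping identifying $\Profs_{/(\ast \coprod \ast)}$ with $\Profs \times \Profs$ compatibly with the objects $\cZ$ and $\cX \coprod \cY$. Both ultimately rest on Corollary \ref{cor:infsp} and on the disjointness and universality of finite coproducts of profinite spaces recorded in Lemmas \ref{lem:2coprodet}--\ref{lem:pullcop}.
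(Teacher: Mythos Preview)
Your argument is correct. The paper's own proof is the single line ``This follows immediately from Corollary~\ref{cor:infsp},'' and what you have written is a careful fleshing-out of that sentence: both rest on the canonical splitting of $\cZ$ supplied by Corollary~\ref{cor:infsp}. Your auxiliary map $q\colon\cX\coprod\cY\to\ast\coprod\ast$ and the identification of the index set with the discrete space $\Map(\cZ,\ast\coprod\ast)$ are not in the paper, but they are a clean way to make ``the set of decompositions $\cZ=\cZ_{\cX}\coprod\cZ_{\cY}$'' into an honest object. One small suggestion: rather than invoking an equivalence $\Profs_{/(\ast\coprod\ast)}\simeq\Profs\times\Profs$ (true, but not stated in the paper), you can compute the fibre more directly by observing that Lemma~\ref{lem:sharkfeet} gives the pullback square $\cX\simeq(\cX\coprod\cY)\times_{\ast\coprod\ast}\ast$, so a lift $\cZ_{\cX}^{s}\to\cX\coprod\cY$ of the first coprojection is exactly a map $\cZ_{\cX}^{s}\to\cX$, and similarly for $\cY$; this stays entirely within the lemmas already proved.
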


\begin{proof}
This follows immediately from Corollary \ref{cor:infsp}.
\end{proof}

\begin{lemma}\label{lem:sharkfeet}
Suppose that $\cZ,$ $\cX,$ and $\cY$ are profinite spaces. Suppose further that there is a decomposition $\cZ=\cZ_{\cX} \coprod \cZ_{\cY}$ and maps $$f_\cX:\cZ_{\cX} \to \cX$$ and $$f_\cY:\cZ_{\cY} \to \cY$$ and let $f$ be the induced map $$\cZ \to \cX \coprod \cY.$$ Then
$$\cZ_{\cX} \simeq \cZ\times_{\cX \coprod \cY} \cX.$$
\end{lemma}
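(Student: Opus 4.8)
The plan is to transport the whole statement along the fully faithful functor $\Spc/(\blank)\colon\Profs\to\Topi$ of \cite[Theorem E.2.4.1]{SAG}. Being a right adjoint it preserves finite limits, and by Lemma \ref{lem:2coprodet}, Corollary \ref{cor:coprodets} and Lemma \ref{lem:pullcop} it also preserves finite coproducts and the initial object; in particular $\Spc/(\cX\coprod\cY)\simeq\Spc/\cX\coprod\Spc/\cY$ in $\Topi$, so $1_{\Spc/(\cX\coprod\cY)}\simeq E_\cX\coprod E_\cY$ for complementary $(-1)$-truncated objects $E_\cX,E_\cY$ with $\bigl(\Spc/(\cX\coprod\cY)\bigr)/E_\cX\simeq\Spc/\cX$ (this slice being the \'etale geometric morphism attached to the coprojection, via Corollary \ref{cor:coprodets}), and similarly $1_{\Spc/\cZ}\simeq Z_\cX\coprod Z_\cY$ with $(\Spc/\cZ)/Z_\cX\simeq\Spc/\cZ_\cX$.

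The key step is then to check that $g:=\Spc/(f)\colon\Spc/\cZ\to\Spc/(\cX\coprod\cY)$ satisfies $g^*(E_\cX)\simeq Z_\cX$. Since by hypothesis $f$ is the morphism induced by the decomposition $\cZ=\cZ_\cX\coprod\cZ_\cY$ together with $f_\cX,f_\cY$, under the identification $\Spc/\cZ\simeq\Spc/\cZ_\cX\coprod\Spc/\cZ_\cY$ the geometric morphism $g$ is classified by the pair of composites $\Spc/\cZ_\cX\xrightarrow{\Spc/(f_\cX)}\Spc/\cX\hookrightarrow\Spc/(\cX\coprod\cY)$ and $\Spc/\cZ_\cY\xrightarrow{\Spc/(f_\cY)}\Spc/\cY\hookrightarrow\Spc/(\cX\coprod\cY)$. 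Restricting to the two summands, the first coprojection pulls $E_\cX$ back to the terminal object and the second pulls it back to the initial object (since $E_\cX$ is precisely the subterminal object cutting out the $\cX$-summand), so $g^*(E_\cX)$ is the subterminal object $Z_\cX$ of the $\cZ_\cX$-summand. Having done this, base change of \'etale geometric morphisms being \'etale (\cite[Remark 6.3.5.8]{htt}) gives
\begin{align*}
\Spc/\bigl(\cZ\times_{\cX\coprod\cY}\cX\bigr) &\simeq (\Spc/\cZ)\times_{\Spc/(\cX\coprod\cY)}\bigl((\Spc/(\cX\coprod\cY))/E_\cX\bigr)\\
&\simeq (\Spc/\cZ)/g^*(E_\cX)\simeq(\Spc/\cZ)/Z_\cX\simeq\Spc/\cZ_\cX ,
\end{align*}
and full faithfulness of $\Spc/(\blank)$ on $\Profs$ upgrades this to an equivalence $\cZ\times_{\cX\coprod\cY}\cX\simeq\cZ_\cX$, compatible with the maps to $\cZ$ because every step is natural.

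Equivalently, one may run the argument entirely inside $\Profs$: Corollary \ref{cor:infsp} applied to $f$ gives a canonical splitting $\cZ\simeq(\cZ\times_{\cX\coprod\cY}\cX)\coprod(\cZ\times_{\cX\coprod\cY}\cY)$, the factorisations $f|_{\cZ_\cX}=(\cX\hookrightarrow\cX\coprod\cY)\circ f_\cX$ and $f|_{\cZ_\cY}=(\cY\hookrightarrow\cX\coprod\cY)\circ f_\cY$ produce canonical maps $\cZ_\cX\to\cZ\times_{\cX\coprod\cY}\cX$ and $\cZ_\cY\to\cZ\times_{\cX\coprod\cY}\cY$, their coproduct is an endomorphism of $\cZ$ over $\cZ$ hence an equivalence, and disjointness of finite coproducts forces it to be an equivalence on each factor. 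I do not expect any genuine difficulty here: the only real work is the bookkeeping that $g^*$ carries $E_\cX$ to $Z_\cX$ rather than to $Z_\cY$, together with the (routine, from the appendix lemmas) verification that $\Spc/(\blank)$ preserves finite coproducts, so that the complementary subterminal objects $E_\cX,Z_\cX$ exist and are as described.
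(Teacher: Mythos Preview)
Your argument is correct and follows the same overall strategy as the paper's: embed into $\Topi$ via the fully faithful $\Spc/(\blank)$, use \cite[Remark~6.3.5.8]{htt} to identify the fibered product with a slice $(\Spc/\cZ)/g^*(E_\cX)$, and reduce everything to checking $g^*(E_\cX)\simeq Z_\cX$. The only difference is in how that last identification is verified: the paper chooses explicit level representations $(Z^\cX_\alpha\to X_\alpha)_\alpha$ and $(Z^\cY_\beta\to Y_\beta)_\beta$ of $f_\cX$ and $f_\cY$, writes $f$ as the cofiltered system $(Z^\cX_\alpha\coprod Z^\cY_\beta\to X_\alpha\coprod Y_\beta)_{\alpha,\beta}$, and computes the pullback levelwise, whereas you argue summandwise using $\Spc/\cZ\simeq\Spc/\cZ_\cX\coprod\Spc/\cZ_\cY$ and disjointness of the subterminals $E_\cX,E_\cY$. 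Both are fine; your version avoids the bookkeeping of level representations at the cost of first checking that $\Spc/(\blank)$ preserves binary coproducts, which as you note is immediate from Lemma~\ref{lem:2coprodet}.
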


\begin{proof}
It suffices to prove that there is a pullback diagram in $\Topi$
$$\xymatrix{\Spc/\cZ_{\cX} \ar[r] \ar[d] & \Spc/\cX \ar[d]\\
\Spc/\cZ \ar[r]^-{f} & \Spc/\left(\cX \coprod \cY\right).}$$
Let $\overline{\cX} \in \Spc/\left(\cX \coprod \cY\right)$ be the object corresponding to the \'etale geometric morphism $$\Spc/\cX \to \Spc/\left(\cX \coprod \cY\right).$$ Then by \cite[Remark 6.3.5.8]{htt}, we have that
$$\left(\Spc/\cZ\right) \times_{\Spc/\left(\cX \coprod \cY\right)} \Spc/\cX \simeq \left(\Spc/\cZ\right)/\left(f^*\left(\overline{\cX}\right)\right).$$ It therefore suffices to prove that $f^*\left(\overline{\cX}\right) \simeq \overline{\cZ_{\cX}}.$

Choose level representations $\left(Z^{\cX}_\alpha \to X_\alpha\right)_\alpha$ and $\left(Z^{\cY}_\beta \to Y_\beta\right)_{\beta}$ of $f_{\cX}$ and $f_{\cY}$ respectively. Then $\left(Z^{\cX}_\alpha \coprod Z^{\cY}_\beta \to X_\alpha \coprod Y_\beta\right)_{\alpha,\beta}$ is a level representation of $f.$ It follows, that in $$\Spc/\cZ \simeq \underset{\alpha,\beta} \lim \Spc/\left(Z^{\cX}_\alpha \coprod Z^{\cY}_\beta\right),$$ we have that 
\begin{eqnarray*}
f^*\left(\overline{\cX}\right)&\simeq& \left(\left(Z^{\cX}_\alpha \coprod Z^{\cY}_\beta\right) \times_{X_\alpha \coprod Y_{\beta}} X_\alpha \to \left(Z^{\cX}_\alpha \coprod Z^{\cY}_\beta\right)\right)_{\alpha,\beta}\\
&\simeq& \left(Z^{\cX}_\alpha \to \left(Z^{\cX}_\alpha \coprod Z^{\cY}_\beta\right)\right)_{\alpha,\beta}\\
&\simeq& \overline{\cZ_{\cX}}.
\end{eqnarray*}
\end{proof}

\begin{definition}
Let $\sC$ be an $\i$-category with fibered products. We say that \textbf{coproducts are universal} in $\sC$ if for all $f:D \to C,$ the functor
\begin{eqnarray*}
f^*:\sC/C &\to& \sC/D\\
\left(E \to C\right) &\mapsto& \left(D \times_{C} E \to D\right)
\end{eqnarray*}
preserves small coproducts.
\end{definition}

\begin{remark}
If $\sC$ is locally Cartesian closed, then coproducts are universal in $\sC,$ since $f^*$ then has a right adjoint, e.g. if $\sC$ is any $\i$-topos.
\end{remark}

\begin{proposition}\label{prop:obs}
Let $f:\cY \to \underset{\alpha} \coprod \cX_{\alpha}\simeq \cX$ be a morphism in $\sC$ and assume that coproduct are universal. Then there is a canonical splitting $\cY \simeq \underset{\alpha} \coprod \left(\cY \times_{\cX} \cX_\alpha\right)$ such that $f$ is induced by the family $$\left(\cY \times_{\cX} \cX_\alpha \to \cX_\alpha\right)_\alpha.$$
\end{proposition}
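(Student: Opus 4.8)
The plan is to deduce everything formally from the hypothesis, which says precisely that the pullback functor $f^{*}\colon \sC/\cX \to \sC/\cY$ preserves small coproducts. The idea is to transport, across $f^{*}$, the tautological coproduct decomposition of the terminal object of $\sC/\cX$, and then to read off the resulting decomposition of $\cY$ and of $f$ by applying the forgetful functor and the post-composition functor.

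First I would observe that the coprojections $\iota_\alpha\colon \cX_\alpha \to \cX$ exhibit the terminal object $(\cX \xrightarrow{\id} \cX)$ of $\sC/\cX$ as the coproduct $\coprod_\alpha (\cX_\alpha \xrightarrow{\iota_\alpha} \cX)$: by construction the map $\coprod_\alpha \cX_\alpha \to \cX$ restricting to $\iota_\alpha$ on each summand is the identity of $\cX = \coprod_\alpha \cX_\alpha$. Since $f^{*}$ is a pullback functor it carries this terminal object to the terminal object $(\cY \xrightarrow{\id} \cY)$ of $\sC/\cY$ and carries $(\cX_\alpha \xrightarrow{\iota_\alpha}\cX)$ to $(\cY\times_\cX\cX_\alpha \to \cY)$; as it preserves small coproducts by hypothesis, we obtain a canonical equivalence
\[
(\cY \xrightarrow{\id}\cY)\;\simeq\;\coprod_\alpha \bigl(\cY\times_\cX\cX_\alpha \to \cY\bigr)
\]
in $\sC/\cY$. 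Applying the forgetful functor $\sC/\cY \to \sC$, which preserves colimits (colimits in an over-category are computed in the ambient category) and in particular coproducts, gives the asserted splitting $\cY \simeq \coprod_\alpha(\cY\times_\cX\cX_\alpha)$. To identify $f$ I would then apply the post-composition functor $f_{!}\colon \sC/\cY \to \sC/\cX$, which preserves coproducts since it is left adjoint to $f^{*}$: because $f_{!}(\cY\xrightarrow{\id}\cY) = (\cY\xrightarrow{f}\cX)$ and $f_{!}(\cY\times_\cX\cX_\alpha \to \cY)$ is the structure map $\cY\times_\cX\cX_\alpha \to \cX$, which by the defining pullback square factors as $\iota_\alpha$ composed with the projection $\pi_\alpha\colon \cY\times_\cX\cX_\alpha \to \cX_\alpha$, we get $(\cY\xrightarrow{f}\cX)\simeq \coprod_\alpha(\cY\times_\cX\cX_\alpha \xrightarrow{\iota_\alpha\circ\pi_\alpha}\cX)$ in $\sC/\cX$. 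Reading off the underlying object and structure map of this coproduct shows that $f$ is exactly the map induced by the family $(\cY\times_\cX\cX_\alpha \to \cX_\alpha)_\alpha$, as claimed.

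Since every step is a manipulation of adjoints and slice $\infty$-categories, there is no genuine obstacle; the only points requiring a little care are making the tautological coproduct decomposition of the terminal object of $\sC/\cX$ precise, and invoking the standard facts that the forgetful functor out of an over-category preserves colimits and that the adjunction $f_{!} \dashv f^{*}$ forces $f_{!}$ to preserve all colimits, while the hypothesis supplies that $f^{*}$ preserves coproducts.
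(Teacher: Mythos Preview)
Your proposal is correct and follows essentially the same approach as the paper: pull back the tautological coproduct decomposition $\cX \simeq \coprod_\alpha \cX_\alpha$ along $f$ using that $f^*$ preserves coproducts. The paper's version is terser---it simply writes the chain $\cY \simeq f^*\bigl(\coprod_\alpha \cX_\alpha\bigr) \simeq \coprod_\alpha f^*(\cX_\alpha) \simeq \coprod_\alpha (\cY \times_\cX \cX_\alpha)$ and does not separately spell out the identification of $f$---whereas you carry out the argument explicitly in slice categories and use $f_!$ to verify the second claim, which is if anything more careful.
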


\begin{proof}
Notice that we have a pullback diagram
$$\xymatrix{ f^*\left(\underset{\alpha} \coprod \cX_{\alpha}\right) \ar[r]^-{\sim} \ar[d] & \cY \ar[d]^-{f}\\
\underset{\alpha} \coprod \cX_{\alpha} \ar[r]^-{\sim} & \cX}$$
Since $f^*$ preserves coproducts, we have that 
\begin{eqnarray*}
\cY &\simeq& f^*\left(\underset{\alpha} \coprod \cX_{\alpha}\right)\\
&\simeq& \underset{\alpha} \coprod f^*\left(\cX_\alpha\right)\\
&\simeq& \underset{\alpha} \coprod \left(\cY \times_{\cX} \cX_\alpha\right).
\end{eqnarray*}
\end{proof}

\begin{remark}
Let $C$ be an object of an $\i$-category $\sC,$ then the canonical functor $$\sC/C \to \Pro\left(\sC\right)/j\left(C\right)$$ induces a cofiltered limit preserving functor $$\Pro\left(\sC/C\right) \to \Pro\left(\sC\right)/j\left(C\right).$$
\end{remark}

Suppose that coproducts are universal in $\sC$ and $\left(\cY^i \to \underset{\alpha} \coprod \cX_\alpha\right)_i$ is a cofiltered system in $\sC/\left(\underset{\alpha} \coprod \cX_\alpha\right).$ By the above remark, there is a canonical map $$f:\underset{i} \lim j\left(\cY^i\right) \to j\left(\underset{\alpha} \coprod \cX_\alpha\right)$$ in $\Pro\left(\sC\right).$ Notice, moreover, that since coproducts are universal, by Proposition \ref{prop:obs}, for each $i$ we have a canonical splitting $$\cY^i\simeq \underset{\alpha} \coprod \cY^i_\alpha$$ together with canonical maps $$f_\alpha:\cY^i_\alpha \to \cX_\alpha$$ inducing $f.$ Suppose furthermore that there is a coproduct preserving functor $$\Pi:\sC \to \Profs$$ which induces a cofiltered limit preserving functor $$\Pi^{\pro}:\Pro\left(\sC\right) \to \Profs.$$

\begin{theorem}
In the situation above, $$\Pi^{\pro}\left(\underset{i} \lim j\left(\cY^i\right)\right) \simeq \underset{\alpha} \coprod \underset{i} \lim \Pi\left(\cY^i_\alpha\right).$$
\end{theorem}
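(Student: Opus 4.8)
The plan is to reduce the statement to an interchange of a cofiltered limit with an infinite coproduct in $\Profs$. Such an interchange fails for a general cofiltered system, but here the coproduct is ``leveled'' over the fixed index set of the $\cX_\alpha$, and the appendix results on coproducts of profinite spaces let us carry it out one summand at a time.

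First I would rewrite the left-hand side entirely inside $\Profs$. Since $\Pi^{\pro}$ preserves cofiltered limits and, by the universal property of $\Pro\left(\sC\right)$, satisfies $\Pi^{\pro}\circ j\simeq\Pi$, we get $\Pi^{\pro}\left(\underset{i} \lim\, j\left(\cY^i\right)\right)\simeq \underset{i} \lim\,\Pi\left(\cY^i\right)$, the limit now formed in $\Profs$. By Proposition \ref{prop:obs} each $\cY^i\simeq\coprod_\alpha\cY^i_\alpha$ in $\sC$, and since $\Pi$ preserves coproducts, $\Pi\left(\cY^i\right)\simeq\coprod_\alpha\Pi\left(\cY^i_\alpha\right)$; write $Z^i_\alpha:=\Pi\left(\cY^i_\alpha\right)$, so the left-hand side is $\underset{i}\lim\coprod_\alpha Z^i_\alpha$. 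The original system lives in $\sC/\coprod_\alpha\cX_\alpha$, so applying $\Pi^{\pro}$ to the tautological cone $\underset{i}\lim\, j\left(\cY^i\right)\to j\left(\coprod_\alpha\cX_\alpha\right)$, and using $\Pi^{\pro}\left(j\left(\coprod_\alpha\cX_\alpha\right)\right)\simeq\coprod_\alpha\Pi\left(\cX_\alpha\right)$, produces a canonical map $\underset{i}\lim\coprod_\alpha Z^i_\alpha\to\coprod_\alpha\Pi\left(\cX_\alpha\right)$ which, level by level in $i$, is the map induced by the family $\left(\Pi\left(f_\alpha^i\right)\colon Z^i_\alpha\to\Pi\left(\cX_\alpha\right)\right)_\alpha$.

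Next I would split the left-hand side along that map. By Corollary \ref{cor:coprodets} each coprojection $\Pi\left(\cX_\alpha\right)\to\coprod_\beta\Pi\left(\cX_\beta\right)$ is \'etale, so Corollary \ref{cor:infsp} gives
\[
\underset{i} \lim\coprod_\alpha Z^i_\alpha \;\simeq\; \coprod_\alpha\Bigl(\bigl(\underset{i} \lim\coprod_\beta Z^i_\beta\bigr)\times_{\coprod_\beta\Pi\left(\cX_\beta\right)}\Pi\left(\cX_\alpha\right)\Bigr).
\]
It remains to identify the $\alpha$-th summand with $\underset{i}\lim\,Z^i_\alpha$. Since a pullback is a finite limit and $\coprod_\beta\Pi\left(\cX_\beta\right)$ and $\Pi\left(\cX_\alpha\right)$ are constant in $i$, limits commute with limits and that summand equals $\underset{i}\lim\bigl(\coprod_\beta Z^i_\beta\times_{\coprod_\beta\Pi\left(\cX_\beta\right)}\Pi\left(\cX_\alpha\right)\bigr)$; and Lemma \ref{lem:sharkfeet}, applied with the two-term decompositions $\coprod_\beta Z^i_\beta=Z^i_\alpha\coprod\coprod_{\beta\ne\alpha}Z^i_\beta$ and $\coprod_\beta\Pi\left(\cX_\beta\right)=\Pi\left(\cX_\alpha\right)\coprod\coprod_{\beta\ne\alpha}\Pi\left(\cX_\beta\right)$, identifies $\coprod_\beta Z^i_\beta\times_{\coprod_\beta\Pi\left(\cX_\beta\right)}\Pi\left(\cX_\alpha\right)\simeq Z^i_\alpha$. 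Hence the $\alpha$-th summand is $\underset{i}\lim\,Z^i_\alpha=\underset{i}\lim\,\Pi\left(\cY^i_\alpha\right)$, and reassembling over $\alpha$, together with the identification of the left-hand side from the previous paragraph, yields $\Pi^{\pro}\left(\underset{i}\lim\, j\left(\cY^i\right)\right)\simeq\coprod_\alpha\underset{i}\lim\,\Pi\left(\cY^i_\alpha\right)$, as desired.

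I expect the main obstacle to be precisely this interchange of $\underset{i}\lim$ with $\coprod_\alpha$: it is false for a general cofiltered system of profinite spaces, and making it work hinges on exploiting the compatible maps to the fixed object $\coprod_\alpha\cX_\alpha$ together with the \'etaleness and pullback-stability of coprojections of profinite spaces supplied in the appendix. A secondary point needing care is bookkeeping of where each limit or colimit is formed --- in $\Pro\left(\sC\right)$, in $\Profs$, or in $\Topi$ --- and using only that $\Pi^{\pro}$ preserves cofiltered limits, since it is not assumed to be left exact.
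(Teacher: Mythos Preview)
Your proposal is correct and follows essentially the same route as the paper's own proof: rewrite the left-hand side as $\underset{i}\lim\coprod_\alpha\Pi\left(\cY^i_\alpha\right)$, use the canonical map to $\coprod_\alpha\Pi\left(\cX_\alpha\right)$ together with Corollary~\ref{cor:infsp} to split it, commute the cofiltered limit past the pullback, and then invoke Lemma~\ref{lem:sharkfeet} to identify each summand. Your explicit reduction of Lemma~\ref{lem:sharkfeet} to the two-term case via $\coprod_\beta=\left(\alpha\right)\coprod\left(\beta\ne\alpha\right)$ is a nice bit of care that the paper leaves implicit.
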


\begin{proof}
Let $\cZ:=\Pi^{\pro}\left(\underset{i} \lim  j\left(\cY^i\right)\right).$ By definition, $$\Pi^{\pro}\left(\underset{i} \lim  j\left(\cY^i\right)\right) \simeq \underset{i} \lim \Pi\left(\underset{\alpha} \coprod \cY^i_\alpha\right),$$ and since $\Pi$ preserves coproducts we have $$\cZ\simeq \underset{i} \lim \underset{\alpha} \coprod \Pi\left(\cY^i_\alpha\right).$$ Similarly, we have a canonical equivalence $$\Pi^{\pro}\left(j\left(\underset{\alpha} \coprod \cX_\alpha\right)\right) \simeq \underset{\alpha} \coprod \Pi\left(\cX_\alpha\right).$$ Since we have a canonical map $$\Pi\left(f\right):\cZ \to \underset{\alpha} \coprod \Pi\left(\cX_\alpha\right),$$ by Corollary \ref{cor:infsp}, we have a canonical splitting 
$$
\cZ \simeq \underset{\alpha'} \coprod \left(\left(\underset{i} \lim \underset{\alpha} \coprod \Pi\left(\cY^i_\alpha\right) \right)  \times_{\underset{\alpha}  \coprod \Pi\left(\cX_\alpha\right)} \Pi \left(\cX_{\alpha'}\right) 
\right).
$$
For each $\alpha'$ let $$\cZ_{\alpha'}:=\left(\left(\underset{i} \lim \underset{\alpha}  
\coprod \Pi\left(\cY^i_\alpha\right)\right)
\times_{\underset{\alpha} \coprod \Pi\left(\cX_\alpha\right)}  \Pi\left(\cX_{\alpha'}\right) 
\right)
.$$ 
Since limits commute with limits, we have
$$\cZ_{\alpha'} \simeq \underset{i} \lim \left(\underset{\alpha} \coprod \Pi\left(\cY^i_\alpha\right)\right)  
\times_{\underset{\alpha} \coprod \Pi\left(\cX_\alpha\right)} 
\Pi\left(\cX_{\alpha'}\right)
$$ 
and by Lemma \ref{lem:sharkfeet}, for each $i$ we have 
$$
\left(\underset{\alpha} \coprod \Pi\left(
\cY^i_\alpha \right) \right)\times_{\underset{\alpha} \coprod \Pi\left(\cX_\alpha\right)} 
\Pi\left(\cX_{\alpha'}\right)
\simeq \Pi\left(\cY^i_{\alpha'}
\right).
$$ Thus $\cZ_{\alpha'}\simeq \underset{i} \lim \Pi\left(\cY^i_{\alpha'}\right),$ and hence 
$$\cZ \simeq \underset{\alpha} \coprod \underset{i} \lim \Pi\left(\cY^i_{\alpha}\right).$$
\end{proof}

The following two corollaries are immediate special cases:

\begin{corollary}\label{cor:fixed2}
Let $X$ be a fine saturated log scheme. Let $\left(U_\alpha \to X\right)_\alpha$ be a collection of \'etale maps. Let $\sC=\Sh\left(\Aff',\et\right)$ and $\Pi=\widehat{\Pi}^{\et}_{\i}.$ Then
$$\widehat{\Pi}^{\et}_{\i}\left(\underset{n} \lim j\left(\underset{\alpha} \coprod \left(\radice{n}{X} \times_{X} U_\alpha\right)\right)\right) \simeq \underset{\alpha} \coprod \underset{n} \lim  \widehat{\Pi}^{\et}_{\i}\left(\radice{n}{X} \times_{X} U_\alpha\right).$$
\end{corollary}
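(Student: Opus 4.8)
The plan is to deduce this directly from the preceding theorem, specialized as follows. Take $\sC = \Sh\left(\Aff', \et\right)$; let the cofiltered index be $\left(\bN, \mid\right)$, the natural numbers ordered by divisibility; set $\cX_\alpha := U_\alpha$; and for each $n$ put
$$\cY^n := \underset{\alpha}\coprod \left(\radice{n}{X} \times_X U_\alpha\right),$$
with structure map $\cY^n \to \coprod_\alpha U_\alpha = \coprod_\alpha \cX_\alpha$ induced by the projections $\radice{n}{X} \times_X U_\alpha \to U_\alpha$. The transition maps $\cY^m \to \cY^n$ for $n \mid m$ come from the structure maps $\radice{m}{X} \to \radice{n}{X}$ of the root stack pro-system; since forming the fibered product along $U_\alpha \to X$ is functorial, these preserve the coproduct decomposition, so that in the notation of the theorem $\cY^n_\alpha = \radice{n}{X} \times_X U_\alpha$ and the maps $f_\alpha \colon \cY^n_\alpha \to \cX_\alpha$ are the projections. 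Finally take $\Pi := \widehat{\Pi}^{\et}_\i$.

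Next I would verify the three hypotheses of the theorem. First, coproducts are universal in $\sC = \Sh\left(\Aff', \et\right)$ because it is an $\i$-topos, hence locally Cartesian closed, so that pullback functors admit right adjoints and in particular preserve coproducts; this is the remark preceding Proposition \ref{prop:obs}. Second, the functor $\Pi = \widehat{\Pi}^{\et}_\i \colon \sC \to \Profs$ preserves coproducts, since it is the composite
$$\Sh\left(\Aff', \et\right) \xrightarrow{\Sh\left(\left(\blank\right)_{\et}\right)} \Topi \xrightarrow{\Shape} \Pro\left(\Spc\right) \xrightarrow{\widehat{\left(\blank\right)}} \Profs,$$
in which each arrow preserves all colimits: the first by construction, being the unique colimit-preserving extension of $S \mapsto \Sh\left(S_{\et}\right)$; the second because $\Shape$ is a left adjoint; the third because profinite completion is a left adjoint. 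Third, since $\Profs$ admits small cofiltered limits, the universal property of $\Pro\left(\sC\right)$ furnishes a unique cofiltered-limit-preserving extension $\Pi^{\pro} \colon \Pro\left(\sC\right) \to \Profs$ of $\Pi$, and this is precisely the functor used elsewhere in the paper to evaluate $\widehat{\Pi}^{\et}_\i$ on pro-objects such as $\radice{\i}{X}_{pro}$ (compare the proof of Lemma \ref{lem:orproaff}).

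With the hypotheses in place, the theorem yields
$$\Pi^{\pro}\left(\underset{n}\lim\, j\left(\cY^n\right)\right) \simeq \underset{\alpha}\coprod \underset{n}\lim\, \Pi\left(\cY^n_\alpha\right).$$
Translating through the dictionary above, the left-hand side is $\widehat{\Pi}^{\et}_\i\left(\underset{n}\lim\, j\left(\coprod_\alpha\left(\radice{n}{X} \times_X U_\alpha\right)\right)\right)$ and the right-hand side is $\underset{\alpha}\coprod \underset{n}\lim\, \widehat{\Pi}^{\et}_\i\left(\radice{n}{X} \times_X U_\alpha\right)$, which is exactly the claimed equivalence.

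I do not expect any genuine obstacle here: the substantive content is already packaged in the preceding theorem and in the coproduct-preservation of $\widehat{\Pi}^{\et}_\i$, which is why this is stated as a corollary. The only points deserving a line of care are the compatibility of the coproduct splitting of $\cY^n$ with the transition maps of the pro-system (immediate from functoriality of fibered products) and the identification of the abstract extension $\Pi^{\pro}$ with the concretely defined profinite \'etale homotopy type of a pro-object, both of which are routine.
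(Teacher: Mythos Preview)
Your proposal is correct and matches the paper's approach: the paper states this corollary as an immediate special case of the preceding theorem, and you have simply spelled out the instantiation $\sC=\Sh\left(\Aff',\et\right)$, $\Pi=\widehat{\Pi}^{\et}_\i$, $\cX_\alpha=U_\alpha$, $\cY^n=\coprod_\alpha\left(\radice{n}{X}\times_X U_\alpha\right)$ and verified the hypotheses. The only additional content you supply beyond the paper---the explicit check that $\widehat{\Pi}^{\et}_\i$ preserves coproducts and that the transition maps respect the splitting---is routine and correct.
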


\begin{corollary}
Let $X$ be a fine saturated log scheme locally of finite type over $\mathbb{C}.$ Let $\left(U_\alpha \hookrightarrow X_{top}\right)_\alpha$ be a collection of open subsets of the analytification $X_{top}.$ Using the notation from \cite{KNIRS}, let $\sC=\Hshi\left(\TopC\right),$ and $\Pi=\widehat{\Pi}_{\i}.$ Then
$$\widehat{\Pi}_{\i}\left(\underset{n} \lim j\left(\underset{\alpha} \coprod \left(\radice{n}{X}_{top} \times_{X_{top}} U_\alpha\right)\right)\right) \simeq \underset{\alpha} \coprod \underset{n} \lim  \widehat{\Pi}_{\i}\left(\radice{n}{X}_{top} \times_{X_{top}} U_\alpha\right).$$
\end{corollary}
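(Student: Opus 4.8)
The plan is to deduce this from the general Theorem of which Corollary~\ref{cor:fixed2} is the \'etale avatar --- the one abstracting the interaction of $\Pi^{\pro}$ with coproducts --- by following that proof \emph{mutatis mutandis}. All that is needed is to check that the topological setting of \cite{KNIRS} supplies the hypotheses of that Theorem.

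First I would fix the input data. Take $\sC := \Hshi\left(\TopC\right)$: this is a hypercomplete $\i$-topos, in particular locally Cartesian closed, so coproducts are universal in $\sC$ by the Remark following Proposition~\ref{prop:obs}. Take $\cX_\alpha := U_\alpha$, regarded as objects of $\sC$ via the Yoneda embedding of $\TopC$, so that $\underset{\alpha} \coprod \cX_\alpha \simeq \underset{\alpha} \coprod U_\alpha$. For the cofiltered system, index by $\bN$ ordered by divisibility (a codirected poset) and set $\cY^n := \underset{\alpha} \coprod \left(\radice{n}{X}_{top}\times_{X_{top}} U_\alpha\right)$, with structure map to $\underset{\alpha} \coprod U_\alpha$ induced by the projections $\radice{n}{X}_{top}\times_{X_{top}} U_\alpha \to U_\alpha$. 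This is a cofiltered diagram in $\sC/\left(\underset{\alpha} \coprod U_\alpha\right)$ because, by \cite{KNIRS}, the topological realizations $n \mapsto \radice{n}{X}_{top}$ form a pro-object in $\Hshi\left(\TopC\right)/X_{top}$, and both pullback along $U_\alpha \to X_{top}$ and formation of the $\alpha$-indexed coproduct preserve pro-systems. By universality of coproducts (Proposition~\ref{prop:obs}) the splitting $\cY^n \simeq \underset{\alpha} \coprod \cY^n_\alpha$ furnished by the setup of that Theorem coincides with the evident one, namely $\cY^n_\alpha \simeq \radice{n}{X}_{top}\times_{X_{top}} U_\alpha$.

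Next I would check the hypotheses on $\Pi := \widehat{\Pi}_\i$. As constructed in \cite{KNIRS}, $\widehat{\Pi}_\i$ on $\Hshi\left(\TopC\right)$ is a composite of left adjoints --- a shape functor (for which the difference between shape and hypercomplete shape is erased by profinite completion) followed by the profinite completion functor --- hence it preserves small coproducts. Since $\Profs$ admits small cofiltered limits, the universal property of $\Pro\left(\sC\right)$ (Definition~\ref{def:pro}) produces the cofiltered-limit-preserving extension $\Pi^{\pro}:\Pro\left(\sC\right)\to\Profs$, and by construction $\Pi^{\pro}\left(\underset{n} \lim j\left(\cY^n\right)\right)$ is exactly the left-hand side of the asserted equivalence. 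The Theorem now yields
\[
\widehat{\Pi}_\i\left(\underset{n} \lim j\left(\cY^n\right)\right) \simeq \underset{\alpha} \coprod\ \underset{n} \lim\ \widehat{\Pi}_\i\left(\radice{n}{X}_{top}\times_{X_{top}} U_\alpha\right),
\]
which is precisely the statement. There is no real obstacle here: the only points requiring care are bookkeeping, namely reconciling the topological root-stack pro-system and the functor $\widehat{\Pi}_\i$ with the constructions of \cite{KNIRS}, and confirming that the coproduct decomposition of each $\cY^n$ returned by universality of coproducts matches the one appearing in the statement.
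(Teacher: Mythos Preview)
Your proposal is correct and follows precisely the route the paper takes: the paper states this Corollary (together with Corollary~\ref{cor:fixed2}) as an ``immediate special case'' of the general Theorem in the appendix, and your write-up simply spells out the verification of that Theorem's hypotheses in the topological setting of \cite{KNIRS}. There is nothing to add.
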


\begin{remark}
The above corollary fixes a small oversight in the proof of \cite[Theorem 6.4]{KNIRS}. In more detail, the proof there starts by constructing a hypercover $U^\bullet$ of the analytification of the log scheme $X_{an},$ such that for each $n,$ we have
$$U^n=\underset{\alpha} \coprod V_\alpha.$$ In order to justify our reduction of the proof to one such $V_\alpha,$ one needs the above corollary.
\end{remark}




\bibliographystyle{hplain}
\bibliography{profinite}

\end{document}